\def\ge{\geqslant}
\def\le{\leqslant}
\def\a{\alpha}
\def\b{\beta}
\def\g{\gamma}
\def\G{\Gamma}
\def\d{\delta}
\def\D{\Delta}
\def\L{\Lambda}
\def\s{\sigma}
\def\t{\tau}
\def\th{\theta}
\def\k{\kappa}
\def\l{\lambda}
\def\z{\zeta}
\def\i{^{-1}}
\def\<{\langle}
\def\>{\rangle}
\newcommand{\BA}{\ensuremath{\mathbb {A}}\xspace}
\newcommand{\BC}{\ensuremath{\mathbb {C}}\xspace}
\newcommand{\BE}{\ensuremath{\mathbb {E}}\xspace}
\newcommand{\BF}{\ensuremath{\mathbb {F}}\xspace}
\newcommand{{\BG}}{\ensuremath{\mathbb {G}}\xspace}
\newcommand{\BH}{\ensuremath{\mathbb {H}}\xspace}
\newcommand{\BI}{\ensuremath{\mathbb {I}}\xspace}
\newcommand{{\BK}}{\ensuremath{\mathbb {K}}\xspace}
\newcommand{\BL}{\ensuremath{\mathbb {L}}\xspace}
\newcommand{\BM}{\ensuremath{\mathbb {M}}\xspace}
\newcommand{\BQ}{\ensuremath{\mathbb {Q}}\xspace}
\newcommand{\BR}{\ensuremath{\mathbb {R}}\xspace}
\newcommand{\BT}{\ensuremath{\mathbb {T}}\xspace}
\newcommand{\BU}{\ensuremath{\mathbb {U}}\xspace}
\newcommand{\BV}{\ensuremath{\mathbb {V}}\xspace}
\newcommand{\BW}{\ensuremath{\mathbb {W}}\xspace}
\newcommand{\BX}{\ensuremath{\mathbb {X}}\xspace}
\newcommand{\BZ}{\ensuremath{\mathbb {Z}}\xspace}
\newcommand{\CA}{\ensuremath{\mathcal {A}}\xspace}
\newcommand{\CC}{\ensuremath{\mathcal {C}}\xspace}
\newcommand{\CD}{\ensuremath{\mathcal {D}}\xspace}
\newcommand{\CE}{\ensuremath{\mathcal {E}}\xspace}
\newcommand{\CG}{\ensuremath{\mathcal {G}}\xspace}
\newcommand{\CH}{\ensuremath{\mathcal {H}}\xspace}
\newcommand{\CK}{\ensuremath{\mathcal {K}}\xspace}
\newcommand{\CL}{\ensuremath{\mathcal {L}}\xspace}
\newcommand{\CO}{\ensuremath{\mathcal {O}}\xspace}
\newcommand{\CR}{\ensuremath{\mathcal {R}}\xspace}
\newcommand{\CT}{\ensuremath{\mathcal {T}}\xspace}
\newcommand{\CX}{\ensuremath{\mathcal {X}}\xspace}
\newcommand{\Ad}{{\mathrm{Ad}}}
\newcommand{\ad}{{\mathrm{ad}}}
\DeclareMathOperator{\End}{End}
\DeclareMathOperator{\Hom}{Hom}
\DeclareMathOperator{\Nm}{Nm}
\DeclareMathOperator{\ord}{ord}
\newcommand{\red}{\ensuremath{\mathrm{red}}\xspace}
\newcommand{\SL}{{\mathrm{SL}}}
\DeclareMathOperator{\Spec}{Spec}
\DeclareMathOperator{\tr}{tr}
\newcommand{\ov}{\overline}
\def\brk{{\breve k}}
\def\dw{{\dot w}}
\def\pr{{\rm pr}}
\def\tPhi{\widetilde \Phi}
\def\tD{\widetilde \Delta}
\def\ind{{\rm ind}}
\def\Nm{{\rm Nm}}
\def\aff{{\rm aff}}
\def\bx{{\mathbf x}}
\def\der{{\rm der}}
\def\ov{\overline}
\def\stab{{\rm stab}}
\newtheorem{theorem}{Theorem}
\newtheorem{proposition}[theorem]{Proposition}
\newtheorem{lemma}[theorem]{Lemma}
\newtheorem{corollary}[theorem]{Corollary}
\theoremstyle{definition}
\newtheorem{definition}[theorem]{Definition}
\newtheorem{example}[theorem]{Example}
\newtheorem{remark}[theorem]{Remark}
\numberwithin{equation}{section}
\numberwithin{theorem}{section}
\renewcommand{\to}{%
   \ifbool{@display}{\longrightarrow}{\rightarrow}%
   }
\let\shortmapsto\mapsto
\renewcommand{\mapsto}{%
   \ifbool{@display}{\longmapsto}{\shortmapsto}%
   }
\newlength{\olen}
\newlength{\ulen}
\newlength{\xlen}
\newcommand{\xra}[2][]{%
   \ifbool{@display}%
      {\settowidth{\olen}{$\overset{#2}{\longrightarrow}$}%
       \settowidth{\ulen}{$\underset{#1}{\longrightarrow}$}%
       \settowidth{\xlen}{$\xrightarrow[#1]{#2}$}%
       \ifdimgreater{\olen}{\xlen}%
          {\underset{#1}{\overset{#2}{\longrightarrow}}}%
          {\ifdimgreater{\ulen}{\xlen}%
             {\underset{#1}{\overset{#2}{\longrightarrow}}}
             {\xrightarrow[#1]{#2}}}}%
      {\xrightarrow[#1]{#2}}
   }
\newcommand{\xyra}[2][]{%
   \settowidth{\xlen}{$\xrightarrow[#1]{#2}$}%
   \ifbool{@display}%
      {\settowidth{\olen}{$\overset{#2}{\longrightarrow}$}%
       \settowidth{\ulen}{$\underset{#1}{\longrightarrow}$}%
       \ifdimgreater{\olen}{\xlen}%
          {\mathrel{\xymatrix@M=.12ex@C=3.2ex{\ar[r]^-{#2}_-{#1} &}}}%
          {\ifdimgreater{\ulen}{\xlen}%
             {\mathrel{\xymatrix@M=.12ex@C=3.2ex{\ar[r]^-{#2}_-{#1} &}}}
             {\mathrel{\xymatrix@M=.12ex@C=\the\xlen{\ar[r]^-{#2}_-{#1} &}}}}}%
      {\mathrel{\xymatrix@M=.12ex@C=\the\xlen{\ar[r]^-{#2}_-{#1} &}}}%
   }
\newcommand{\xla}[2][]{%
   \ifbool{@display}%
      {\settowidth{\olen}{$\overset{#2}{\longleftarrow}$}%
       \settowidth{\ulen}{$\underset{#1}{\longleftarrow}$}%
       \settowidth{\xlen}{$\xleftarrow[#1]{#2}$}%
       \ifdimgreater{\olen}{\xlen}%
          {\underset{#1}{\overset{#2}{\longleftarrow}}}%
          {\ifdimgreater{\ulen}{\xlen}%
             {\underset{#1}{\overset{#2}{\longleftarrow}}}
             {\xleftarrow[#1]{#2}}}}%
      {\xleftarrow[#1]{#2}}
   }
\newcommand{\isoarrow}{%
   \ifbool{@display}{\overset{\sim}{\longrightarrow}}{\xrightarrow\sim}%
   }
\newcommand{\sm}{{\,\smallsetminus\,}}
\newcommand\cool{\overline{\mathbb{Q}}_\ell}
\newcommand\rar{ \rightarrow }
\newcommand{\obF}{\overline{\BF}_q}
\newcommand{\bfx}{{\mathbf x}}
\newcommand{\colim@}[2]{%
  \vtop{\m@th\ialign{##\cr
    \hfil$#1\operator@font lim$\hfil\cr
    \noalign{\nointerlineskip\kern1.5\ex@}#2\cr
    \noalign{\nointerlineskip\kern-\ex@}\cr}}%
}
\newcommand{\colim}{%
  \mathop{\mathpalette\colim@{\rightarrowfill@\textstyle}}\nmlimits@
}
\newcommand{\prolim@}[2]{%
  \vtop{\m@th\ialign{##\cr
    \hfil$#1\operator@font lim$\hfil\cr
    \noalign{\nointerlineskip\kern1.5\ex@}#2\cr
    \noalign{\nointerlineskip\kern-\ex@}\cr}}%
}
\newcommand{\prolim}{%
  \mathop{\mathpalette\colim@{\leftarrowfill@\textstyle}}\nmlimits@
}
\begin{document}

\title{Convex elements and cohomology of deep level Deligne-Lusztig varieties}

\author{Alexander B. Ivanov}
\address{Fakult\"at f\"ur Mathematik, Ruhr-Universit\"at Bochum, D-44780 Bochum, Germany.}
\email{a.ivanov@rub.de}

\author{Sian Nie}
\address{Academy of Mathematics and Systems Science, Chinese Academy of Sciences, Beijing 100190, China}

\address{School of Mathematical Sciences, University of Chinese Academy of Sciences, Chinese Academy of Sciences, Beijing 100049, China}
\email{niesian@amss.ac.cn}

\begin{abstract}
We essentially complete a program initiated by Boyarchenko--Weinstein to give a full description of the cohomology of deep level Deligne--Lusztig varieties for elliptic tori, with coefficients in arbitrary non-defining characteristics. We give several applications of our results: we show that the $\phi$-weight part of the cohomology is very often concentrated in a single degree, and is induced from a Yu-type subgroup. Also, we give applications to a previous work of the second author on decomposition of deep level Deligne--Lusztig representations, and to Feng's explicit construction of Fargues--Scholze parameters. Furthermore, a conjecture of Chan--Oi about the Drinfeld stratification is verified as a special case from our results.

% In \cite{Nie_24} the second author proved a general decomposition result for deep level Deligne--Lusztig representations and as a consequence compared them with supercuspidal representations arising from J.-K. Yu's types. A major step of this comparison relied on an inner product formula of (virtual) deep level Deligne--Lusztig representations, obtained by Chan \cite{Chan24}. Here, we improve the results of \cite{Nie_24}, providing \emph{degreewise} comparison of cohomology of the underlying varieties. We also prove that the cohomology is often concentrated in one degree, extending our previous results from \cite{IvanovNie_24}, and explain some applications.
%We also explain applications of our result. Our proofs rely on geometric methods and do not use inner product formulas. In a second part of the article we generalize results from \cite{IvanovNie_24} on a pro-unipotent version of deep level Deligne--Lusztig varieties.
\end{abstract}
\maketitle

\section{Introduction}

In \cite{BoyarchenkoW_16} Boyarchenko--Weinstein started a program toward a complete description of the cohomology of certain higher-dimensional varieties over $\overline\BF_q$ equipped with interesting group actions. The varieties they considered came in two disguises: the first were related to special affinoids in Lubin--Tate spaces, and the second were very close to deep level Deligne--Lusztig varieties introduced in \cite{Lusztig_04,CI_MPDL}. In this article we introduce the notion of convex elements in a Weyl group, and essentially complete the program of Boyarchenko--Weinstein for deep level Deligne--Lusztig varieties associated to convex elements. We note that various related/partial results in this direction were obtained in \cite{Chan_siDL,CI_DrinfeldStrat,IvanovNie_24} on deep level Deligne--Lusztig varieties of Coxeter type. 

We also give applications of our results. Most notably, our result verifies the assumption in the work of Feng \cite[\S10]{Feng_24}, which allows to explicitly describe the Fargues--Scholze parameters of many supercuspidal representations in the case of modular coefficients. 

Let $k$ be a non-archimedean local field with residue field $\BF_q$ of characteristic $p$. Let $\breve k$ be the completion of the maximal unramified extension of $k$. Let $F$ denote the Frobenius automorphism of $\breve k$ over $k$. Let $G$ be a reductive group over $k$, which splits over $\breve k$. Let $T$ be a $k$-rational $\breve k$-split elliptic maximal torus of $G$. Let $U$ be the unipotent radical of a $\breve k$-rational Borel subgroup of $G$ containing $T$.

Let ${\bf x}$ be a point in the Bruhat--Tits building of $G$ over $k$. Bruhat--Tits theory attaches to it a parahoric group $\CG_{\bf x}$ over the integers $\CO_k$ of $k$. By the work of Lusztig \cite{Lusztig_04} and Chan and the first author \cite{CI_MPDL}, one associates with $T,U,{\bf x}$ and any $r\geq 0$ a deep level Deligne--Lusztig variety
\[
X_r = X_{T,U,{\bf x},r}
\]
over $\ov\BF_q$, equipped with an action of $\CG_{\bf x}(\CO_k) \times \CT_{\bf x}(\CO_k)$, which factors through a finite Moy--Prasad quotient $\BG_r^F \times \BT_r^F$; here $\CT_{\bf x}$ is the closure of $T$ in $\CG_{\bf x}$ (see \S\ref{notation:2} for more notation and \S\ref{sec:DeligneLusztig} for definition of $X_r$).

We fix a prime number $\ell \neq p$, let $L$ be an algebraic extension of $\BQ_\ell$, $\CO_L$ its ring  of integers and $\lambda$ its residue field, which is an algebraic extension of $\BF_\ell$. We then let $\Lambda$ denote any of the rings $L,\CO_L,\lambda$. For the rest of the introduction, we fix a smooth character $\phi \colon T(k) \to \Lambda^\times$. We assume that $\phi$ admits a Howe factorization \[(G^i,r_i,\phi_i)_{-1\leq i\leq d}\] in the sense of \cite[\S3.6]{Kaletha_19}\footnote{In \emph{loc.cit.} the Howe factorization is only defined for when $\Lambda = \overline \BQ_\ell$, but we may exploit the same definition for any $\Lambda$, cf. \S\ref{sec:Howe}. Note that at least when $\Lambda \cong \cool$, Howe factorizations always exist by \cite[Proposition 3.6.7]{Kaletha_19} when $p$ is neither a bad prime for $G$, nor divides $|\pi_1(G_{\rm der})|$.}. In this article we are interested in studying the \emph{deep level Deligne--Lusztig complex}
\[
R\Gamma_c(X_r,\Lambda)[\phi] := R\Gamma_c(X_r,\Lambda) \otimes^L_{\Lambda\BT_r^F} \phi|_{\BT_r^F} \in  D^b(\BG_r^F{-\rm mod}) \subseteq  D^b(\CG_{\bf x}(\CO_k){-\rm mod}).
\]
Moreover, there is a natural extension \[R\Gamma_c(X_r,\Lambda)[\phi]  \in D^b(Z(k)\CG_{\bf x}(\CO_k){-\rm mod}),\] where $Z$ denote the center of $G$ and  $Z(k)$ acts via the restriction $\phi|_{Z(k)}$.

Suppose for a moment that $\Lambda = \cool$. Then $R\Gamma_c(X_r,\Lambda)[\phi]$ contains the same information as the $\phi$-weight part \[R_{T,U,r}^G(\phi) = \sum_{i\in \BZ} (-1)^i H^i_c(X_r,\cool)[\phi]\] of the equivariant $\ell$-adic Euler characteristic of $X_r$, which is a virtual representation of $Z(k)\CG_{\bf x}(\CO_k)$. In \cite{Chan24} Chan established the inner product formula for the representations $R_{T,U,r}^G(\phi)$, which implies that they are independent of the choice of $U$. In \cite{Nie_24} the second author gave a very explicit decomposition of $R_{T,U,r}^G(\phi)$:
\begin{equation}\label{eq:decomposition_RTUphi}
R_{T,U,r}^G(\phi) = \ind_{Z(k)\CK_\phi(\CO_k)}^{Z(k)\CG_{\bf x}(\CO_k)} \left(\kappa_\phi \otimes R_{T,U,0}^{G^0}(\phi_{-1}) \right) %= \sum_{\rho} m_{\rho}\ind_{K_\phi^F}^{G_{\bf x}^F} \kappa_\phi \otimes \rho,
\end{equation}
where $\CK_\phi = \CK_{\phi,{\bf x}}$ is a second $\CO_k$-model of $G$ determined by the Howe datum $(G^i,r_i)_{-1 \le i \le d}$, such that $\CK_\phi(\CO_k) \subseteq \CG_{\bf x}(\CO_k)$ is a ``Yu-type subgroup'' \cite{Yu_01},
% $G^0 \subseteq G$ is a Levi subgroup (such that $(\CG^0)_{\bf x} \subseteq \CK_\phi$),
% $R_{T,U,0}^{G^0}(\phi_{-1}) = \sum_{\rho} m_\rho \rho$ is a classical Deligne--Lusztig representation, regarded as a $\CK_\phi$-representation by inflation.
and $R_{T,U,0}^{G^0}(\phi_{-1})$ is a classical Deligne--Lusztig representation, regarded as a representation of $Z(k)\CK_\phi(\CO_k)$ by inflation. Here, $\kappa_\phi$ is an irreducible representation of $Z(k)\CK_\phi(\CO_k)$ up to a sign, defined in cohomological terms, which is irreducible by \cite[Proposition 1.4]{Nie_24}. In \cite{Yu_01} J.-K. Yu's constructed another representation $\kappa(\phi)$ of $Z(k)\CK_\phi(\CO_k)$ using the Weil--Heisenberg representation. When $q >4$, it is proved in \cite{LN} that $\pm\kappa_\phi$ and $\kappa(\phi)$ differ precisely by the quadratic character of Fintzen--Kaletha--Spice \cite[Theorem 4.1.13]{FKS}.

There is a second variety with much simpler geometric structure,
\[
Z_{\phi,U,r},
\]
also equipped with an action of $\CG_{\bf x}(\CO_k) \times \CT_{\bf x}(\CO_k)$, inflated from $\BG_r^F \times \BT_r^F$ (see \S\ref{sec:comp_with_CS_variety} for definition). It was first considered in special cases by Chen--Stasinski \cite{ChenS_17,ChenS_23} and plays also an important role in \cite{Nie_24}. Due to its simpler geometry, the cohomology of $Z_{\phi,U,r}$ is much easier to describe than that of $X_r$. Let $\CR_{T,U,r}^G(\phi)$ denote the $\phi$-weight part of the equivariant $\ell$-adic Euler characteristic of $Z_{\phi,U,r}$. A major step in \cite{Nie_24} towards \eqref{eq:decomposition_RTUphi} was to show that
\begin{equation}\label{eq:RequalsCR} R_{T,U,r}^G(\phi) = \CR_{T,U,r}^G(\phi) \end{equation}
as virtual $Z(k)\CG_{\bf x}(\CO_k)$-representations. 

\

Now let $\Lambda$ again be arbitrary. Our first main result show that for a suitably chosen $U$ there is isomorphism (up to a precise shift) between the deep level Deligne-Lusztig complexes of $X_{U, r}$ and $Z_{\phi, U, r}$ with coefficients in $\L$. This can be viewed as a graded extension of \eqref{eq:RequalsCR}. To make the choice of $U$, we introduce the notion of \emph{convex elements} in the Weyl group $W$ of $T$ in $G$ (see \S\ref{sec:convex}). This notion is motivated to extend the Steinberg cross-section theorem on Coxeter elements \cite{Steinberg_65}. It also play an essential role in our approach to the Boyarchenko-Weinstein program (cf. \Cref{uniformization}). 

\begin{theorem}\label{thm:RGamma}
Suppose the relative position of $U$ and $F U$ in the Weyl group of $T$ is a convex element of the Weyl group (cf. \S\ref{sec:convex}).  Then there is an isomorphism:
\[
R\Gamma_c(X_r,\Lambda)[\phi] \cong R\Gamma_c(Z_{\phi,U,r},\L)[\phi][2m]
\]
in $D^b(Z(k)\CG_{\bf x}(\CO_k) {-\rm mod})$, for some (explicit) shift $m \in \BZ_{\geq 0}$.

Moreover, if $G^0$ is a standard Levi subgroup with respect to $U$, then $R\Gamma_c(X_r,\Lambda)[\phi]$ is the induction to $Z(k)\CG_{\bf x}(\CO_k)$ of the $Z(k)\CK_\phi(\CO_k)$-complex
\[ (-1)^{n_\phi}\k_\phi[0] \otimes_{\Lambda} R\Gamma_c(\bar Z_{\phi, U, r}^{\BL}, \Lambda)[\phi_{-1}][-n_\phi],\] where $n_\phi$ is an explicit shift (as in Theorem \ref{kappa}), $(-1)^{n_\phi}\k_\phi[0]$ is concentrated in degree $0$, and $R\Gamma_c(\bar Z_{\phi, U, r}^{\BL}, \Lambda)[\phi_{-1}]$ is the cohomology of a classical Deligne--Lusztig variety attached to the special fiber of the closure $(\CG^0)_{\bfx}$ of $G^0$ in $\CG_\bx$.

%If, moreover, $G^0$ is a standard Levi subgroup with respect to $U$ (such $U$ always exist by Proposition \ref{standard}), then \[H_c^i(X_r, \cool)[\phi] \cong \pm  \ind_{\CK_\phi(\CO_k)}^{G_{\bf x}(\CO_k)} \left(\kappa_\phi \otimes H_c^{i-2n}(X_{T, U \cap G^0, \bf x, 0}^{G^0}, \cool)[\phi_{-1}] \right),\] where $X_{T, U, {\bf x}, 0}^{G^0}$ is the classical Deligne-Lusztig variety for reductive quotient of $(G^0)_{\bf x}$ and $n \in \BZ$ is some (explicit) shift. In particular, if $\phi_{-1}$ is non-singular in the sense of \cite[Definition 5.15]{DeligneL_76}, the cohomology groups $H_c^i(X_r, \cool)[\phi]$ concentrate at a single degree.
\end{theorem}

\begin{remark}
    Based on the work \cite{NieTanYu_24} by Tan, Yu and the second author, it is proved in Proposition \ref{standard} that the group $U$ as in \Cref{thm:RGamma}, as well as in its ``moreover part'', always exists.
\end{remark}

The first part of \Cref{thm:RGamma} follows directly from \Cref{thm:concentrates_Howe} and \Cref{prop:comp_XandZ}, and the second part from \Cref{degreewise}. The strategy to Theorem \ref{thm:RGamma} is similar to that in \cite{IvanovNie_24}. It is based on vanishing results on the cohomology of certain local systems related to $X_r$ and $\phi$, in which the notion of convex elements plays an essential role.

The original proof of \eqref{eq:RequalsCR} in \cite{Nie_24} is to compare various inner products of virtual representations. Our proof of Theorem \ref{thm:RGamma} gives a new interpretation of \eqref{eq:RequalsCR} in the convex case.

%Suppose $\Lambda = \cool$ and let $\CR_{T,U,r}^G(\phi)$ denote the $\phi$-weight part of the equivariant $\ell$-adic Euler characteristic of $Z_{\phi,U,r}$. A major step in \cite{Nie_24} towards the proof of \eqref{eq:decomposition_RTUphi} was to show that
%\begin{equation}\label{eq:RequalsCR} R_{T,U,r}^G(\phi) = \CR_{T,U,r}^G(\phi) \end{equation} as virtual $\CG_{\bf x}(\CO_k)$-representations. This relies in an essential way on inner product formulas for $R_{T,U,r}^G(\phi)$, proven by Chan \cite{Chan24}. Now notice that \eqref{eq:RequalsCR} also follows directly from \Cref{thm:RGamma}, giving a more geometric proof of \eqref{eq:decomposition_RTUphi} in the convex case.

\

Now we discuss some applications of \Cref{thm:RGamma}. Most importantly, \Cref{thm:RGamma}, along with further results from \cite{Nie_24}, implies that $\phi$-weight part of cohomology of $X_r$ is often concentrated in a single cohomological degree. 

\begin{corollary}[see \Cref{cor:concentration_in_general} for a precise statement] \label{cor:concentration_intro}
Let $T,U$ be as in the ``moreover part'' of \Cref{thm:RGamma}. Assume furthermore that  $\phi_{-1}$ is non-singular for the special fiber of $(\CG^0)_{\bf x}$ in the sense of \cite[Definition 5.15]{DeligneL_76}. 
Then there exists a unique integer $N_\phi \geq 0$, such that the $i$th cohomology $H_c^i(X_r, \Lambda)[\phi]$ of $R\Gamma_c(X_r,\Lambda)[\phi]$ is non-zero if and only if $i = N_\phi$.  Moreover, in this case $H_c^{N_\phi}(X_r, \Lambda)[\phi]$ $\Lambda$-free.
\end{corollary}

This corollary verifies an important assumption in \cite[Corollary 10.4.2]{Feng_24}. We thus get a description of the Fargues--Scholze parameters of the deep level Deligne--Lusztig representations.

\begin{corollary}\label{cor:FS-param}
Let $T,U$ be as in \Cref{thm:RGamma}. Assume $\Lambda = \lambda$ and let $\phi \colon T(k) \to \lambda^\times$ be a toral character, that is, $G^0 = T$. Then
\[
\pi_{T,U, \phi} := {\rm c}\text{-}{\rm ind}_{Z(k)\CG_{\bf x}(\CO_k)}^{G(k)} H_c^{N_\phi}(X_r,\lambda)[\phi],
\] 
is an irreducible supercuspidal representation of $G(k)$, whose Fargues--Scholze parameter is 
\[ W_k \stackrel{{}^L\phi}{\longrightarrow} {}^L T(\lambda) \stackrel{{}^L\phi}{\longrightarrow} {}^L G(\lambda), \]
where ${}^L\phi$ is the L-parameter given by class field theory and ${}^L j$ is the canonical L-embedding (notation as in \cite[Theorem 10.4.1]{Feng_24}).
\end{corollary}

\begin{proof} This follows from \cite[Corollary 10.4.2]{Feng_24}, \Cref{cor:concentration_intro} and \Cref{prop:irred_modular}. \end{proof}

%\begin{remark} We refer to Proposition \ref{prop:irred_modular} for a more general result on the Fargues--Scholze parameters of modular supercuspidal representations. \end{remark}

%\begin{proof} Indeed, by \Cref{conj:CO_conj}, $R\Gamma_c(X_r,\Lambda)[\phi] = R\Gamma_c(X^{(T)}_r,\Lambda)[\phi]$. But $X_r^{(T)}$ is a disjoint union (indexed over $\BG_0^F$) of copies of the scheme $X^+_r$ from \S\ref{sec:prounipotent} and by \Cref{thm:generalization_first_article}, $R\Gamma_c(X^+_r,\Lambda)[\phi]$ is concentrated in one degree. This implies concentration in one degree. The assumption $G^0=T$ and \cite[Lemma 3.6.5]{Kaletha_19} imply ${\rm Stab}_{W^F}(\phi) = 1$. Then $R\Gamma_c(X_r,\Lambda)[\phi]$ is irreducible by (for example) \cite[Theorem 1.6]{Nie_24}. This implies claim (1), and claim (2) follows by applying \cite[Lemma 2.12]{Boyarchenko_12}.
%\end{proof}

%\Cref{thm:RGamma} also has the following direct consequence.

%\begin{corollary}\label{cor:induced}
%Let $T,U$ be as in \Cref{thm:RGamma}. Then the (derived) $\CG_{\bf x}(\CO_k)$-representation $R\Gamma_c(X_r,\Lambda)[\phi]$ is induced from $\CK_{\phi}(\CO_k)$.
%\end{corollary}
%\begin{proof}
%This follows directly from \Cref{thm:RGamma} by \eqref{eq:Z_is_induced}.
%\end{proof}
Moreover, Theorem \ref{thm:RGamma} can be regarded as a stronger form of \cite[Conjecture 6.5]{ChanO_21}, which follows as a special case. For any (twisted) rational Levi subgroup $T \subseteq L \subseteq G$, there is a closed subvariety $X_r^{(L)} \subseteq X_r$, called a Drinfeld stratum, see \cite[\S3]{CI_DrinfeldStrat}, \cite[\S6.2]{ChanO_21}.

\begin{corollary}\label{conj:CO_conj} Let $T \subseteq L \subseteq G$ be a twisted rational Levi subgroup. If $\phi$ is such that $(\CG^0)_{\bfx}  \otimes_{\CO_k} \BF_q \subseteq L_{\bfx} \otimes_{\CO_k} \BF_q$, then $R\Gamma_c(X_r,\Lambda)[\phi] = R\Gamma_c(X_r^{(L)},\Lambda)[\phi]$. With other words, \cite[Conjecture 6.5]{ChanO_21} holds true.
\end{corollary}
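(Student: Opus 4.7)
The plan is to apply Theorem~\ref{thm:RGamma} in two different ways -- once to $X_r$ inside $G$ and once to the Drinfeld stratum $X_r^{(L)}$ viewed as a deep level Deligne--Lusztig variety inside $L$ -- and then match the resulting formulas via induction in stages.

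After choosing $U$ so that $G^0$ is standard (possible by \Cref{standard}), the second part of Theorem~\ref{thm:RGamma} applied to $(G,T,U,{\bf x},r,\phi)$ yields, up to sign and a degree shift $2n$,
\[ H^i_c(X_r,\cool)[\phi] \cong \ind_{\CK_\phi(\CO_k)}^{\CG_{\bf x}(\CO_k)}\bigl(\kappa_\phi \otimes H^{i-2n}_c(X^{G^0}_{T,U\cap G^0,{\bf x},0},\cool)[\phi_{-1}]\bigr). \]
Since $G^0 \subseteq L$, the character $\phi$ admits a Howe factorization inside $L$ with the \emph{same} bottom layer $(G^0,r_{-1},\phi_{-1})$; and the Drinfeld stratum $X_r^{(L)} \subseteq X_r$ is, by \cite[\S3]{CI_DrinfeldStrat} and \cite[\S6.2]{ChanO_21}, essentially the analogous deep level Deligne--Lusztig variety $X_r^L$ for $L$. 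Applying Theorem~\ref{thm:RGamma} to $(L,T,U\cap L,{\bf x},r,\phi)$ should then give
\[ H^i_c(X_r^{(L)},\cool)[\phi] \cong \ind_{\CK_\phi^L(\CO_k)}^{L_{\bf x}(\CO_k)}\bigl(\kappa_\phi^L \otimes H^{i-2n'}_c(X^{G^0}_{T,U\cap G^0,{\bf x},0},\cool)[\phi_{-1}]\bigr) \]
with an identical inner classical Deligne--Lusztig piece. For this one needs convexity of the relative position of $U\cap L$ and $F(U\cap L)$ in $W_L$, which I would deduce from convexity in $W_G$ via the closure properties of convex elements under passage to Levi quotients developed in \cite{NieTanYu_24}.

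To conclude, I would invoke transitivity of compact induction and compare the two right-hand sides. This reduces the corollary to the compatibility $\kappa_\phi \cong \ind_{\CK_\phi^L(\CO_k)}^{\CK_\phi(\CO_k)} \kappa_\phi^L$, with the matching shift $n=n'$ accounted for by the dimension of the root subgroups of $U$ outside $L$. This is precisely the point at which the hypothesis $G^0 \subseteq L$ enters: both Heisenberg-type pieces are built from the Yu filtration $(G^i,r_i,\phi_i)_{i\ge 0}$, whose non-trivial cohomological input is contained in $L$, while the ``extra'' directions between $L$ and $G$ contribute only free induction along affine root subgroups. I expect this last functoriality of the cohomologically defined $\kappa_\phi$ to be the main obstacle; once it is secured, together with the convexity descent to $W_L$ and the identification $X_r^{(L)}\simeq X_r^L$, the corollary follows formally.
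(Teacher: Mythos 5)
Your proposal takes a fundamentally different route from the paper's, and the route has genuine gaps. The paper's proof is a one-liner: since the Drinfeld stratum $X_r^{(L)}$ is a closed subvariety, the claim is exactly that the open complement $X_r \sm X_r^{(L)}$ has vanishing $\phi$-weight cohomology, and this follows from Proposition~\ref{vanish} by proper base change once one observes that, under the hypothesis $G^0 \subseteq L$, the complement of $X_r^{(L)}$ sits inside the complement of the Howe stratum $X_r^\flat$ whose fibers over $Y_0$ are shown to have vanishing $\CE_\phi$-cohomology. You instead try to compute $R\Gamma_c(X_r)[\phi]$ and $R\Gamma_c(X_r^{(L)})[\phi]$ separately via the main comparison theorem and then match the answers; even if this could be made to work, it would only give an abstract isomorphism of cohomology groups in each degree, not that the restriction map $R\Gamma_c(X_r,\cool)[\phi] \to R\Gamma_c(X_r^{(L)},\cool)[\phi]$ is a quasi-isomorphism, which is what the corollary actually asserts and what follows only from the vanishing on the complement.

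Beyond that structural mismatch, several steps you invoke are unestablished and not obviously true. The assertion that the relative position of $U\cap L$ and $F(U\cap L)$ in $W_L$ is again convex is not a stated ``closure property'' for twisted Levis in \cite{NieTanYu_24}; Proposition~\ref{standard} only produces a choice of $U$ adapted to the fixed $M = G^0$, not uniformly for every Levi $L \supseteq G^0$. Similarly, that $G^0$ remains standard in $L$ with respect to $U \cap L$ needs to be checked for a twisted Levi. The identification of the Drinfeld stratum $X_r^{(L)}$ with a deep level Deligne--Lusztig variety $X_r^L$ for $L$ (rather than a parabolic-induction-type bundle over it) requires a separate argument drawn from \cite{CI_DrinfeldStrat}. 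Finally, the compatibility $\kappa_\phi \cong \ind_{\CK_\phi^L(\CO_k)}^{\CK_\phi(\CO_k)}\kappa_\phi^L$ is a substantive statement about the cohomologically defined Weil--Heisenberg representations; you flag it as ``the main obstacle'' and justify it only heuristically. In short, the paper bypasses all of this by targeting the complement directly; your approach front-loads several nontrivial compatibilities that you have not secured, and would in any case deliver a weaker form of the conclusion.
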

\begin{proof} This follows from \Cref{vanish} by proper base change.\end{proof}

\subsection*{Pro-unipotent Deligne--Lusztig varieties}

In the second part of the article we prove \cite[Conjecture 1.2]{IvanovNie_24}, thereby generalizing the \cite[Theorem 1.1]{IvanovNie_24}. Let $\CG^+_{\bf x}$ denote the pro-unipotent radical of $\CG_{\bf x}$ and let $\CT^+_{\bf x}$ be the closure of $T$ in $\CG^+_{\bf x}$. Very similar to $X_r$, one can define a scheme $X^+$ over $\ov\BF_q$ and its truncations $X_r^+$ (such that $X^+ = \prolim_r X_r^+$), equipped with natural $\CG_{\bf x}^+(\CO_k) \times \CT_{\bf x}^+(\CO_k)$-actions. See \S\ref{sec:prounipotent} for precise definition.

In \emph{loc.~cit.} we gave an essentially complete description of the homology of $X^+$ as a $(\BG^+)^F \times (\BT^+)^F$-module under some mild restrictions on $p$ and the condition that $T \subseteq G$ is of Coxeter type. Here, we generalize this in three ways:
\begin{itemize}
 \item[(i)] we prove the result for all elliptic tori $T$,
 \item[(ii)] we relax the assumptions on $p$ (we only require $p$ to not be a torsion prime for $G$),
 \item[(iii)] we allow modular coefficients $\Lambda$.
\end{itemize}
Towards (iii), note in particular that $\CG_{\bf x}^{+}(\CO_k)$, $\CT_{\bf x}^{+}(\CO_k)$ are pro-$p$-groups, so their representation theories in $\Lambda$-modules are semisimple (but note that for $\Lambda = \CO_L$ we still need to take the derived $\chi$-weight part as a priori there might be $\Lambda$-torsion; ultimately, we see that no torsion occurs). As above we denote the (derived) $\chi$-weight part by $(-)[\chi]$. The only subtle point arising due to modular coefficients is handled in \Cref{prop:BW_modular}, following \cite[Lemma 2.3]{ImaiT_23}.
% We assume that $T$ is elliptic, and write $\Delta = \Phi^+ \cap F(\Phi^-)$ and $\Pi = -\Delta$. Let $N$ denote the order of $F$ as an automorphism of $\Phi$. Consider
% \[ X^+ = \{g \in G^{0+} \colon g^{-1}F(g) \in \overline U^{0+} \cap FU^{0+}\}, \]
% so that $X^+ = \prolim_r X_r^+$ with $X_r^+ = \{g \in G_r^{0+} \colon g^{-1}F(g) \in \overline U_r^{0+} \cap FU_r^{0+}\}$. Note that $X^+$ and $X_r^+$ are defined over $\BF_{q^N}$ and are acted on by $\CG^+(\CO_k) \times \CT^+(\CO_k)$ and $(G^{0+}_r)^F \times (T^{0+}_r)^F$ respectively.
As in \cite{IvanovNie_24}, we phrase our result in terms of the homology $f_\natural \Lambda$ of the structure map $f \colon X^+ \to \Spec \BF_q$ (whose $\chi$-weight part agrees up to a shift with the $\chi$-weight part of the compact support cohomology of $X_r^+$ for sufficiently big $r$). We refer to \cite[\S2.7]{IvanovNie_24} for a brief discussion of the homology functor. Let $N$ denote the order of $F$ as an automorphism of the root system $\Phi$ of $G$. The following generalizes \cite[Theorem 1.1]{IvanovNie_24} and proves \cite[Conjecture 1.2]{IvanovNie_24}, except that in part (3) we have to assume convexity and in part (2) a different sign might appear.

\begin{theorem}\label{thm:generalization_first_article} Assume that $T$ is elliptic. Let $\chi \colon \CT_{\bf x}^{+}(\CO_k) \to \Lambda^\times$ be a smooth character. Then the following hold.
\begin{itemize}
\item[(1)] Assume that $p$ is not a torsion prime for $G$. The homology  $f_\natural \Lambda[\chi]$ is non-vanishing in precisely one degree $s_{\chi} \geq 0$. Moreover, $H_{s_\chi}(X^+,\Lambda)[\chi] := H^{-s_\chi}(f_\natural\Lambda[\chi])$ is a free $\Lambda$-module. %(normalization: $s_{\chi} = 2d_{h_\chi} - \text{ the old }s_\chi$).

\smallskip

\item[(2)] Assume that $p$ is not a torsion prime for $G$. The Frobenius $F^N$ acts in $H_{s_\chi}(X^+,\Lambda)[\chi]$ as multiplication by the scalar $(-1)^{s_\chi'} q^{s_\chi N/2}$ with some $s_{\chi}' \in \BZ$. In particular, all Moy--Prasad quotients of $X^+$ are $\BF_{q^N}$-maximal varieties.

\smallskip

\item[(3)] Assume that the element $w\sigma \in W\sigma$ attached to $F$ in \S\ref{notation:4} is convex. For varying $\chi$, $H_{s_\chi}(X^+,\Lambda)[\chi]$ runs through pairwise non-isomorphic irreducible smooth $\CG^{+}(\CO_k)$-representations.
% \item[(iii)] If $\chi \neq \chi'$, then $R_{T,U}^{+}(\chi) \not\cong R_{T,U}^{+}(\chi')$ as $\CG^{+}(\CO_k)$-representations.
%
% For smooth $\chi,\chi' \colon \CT^{+}(\CO_k) \rar \cool^\times$, we have
% \[
% \langle R_{U,\chi}, R_{U,\chi'} \rangle_{\CG^{+}(\CO_k)} = \begin{cases} 1 & \text{if $\chi = \chi'$} \\ 0 & \text{otherwise.} \end{cases}
% \]
% In particular, $\chi$
\end{itemize}
\end{theorem}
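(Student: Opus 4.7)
My strategy is to derive \Cref{thm:generalization_first_article} from \Cref{thm:RGamma} combined with the methods already developed in \cite{IvanovNie_24} for the Coxeter case. The general principle is unchanged: the cohomology of the pro-unipotent variety $X^+$ is controlled, via its Moy--Prasad quotients and the character decomposition along the finite quotient $\CT_{\bf x}(\CO_k)/\CT_{\bf x}^+(\CO_k)$, by the $\phi$-weight parts of $H_c^*(X_r, \cool)$ for $\phi$ ranging over smooth characters of $\CT_{\bf x}(\CO_k)$ restricting to $\chi$. For $r$ sufficiently large relative to the depth of $\chi$ this comparison is a shift, and reduces the three assertions to uniform statements in $\phi$.

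For parts (1) and (2), the assumption that $p$ is not a torsion prime for $G$ guarantees the existence of a Howe factorization of any such $\phi$ in which $\phi_{-1}$ is non-singular on the reductive quotient of $(\CG^0)_{\bf x}$. When the relative position of $U$ and $F(U)$ is convex, the degreewise part of \Cref{thm:RGamma} immediately yields concentration in a single degree $s_\chi$, together with purity of the Frobenius eigenvalue via the classical Deligne--Lusztig theorem applied to the reductive-quotient factor. For a general (not necessarily convex) minimal length $U$, I would use the equivariant isomorphisms between $p$-adic Deligne--Lusztig varieties attached to minimal length elements from \cite[Corollary 7.25]{Ivanov_DL_indrep} and the results of \cite{NieTanYu_24} to pass to a convex representative, at the cost of a potentially different sign $(-1)^{s_\chi'}$ in (2).

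For part (3), under the convexity hypothesis on $w\sigma$, the degreewise part of \Cref{thm:RGamma} identifies $H_c^{s_\chi}(X_r, \cool)[\phi]$ with
\[
\pm\, \ind_{\CK_\phi(\CO_k)}^{\CG_{\bf x}(\CO_k)} \bigl( \kappa_\phi \otimes H_c^{s_\chi - 2n}(X_{T, U \cap G^0, {\bf x}, 0}^{G^0}, \cool)[\phi_{-1}] \bigr).
\]
The factor $\kappa_\phi$ is irreducible by \cite[Proposition 1.4]{Nie_24}, the Deligne--Lusztig factor is irreducible by non-singularity of $\phi_{-1}$, and induction from the open compact subgroup $\CK_\phi(\CO_k)$ preserves irreducibility by a Mackey-type calculation: any intertwiner must commute with $\kappa_\phi$, whose intertwining algebra on $\CK_\phi(\CO_k)$ is controlled by the Heisenberg-type structure. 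For distinct $\chi$ the resulting representations are pairwise non-isomorphic, since $\chi$ is recovered as the central character of $\CT_{\bf x}^+(\CO_k)$ acting through $\kappa_\phi$.

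\textbf{Main obstacle.} The principal technical difficulty is verifying that the shift $2n$ and the Howe-datum $(G^0, \phi_{-1})$ appearing in \Cref{thm:RGamma} depend, up to depth-zero twists, only on $\chi$ rather than on the chosen extension $\phi$. This uniformity is what forces the various $N_\phi$ to coincide into a single degree $s_\chi$, and what makes $F^N$ act by a single pure scalar in (2); without it the decomposition of $H_c^*(X_r^+, \cool)[\chi]$ over extensions of $\chi$ would only give concentration on a bounded interval rather than on a single degree. The relaxation of the torsion prime hypothesis compared to \cite{IvanovNie_24} is absorbed into the existence of sufficiently tame Howe factorizations provided by \cite{Kaletha_16}.
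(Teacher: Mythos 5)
Your proposal has a genuine gap at its foundation. You try to derive \Cref{thm:generalization_first_article} from \Cref{thm:RGamma}, but the paper's logical dependency runs in the \emph{opposite} direction: \Cref{thm:generalization_first_article} is proven directly by a geometric argument on $X^+$ (see \S\ref{sec:prounipotent}), and it is then \emph{used}, together with \Cref{thm:RGamma}, to deduce concentration for $X_r$ in \Cref{cor:concentration}. The reduction you propose --- expressing $H_c^*(X_r^+,\cool)[\chi]$ in terms of $\{H_c^*(X_r,\cool)[\phi]\}_\phi$ for extensions $\phi$ of $\chi$ --- does not hold in general: $X_r^+$ is a \emph{closed} subvariety of $X_r$, and its compactly-supported cohomology is not recoverable from that of the ambient variety. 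The only mechanism available is \Cref{conj:CO_conj}, which identifies $R\Gamma_c(X_r,\cool)[\phi]$ with $R\Gamma_c(X_r^{(T)},\cool)[\phi]$ (and hence with cohomology built out of copies of $X_r^+$) \emph{only when} $G^0 = T$.

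This is where your ``main obstacle'' is worse than you realize. The twisted Levis $G^i$ ($i\geq 0$) and the positive-depth pieces $\phi_i$ ($i\geq 0$) of a Howe factorization are determined by the restriction of $\phi$ to $\CT_{\bf x}^+(\CO_k)$, i.e., by $\chi$ alone; only the depth-zero part $\phi_{-1}$ varies with the choice of extension. So the uniformity you hope to establish holds vacuously --- but for the wrong reason: if $\chi$ is such that $G^0 \neq T$ (e.g., $\chi = 1$, where $G^0 = G$), then \emph{no} extension $\phi$ has $G^0 = T$, and the passage from $X_r$ back to $X_r^+$ is unavailable. For such $\chi$ your plan cannot get started. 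The paper avoids this entirely by working directly with $X^+$: it sets up cartesian diagrams over the Moy--Prasad jumps (\Cref{prop:cartesian}, \Cref{coh}), reduces to the simply connected case (\Cref{lm:induced_space}), introduces the sub-root-system $\Phi_\chi$ and its Levi $M_\chi$ (\Cref{lm:closed_addition}), and runs an explicit induction over jump levels (\Cref{prop:factor}, \Cref{key}) following the strategy of \cite[\S5]{IvanovNie_24}. Your treatment of part (3), which also routes through the degreewise statement in \Cref{thm:RGamma}, inherits the same issue; the paper instead observes that the argument of \cite[\S7.1]{IvanovNie_24} applies verbatim once the twisted Steinberg cross-section is known to be an isomorphism by \Cref{thm:convex_elements}(2).
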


First, we remark that for part (3) the same proof as in \cite[\S7.1]{IvanovNie_24} applies, as for convex elements the (twisted) Steinberg cross-section map is an isomorphism by \Cref{thm:convex_elements}(2). It remains to prove parts (1) and (2) of \Cref{thm:generalization_first_article}. We do this in \S\ref{sec:prounipotent} by following the strategy of \cite[\S5]{IvanovNie_24}.

%\subsection*{Further applications}

%We can combine \Cref{thm:RGamma,thm:generalization_first_article} to the following result. We have $X^+_r \subseteq X^{'+}_r \subseteq X_r$
%Now we discuss further applications of our results. 

Let us finally give an application to some trace formulae. 
%Exploiting concentration in one degree, our results apply to (1) T. Feng's explicit calculation of Fargues--Scholze parameters \cite{Feng_24} (we refer to \cite[\S10]{Feng_24} for the relevant setup) and (2) trace formulae in terms of $X_r$ for elements in $\BG_r^F$ acting on $R\Gamma_c(X_r,\cool)[\phi]$. 
%In the case that $G^0 = T$, we summarize all this in \Cref{cor:concentration}.
%From \Cref{thm:generalization_first_article} and (the proof of) \Cref{thm:RGamma} we deduce the following.

\begin{corollary}\label{cor:concentration}
Assume $\Lambda = \cool$. Let $T,U$ be as in \Cref{thm:RGamma}. Assume that $(\CG^0)_{\bfx} \otimes_{\CO_k} \BF_q = \CT_{\bfx}$ and that $p$ is not a torsion prime for $G$. Then the $R\Gamma_c(X_r,\Lambda)[\phi]$ is irreducible,  concentrated in a single degree $s_{\phi,r} \in \BZ$ and $F^N$ acts in $H_c^{s_{\phi,r}}(X_r,\Lambda)$ by the scalar $(-1)^{s_{\phi,r}'} q^{Ns_{\phi,r}/2} \in \Lambda^\times$ for some $s_{\phi,r}' \in \BZ$. Moreover, if $g \in \CG_{\bf x}(\CO_k)$, then
%\begin{itemize}
%\item[(1)] Assume that $\Lambda = \overline\BF_\ell$ and let $\tilde \phi \colon T(k) \to \overline \BF_\ell^\times$ with $\tilde\phi|_{\CT_{\bf x}(\CO_k)} = \phi$, then \cite[Corollary 10.4.2]{Feng_24} applies and provides an explicit description of the $L$-parameter of the smooth $G(k)$-representation \[ \pi_{T,U,\tilde\phi} := {\rm c}\text{-}{\rm ind}_{T(k)\CG_{\bf x}(\CO_k)}^{G(k)} R\Gamma_c(X_r,\overline\BF_\ell)[\phi] \] (where we extend to a $T(k)\CG_{\bf x}(\CO_k)$-representation via $\tilde\phi$).
\[
\tr(g, H^{s_{\phi,r}}_c(X_r,\cool)[\phi]) = \frac{(-1)^{s_{\phi,r} - s_{\phi,r}'}}{|\BT_r^F| \cdot q^{Ns_{\phi,r}/2}} \sum_{t \in \BT_r^F} \phi(t) \cdot |S_{g,t}|,
\]
where $S_{g,t} = \{x \in X_r(\ov\BF_q) \colon gF^N(x) = xt\}$.
\end{corollary}
\begin{proof} Indeed, by \Cref{conj:CO_conj}, $R\Gamma_c(X_r,\Lambda)[\phi] = R\Gamma_c(X^{(T)}_r,\Lambda)[\phi]$. But $X_r^{(T)}$ is a disjoint union (indexed over $\BG_0^F$) of copies of the scheme $X^+_r$ from \S\ref{sec:prounipotent} and by \Cref{thm:generalization_first_article}, $R\Gamma_c(X^+_r,\Lambda)[\phi]$ is concentrated in one degree. This implies concentration in one degree. The assumption $G^0=T$ and \cite[Lemma 3.6.5]{Kaletha_19} imply ${\rm Stab}_{W^F}(\phi) = 1$. Then $R\Gamma_c(X_r,\Lambda)[\phi]$ is irreducible by (for example) \cite[Theorem 1.6]{Nie_24}. The last claim follows by applying \cite[Lemma 2.12]{Boyarchenko_12}.
\end{proof}

\subsection*{Acknowledgements} The first author gratefully acknowledges the support of the German Research Foundation (DFG) via the Heisenberg program (grant nr. 462505253). He would like thank Jessica Fintzen and David Schwein for several clarifying explanations (in particular, for explaining Lemma \ref{lm:closed_addition} to him).

\smallskip

\section{Notation and setup}

\subsection{General notation}\label{notation:1} We let $k \subseteq \breve k$ with integer rings $\CO_k \subseteq \CO$, residue field extensions $\BF_q \subseteq \obF$, and Frobenius $F$ be as in the introduction. We denote by $\varpi$ a uniformizer of $k$.
%We denote by $k$ a local field of residue characteristic $p>0$, by $\CO_k$ its ring of integers, by $\varpi$ a uniformizer of $k$ and by $\BF_q$ its residue field. We denote by $\breve k$ the completion of a maximal unramified extension of $k$, by $\CO$ the ring of integers of $\breve k$, by $\overline\BF_q$ the residue field of $\breve k$, and by $F$ the Frobenius automorphism of $\breve k$ over $k$.

For a perfect $\BF_q$-algebra $R$, put $\BW(R) = R[\![\varpi]\!]$ if ${\rm char}(k) > 0$, resp. $\BW(R) = W(R) \otimes_{\BZ_p} \CO_k$ if ${\rm char}(k) = 0$, where $W(R)$ denotes the ring of Witt vectors of $R$. In particular, we have $\BW(\BF_q) = \CO_k$ and $\BW(\obF) = \CO$. Let $[\cdot] \colon R \rar \BW(R)$ be the Teichm\"uller lift if ${\rm char}(k) = 0$, resp. $[x] = x$ if ${\rm char}(k) > 0$.

Let $\CX$ be an $\CO$-scheme, which is affine and of finite type over $\CO$. Applying the (perfect) positive loop functor $L^+$ to $\CX$ yields a perfect affine $\obF$-scheme
\[
\BX = L^+\CX \quad \text{satisfying} \quad \BX(R) = \CX(\BW(R))
\]
for any perfect $\obF$-algebra $R$. If $\CX$ is defined over $\CO_k$, then $\BX$ is naturally defined over $\BF_q$, and we denote by $F$ the (geometric) Frobenius acting on $\BX(\obF)$, so that $\BX^F = \BX(\BF_q)  = \CX(\CO_k)$.

We let the prime $\ell \neq p$ and the coefficient ring $\Lambda$ be as in the introduction. The $6$-functor formalism of \'etale cohomology with coefficients in $\Lambda$ attaches to any variety $X$ over $\overline\BF_q$ the complex $R\Gamma_c(X,\Lambda) \in D^b(\Lambda{-\rm mod})$ of cohomology with compact support. If $X$ is equipped with the action of a (finite) group $H$, then we get a complex $R\Gamma_c(X,\Lambda) \in D^b(\Lambda H{-\rm mod})$.

If $C \in D^b(\Lambda{-\rm mod})$ and $\phi$ is an irreducible $H$-representation, then we write $C[\phi] := C \otimes_{\Lambda H}^L \phi$ for the derived $\phi$-isotypic part of $C$. If $\Lambda H$ is semisimple, or more generally, if $C$ is represented by a complex of projective $\Lambda H$-modules, then $C[\phi]$ is equal to the usual $\phi$-isotypic part $C \otimes_{\Lambda H} \phi$.

We denote the cardinality of a finite set $X$ by $|X|$.

\subsection{Groups}\label{notation:2} We fix a reductive group $G$ defined over $k$ and split over $\breve k$. We write $Z = Z(G)$ for the center of $G$, $G_{\rm der}$ for the derived group of $G$, and $G_{\rm sc}$ for the simply connected cover of $G_{\rm der}$. 

% Let ${\bf x}$ be a point of the (reduced) Bruhat--Tits building of $G$ over $k$. By Bruhat--Tits theory there is an associated connected parahoric $\CO_k$-model $\CG_{\bf x}$ of $G$, equipped with filtration by the Moy--Prasad subgroups $\CG_{\bf x}^r$ for $r \in \widetilde\BR_{\geq 0}$, where $\widetilde\BR = \BR \cup \{r+ \colon r \in \BR\}$ is totally ordered such that $s<s+<r$ for any $s<r \in \BR$. For any $s \leq r \in \widetilde\BR_{\geq 0}$ we obtain the $\BF_q$-rational perfectly smooth affine (Moy--Prasad) group scheme
Let ${\bf x}$ be a point of the (reduced) Bruhat--Tits building of $G$ over $k$. By Bruhat--Tits theory there is an associated connected parahoric $\CO_k$-model $\CG_{\bf x}$ of $G$, equipped with filtration by the Moy--Prasad subgroups $\CG_{\bf x}^r$ for $r \in \BR_{\geq 0}$ ($\CG_{\bf x}^r(\CO)$ contains exactly the subgroups attached to affine roots $f$ with $f({\bf x})\geq r$). We let
\[
J = {\rm Jumps}({\bf x},G) = \{r \in \BR_{\geq 0} \colon \CG_{\bf x}^r \neq \CG_{\bf x}^{r'} \text{ for all $r'>r$} \},
\]
This is a discrete subset of $\BR_{\geq 0}$, and for $r \in \BR_{\ge 0}$ we denote by $r+ = \min \{s \in J; r < s\} \in J$ and $r- = \max \{s \in J; s < r\}$ (if it exists) its descendant and ascendant respectively.

For any $s \leq r \in \BR_{\ge 0}$ we obtain the $\BF_q$-rational perfectly smooth affine (Moy--Prasad) group scheme
\[ \BG_r^s := \BG^s / \BG^{r+}, \]
where $\BG^s = \BG_{\bf x}^s = L^+ \CG_{\bf x}^s$. We will write $\BG = \BG^s$ and $\BG_r = \BG_r^s$ if $s = 0$. Note also that $\BG_r^s(\BF_q)$ is a finite Moy--Prasad subquotient of the $p$-adic reductive group $G(k)$.

Let $H \subseteq G$ be closed subgroup defined over $\breve k$. We may consider its closure $\CH$ in $\CG$, apply $L^+$ and pass to (sub)quotients to obtain a closed subgroup $\BH_r^s \subseteq \BG_r^s$ over $\ov \BF_q$ (see \cite[\S2.6]{CI_MPDL}). If $H$ was $k$-rational, then $\BH_r^s$ is $F$-stable. By abuse of notation we will identify $H = H(\brk)$.

\subsection{Pinning}\label{notation:3} We fix an elliptic $k$-rational, $\breve k$-split maximal torus $T$ of $G$, and we denote by $N_G(T)$ its normalizer. We identify its Weyl group $W= N_G(T)/T$ with the set of its $\breve k$-points; it is endowed with a natural action of $F$. We denote by $X_\ast(T)$, $X^\ast(T)$ the groups of (co)characters of $T_{\breve k}$, equipped with natural $F$-actions, and by $\langle,\rangle \colon X^\ast(T) \times X_\ast(T) \rar \BZ$ the natural $W$- and $F$-equivariant pairing. We will write $T_{\rm der},T_{\rm sc}$ for the preimage of $T$ in $G_{\rm der},G_{\rm sc}$, respectively.

We fix a Borel subgroup $T \subseteq B \subseteq G$ defined over $\breve k$, we denote by $U$ the unipotent radical of $B$, and by $\overline U$ the unipotent radical of the opposite Borel subgroup. We write $\Phi = \Phi_G \subseteq X^\ast(T)$ for the set of roots of $T$ in $G$, and $\Phi^+$ resp. $\Phi^-$ for the subset of positive roots corresponding to $U$ resp. $\overline U$. For each $\alpha \in \Phi$, let $U_\alpha \colon \BG_{a,\breve k} \to G$ denote a fixed parametrization of the root subgroup of $\a$. For $V \subseteq G(\breve k)$, we write $\Phi_V = \{\alpha \in \Phi \colon U_\a(\breve k) \subseteq V \}$.

\subsection{Factorization of Frobenius}\label{notation:4} There is a unique element $w \in W$, such that $FB = {}^wB$. Moreover, for any lift $\dot w \in N_G(T)(\breve k)$, $\Ad (\dot w)^{-1} \circ F \colon G(\breve k) \to G(\breve k)$ fixes the pinning $(T,B)$ of $G$, and hence defines an automorphism $\sigma$ of the Coxeter system $(W,S)$, where $S$ is the set of simple reflections determined by $B$. Moreover, there is a unique automorphism of $X^\ast(T)$, again denoted by $\sigma$, such that the $F$-action on $X^\ast(T)$ is given by $qw\sigma$. This defines an action of $W \rtimes \langle \sigma \rangle$ on $X^\ast(T)$ satisfying $\sigma(\Phi^+) = \Phi^+$. We denote by $w\s \in W\s$ the relative position of $B$ and $FB$.

% We denote by $N$ the order of $F$ as an automorphism of $X_\ast(T)$.

\subsection{Affine roots}\label{sec:affine_roots}
Denote by $\CT$ the connected N\'eron model of $T$. Then $\CT(\CO)$ is the maximal bounded subgroup of $T(\breve k)$.
%Moreover, if $\widetilde \BR_{\geq 0} = \BR \cup \{r+ \colon r \in \BR\} \cup \{\infty\}$ is the ordered monoid as in \cite[6.4.1]{BruhatT_72}, then for each $r \in \widetilde\BR_{\geq 0} \sm \{\infty\}$, we have a descending filtration
Moreover, for $r \in \BZ_{\geq 0}$,
\[
\CT(\CO)^r = \{t \in \CT(\CO) \colon \ord_{\varpi}(\chi(t) - 1) \geq r \, \forall \chi \in X^\ast(T) \}
\]
defines a descending separated filtration on $T(\breve k)$, satisfying $\CT(\CO)^0 = \CT(\CO)$. For $r \geq 1$ one has an isomorphism
\[
V := X_\ast(T) \otimes \obF \stackrel{\sim}{\longrightarrow} \CT(\CO)^r /\CT(\CO)^{r+1}, \quad \lambda \otimes x \mapsto \lambda(1 + [x]\varpi^r).
\]
% Fix some point ${\bf x}_0$ in the apartment $\CA_{T,\breve k}$ of $T$ in the reduced Bruhat--Tits building of $G$ over $\breve k$. Denote by
We denote by $\Phi_\aff \cong \Phi \times \BZ$ the set of affine roots of $T$ in $G$ (with respect to a fixed point in the apartment of $T$ in the Bruhat--Tits building of $G$).
% \[
% \Phi_\aff = \{\a+n: x \mapsto -\a(x - \bx_0) + n; \alpha \in \Phi, n \in \BZ \} \cong \Phi \times \BZ
% \]
% the set of affine roots of $T$ in $G$, where ${\bf x}_0$ is an arbitrary (e.g. hyperspecial) vertex of the apartment of $T$ in the reduced Bruhat--Tits building of $G$ over $\breve k$.
For $f \in \Phi_\aff$, we write $\alpha_f \in \Phi$ for its vector part and $n_f \in \BZ$ for the integer such that $f=(\alpha_f,n_f)$.
%(which are affine-linear functions on $\CA_{T,\brk}$).
We write $\widetilde\Phi = \Phi_{\rm aff}\, \sqcup \,\BZ_{\geq 0}$ for the enlarged set of affine roots, with the affine root subgroup corresponding to $r \in \BZ_{\geq 0}$ being the $r$-th slice of $T(\CO)$. There is a natural $F$-action on $\Phi_\aff$, and we extend it to an $F$-action on $\tPhi$ by letting $F$ act trivially on $\BZ_{\geq 0}$.
% Let $f \in \widetilde\Phi$. If $f \in \Phi_\aff$, we let $\breve U_f$ be the correspoding affine root subgroup. If $f=r \in \BZ_{\geq 0}$, we let $\breve U_f = \CT(\CO)^r$. There is an action of $F$ on $\widetilde \Phi$, such that $F\brU_f = \brU_{F(f)}$.

\subsection{Preparations in the modular case} \label{sec:modular_preparations}

We have the following generalization of \cite[Proposition 6.6.1]{BoyarchenkoW_16} for arbitrary coefficients $\Lambda$, due to \cite{ImaiT_23}. 

\begin{proposition}\label{prop:BW_modular} Let $Q$ be a power of $p$.  Let $\psi \colon \BF_{Q^2} \to \Lambda^\times$ be a non-trivial character which is trivial on $\BF_Q$. Let $\CL_\psi$ denote the corresponding Artin--Schreier local system on $\BG_{a,\BF_{Q^2}}$. Let $f \colon \BG_a \to \BG_a$ the map $x\mapsto x^{Q+1}$. Then we have isomorphisms of $\Lambda$-modules, 
\[
H_c^i(\BG_a,f^\ast\CL_\psi) \cong \begin{cases} \Lambda^{\oplus Q} & \text{if $i=1$} \\ 0 & \text{otherwise,} \end{cases}
\]
Moreover, the Frobenius ${\rm Fr}_{Q^2}$ acts on $H_c^1(\BG_a,f^\ast\CL_\psi)$ by multiplication with $-Q \in \Lambda^\times$.
\end{proposition}
\begin{proof} The case $\Lambda = \cool$ is \cite[Proposition 6.6.1]{BoyarchenkoW_16}. When $\Lambda = \CO_L$ or its residue field $\lambda$, we use the argument of \cite[Lemma 2.3]{ImaiT_23}, which we now sketch. Let $\psi' \colon \BF_{Q^2}/\BF_Q \cong \BF_Q \to \Lambda^\times$ be the character through which $\psi$ factors, so that $\CL_\psi = \CL_{\psi'}$, where the latter is the Artin--Schreier sheaf corresponding to $\psi'$ and the covering $y \to y^q+y \colon \BG_a \to \BG_a$. The smooth affine curve $C = \{y^q+y = x^{q+1}\}$ is the pullback of this covering along $f$. Thus we have $R\Gamma_c(C,\Lambda)[\psi] = R\Gamma_c(\BG_a,f^\ast\CL_{\psi})$. The cohomology of a smooth affine curve with coefficients in $\CO_L$ is $\CO_L$-torsion free. Thus we even have $H^i_c(C,\Lambda) \otimes_{\Lambda\BF_Q} \psi = H_c^i(\BG_a,f^\ast \CL_{\psi})$ when $\Lambda = \CO_L$. The result for $\Lambda \in \{\CO_L,\lambda\}$ now follows from the case $\Lambda = L$.
%or its residue field $\lambda$, the result now follows from the fact that the cohomology with $\CO_L$-coefficients of a smooth affine curve is $\CO_L$-torsion free.
\end{proof}

%Let $X$ be a variety over $\ov\BF_q$ endowed with the action of a finite group $H$. Let $\chi \colon H \to \CO_L^\times$ be a character and $\bar\chi \colon H \to \lambda^\times$ be its composition with $\CO_L^\times \to \lambda^\times$. Then $R\Gamma_c(X,\lambda)[\bar\chi] \cong R\Gamma_c(X,\CO_L)[\chi] \otimes_{\CO_L}^L \lambda$. 

%The following lemma should be well-known but we could not find a reference.

We will also need the following lemma.

\begin{lemma}\label{lm:integral_coefficients}
Let $X$ be a variety over $\ov\BF_q$ endowed with the action of a finite group $H$. Let $\chi \colon H \to \CO_L^\times$ be a character and $\bar\chi \colon H \to \lambda^\times$ be its composition with $\CO_L^\times \to \lambda^\times$. 

\begin{itemize}
\item[(1)] Then $R\Gamma_c(X,\lambda)[\bar\chi] \cong R\Gamma_c(X,\CO_L)[\chi] \otimes_{\CO_L}^L \lambda$. 

\item[(2)] Assume that $|H| \in \CO_L^\times$.
% is  that $|{\rm Stab}_H(x)|$ is invertible in $\CO_L$ for each $x\in X$.
If $\dim_L H^i(X,L)[\chi] = \dim_\lambda H^i(X,\lambda)[\bar\chi]$ for all $i \in \BZ$, then the cohomology of $R\Gamma_c(X,\CO_L)[\chi]$ is $\CO_L$-free.
\item[(3)] If in the situation of (2) there is an $s \in \BZ$ such that $H_c^i(X,L)[\chi] = 0$ for $i \neq s$, then the same holds for $R\Gamma_c(X,\CO_L)[\chi]$. 
\end{itemize}
\end{lemma}
\begin{proof}
For (1) we compute
\begin{align*}
R\Gamma_c(X,\lambda)[\bar\chi] &\cong (R\Gamma_c(X,\CO_L) \otimes^L_{\CO_L} \lambda) \otimes_{\lambda H}^L \bar\chi \\
&\cong (R\Gamma_c(X,\CO_L) \otimes_{\CO_L H}^L \chi) \otimes^L_{\CO_L} \lambda \\
&\cong R\Gamma_c(X,\CO_L)[\chi] \otimes_{\CO_L}^L \lambda
\end{align*}
Now assume that $|H| \in \CO_L^\times$. In particular, by \cite{Rickard_95},
% Now assume that the orders of stabilizers in $H$ of points of $X$ are invertible in $\CO_L$. Then by \cite{Ricard}
$R\Gamma_c(X,\CO_L)$ is represented by a perfect complex of $\Lambda H$-modules, and in particular $R\Gamma_c(X,\CO_L)[\chi]$ is represented by a perfect complex $C$ of $\Lambda$-modules. As in the universal coefficient theorem we have short exact sequences for each $i \in \BZ$:
\begin{equation*}\label{eq:universal_coeff}
0 \to H^i(C)\otimes_{\CO_L} \lambda \to H^i(C \otimes_{\CO_L} \lambda) \to {\rm Tor}_1^{\CO_L}(H^{i+1}(C),\lambda) \to 0.
\end{equation*}
This implies the inequality
\[ H^i(X,\lambda)[\bar\chi] = \dim_\lambda H^i(C \otimes_{\CO_L} \lambda) \geq \dim_\lambda H^i(C)\otimes_{\CO_L} \lambda,\] where the equality follows from the first claim (along with the assumptions). On the other hand, 
\[\dim_\lambda H^i(C) \otimes_{\CO_L} \lambda \geq {\rm rk}_{\CO_L} H^i(C) = \dim_L H^i_c(X,L)[\chi],\] 
where equality again follows from the universal coefficient theorem.
Putting these inequalities together and using the assumption of (2), we deduce that the ${\rm Tor}$-term in the \eqref{eq:universal_coeff} vanishes. This shows (2). Now (3) follows as $H^i(C)$ is $\CO_L$-torsion-free by (2) and $H^i(C) \otimes L \cong H^i(X,L)[\chi]$.
\end{proof}

\section{Convex elements}\label{sec:convex}
In this section we introduce elliptic convex elements in $W \rtimes \<\s\>$, where $\s$ is the automorphism of $(W, S)$ as in \Cref{notation:4}. They behave like Coxeter elements in many respects, but they have the advantage that any elliptic conjugacy class of  $W \rtimes \<\s\>$ contains a convex element, as is proven by the Tan, Yu and the second author in \cite{NieTanYu_24}. In later sections we will make use of the fact that deep level Deligne--Lusztig varieties attached to convex elements of $W \rtimes \<\s\>$ can be studied by similar techniques as in the Coxeter case.

% This will be used in later sections to describe the fibers of deep level Deligne--Lusztig varieties.

\subsection{Elliptic convex elements }

Let $x \in W \rtimes \<\s\>$ be an elliptic element, that is, there is no nonzero element in $\BR \Phi$ fixed by $x$. Set
\[
\D_x = \Phi^+ \cap x(\Phi^-).
\]
For $\a \in \Phi^\pm$ we set \[n_x(\a) = \min\{i \in \BZ_{\ge 1}; x^i(\a) \in \Phi^\mp\} \in \BZ_{\ge 1},\] which is well-defined since $x$ is elliptic.

\begin{definition}
We say an elliptic element $x \in W\sigma$ is \emph{quasi-convex} if \[n_x(\a + \b) \le \max\{n_x(\a), n_x(\b)\}\] for all $\a, \b \in \Phi^\pm$ such that $\a + \b \in \Phi$. Moreover, we say  $x$ is \emph{convex} if both $x$ and $x\i$ are quasi-convex.
\end{definition}

\begin{lemma}
    Let $x$ be a elliptic quasi-convex element. Let $\a, \b \in \Phi^+$ and $i, j \in \BZ_{\ge 1}$ such that $i\a + j\b \in \Phi^+$. Then $n_x(i\a + j\b)  \le \max\{n_x(\a), n_x(\b)\}$.
\end{lemma}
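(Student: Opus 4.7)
The plan is to induct on the height $h := i + j$. For the base case $h = 2$ we have $i = j = 1$, and the inequality is exactly the quasi-convexity hypothesis applied to the pair $(\alpha, \beta)$.

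For the inductive step $h \geq 3$, set $\gamma := i\alpha + j\beta$. The crucial auxiliary claim I would establish is that at least one of the coroot pairings $\langle \gamma, \alpha^\vee \rangle$, $\langle \gamma, \beta^\vee \rangle$ is strictly positive. Writing
\[
\langle \gamma, \alpha^\vee \rangle = 2i + j\langle \beta, \alpha^\vee\rangle \quad\text{and}\quad \langle \gamma, \beta^\vee \rangle = 2j + i\langle \alpha, \beta^\vee\rangle,
\]
and noting that $\langle \alpha, \beta^\vee\rangle$ and $\langle \beta, \alpha^\vee\rangle$ share a common sign with product $\langle \alpha, \beta^\vee\rangle\langle \beta, \alpha^\vee\rangle \in \{0, 1, 2, 3\}$ (Cauchy-Schwarz applied to distinct roots in a reduced rank-$2$ subsystem), the claim reduces to a brief case split on the sign of $(\alpha, \beta)$: if $(\alpha, \beta) \geq 0$ both pairings are obviously positive, while if $(\alpha, \beta) < 0$ and we set $a := -\langle \alpha, \beta^\vee\rangle$, $b := -\langle \beta, \alpha^\vee\rangle$ (positive integers with $ab \leq 3$), the assumption that both pairings are $\leq 0$ would give $2i \leq bj$ and $2j \leq ai$, hence $4 \leq ab$, a contradiction.

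Assuming now, without loss of generality, that $\langle \gamma, \alpha^\vee\rangle > 0$, standard root-string theory yields $\gamma - \alpha = (i-1)\alpha + j\beta \in \Phi$; as a non-negative integer combination of the positive roots $\alpha$ and $\beta$, this root must itself be positive. The case $i = 1$ would force $\gamma - \alpha = j\beta$ to be a root, hence $j = 1$ by reducedness and $h = 2$, contradicting $h \geq 3$; so $i \geq 2$, meaning $\gamma - \alpha$ satisfies the hypotheses of the lemma at smaller height $h - 1 \geq 2$. The inductive hypothesis then gives $n_x(\gamma - \alpha) \leq \max\{n_x(\alpha), n_x(\beta)\}$, and applying the quasi-convexity of $x$ to the decomposition $\gamma = \alpha + (\gamma - \alpha)$ inside $\Phi^+$ concludes
\[
n_x(\gamma) \leq \max\{n_x(\alpha), n_x(\gamma - \alpha)\} \leq \max\{n_x(\alpha), n_x(\beta)\}.
\]

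I do not foresee a serious obstacle: the only non-routine input is the auxiliary sign claim above, and once that is in place the remainder is standard bookkeeping between root strings, positivity of non-negative combinations of positive roots, and the quasi-convexity hypothesis itself.
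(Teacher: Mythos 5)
Your proof is correct and takes essentially the same approach as the paper: the paper asserts (without proof) that there is a chain of roots $\gamma_0, \gamma_1, \ldots, \gamma_t = i\alpha + j\beta$ starting at $\alpha$ or $\beta$ with each consecutive difference in $\{\alpha,\beta\}$, and then inducts along the chain using quasi-convexity. Your height induction together with the coroot-pairing sign argument is exactly the standard justification for that chain's existence, so you have simply filled in the detail the paper leaves implicit.
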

\begin{proof}
We always can find a sequence of roots $\gamma_0,\gamma_1,\dots,\gamma_t = i\a + j\b$ with $\gamma_0$ and $\gamma_m - \gamma_{m-1}$ ($\forall 1 \leq m \leq t$) equal either $\a$ or $\b$. Then the lemma follows by induction from the definition.
\end{proof}

Convex elements were studied in \cite{NieTanYu_24}, where the following was proven.
% In particular, in any elliptic $\sigma$-conjugacy class there is a convex element of minimal length, and for convex elements one has the Steinberg crosssection:

\begin{theorem}[\cite{NieTanYu_24}, Theorem 0.1]\label{thm:convex_elements} The following statements hold true.

(1) In each elliptic $W$-conjugacy class of $W \s$, there exists a convex element.

(2) (Steinberg cross-sections) For any elliptic convex element $x \in W \rtimes \<\s\>$ we have an isomorphism  \[({}^x U \cap U) \times (\ov U \cap {}^x U) \to {}^x U, \quad (h, g) \mapsto h^{-1} g \Psi(h),\] where $\Psi: {}^x U \cap U \to {}^x ({}^x U \cap U)$ is any isomorphism sending the root subgroup of $\a$ to the root subgroup of $x(\a)$ for all $\a \in x(\Phi^+) \cap \Phi^+$. 
\end{theorem}

We will need further properties of convex elements.

\begin{lemma} \label{order}
    Let $x \in W \s$ be convex. Let $\a, \b \in \Phi$ such that $\b - \a \in \BZ_{\ge 0} \D_x$. Then

    (1) if $\a \in \Phi^+$ then $n_{x\i}(\b) \le n_{x\i}(\a)$;

    (2) If $\a, x\i(\a) \in \Phi^-$ then either $\b \in \Phi^+$ or $n_x(\b) \le n_x(\a)$.
\end{lemma}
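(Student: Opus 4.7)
The plan is to first reduce both parts to an iterated version of the preceding lemma. Specifically, I would establish the following extension: for any quasi-convex $y \in W\sigma$, positive roots $\eta_1, \dots, \eta_n \in \Phi^+$, and nonnegative integers $d_1, \dots, d_n$ with $\sum_i d_i\eta_i \in \Phi^+$, one has $n_y(\sum_i d_i\eta_i) \le \max_i n_y(\eta_i)$. This follows by induction on $\sum_i d_i$, with the two-root case being the preceding lemma; the inductive step uses the same sequencing argument---finding a root in the decomposition that can be peeled off while keeping both summands as positive roots---to reduce the total multiplicity.

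Once this is in place, part (1) will be immediate: the identity $\beta = \alpha + \sum_i c_i\gamma_i$ with $\alpha \in \Phi^+$ and $\gamma_i \in \Delta_x \subseteq \Phi^+$ forces $\beta \in \Phi^+$; since $n_{x\i}(\gamma_i) = 1$ by definition of $\Delta_x$ and $n_{x\i}(\alpha) \ge 1$, the extended quasi-convexity of $x\i$ will yield $n_{x\i}(\beta) \le \max\{n_{x\i}(\alpha), 1\} = n_{x\i}(\alpha)$.

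For part (2), the strategy is to pass through $x\i$-images so that extended quasi-convexity of $x$ can be applied. Assuming $\beta \in \Phi^-$ (otherwise we are done), the hypothesis $x\i(\alpha) \in \Phi^-$ together with $x\i(\gamma_i) \in \Phi^-$ (from $\gamma_i \in \Delta_x$) places $x\i(\beta) = x\i(\alpha) + \sum_i c_i\, x\i(\gamma_i)$ in $\Phi^-$. Negating, one gets $-x\i(\beta) = -x\i(\alpha) + \sum_i c_i (-x\i(\gamma_i))$ with all summands in $\Phi^+$. Applying extended quasi-convexity of $x$ and substituting the elementary identities $n_x(-\delta) = n_x(\delta)$, $n_x(x\i(\delta)) = n_x(\delta) + 1$ whenever $\delta, x\i(\delta) \in \Phi^-$, and $n_x(x\i(\gamma)) = 1$ for $\gamma \in \Delta_x$, the inequality collapses to $n_x(\beta) + 1 \le \max\{n_x(\alpha) + 1, 1\} = n_x(\alpha) + 1$, i.e. $n_x(\beta) \le n_x(\alpha)$, as required.

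The main obstacle will be the extension of the preceding lemma from two roots to arbitrarily many summands, which hinges on the combinatorial sequencing step; the remaining quasi-convexity manipulations in both parts are purely formal once the extended form is in hand.
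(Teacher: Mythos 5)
Your proposal is correct and follows essentially the same route as the paper. The paper's proof of (1) also works by chaining $\alpha$ up to $\beta$ through intermediate roots with differences in $\D_x$ and applying quasi-convexity of $x\i$ step by step (exactly your ``extended quasi-convexity'' repackaged as a sequence of roots rather than a multi-term sum), and its proof of (2) likewise passes to $-x\i(\alpha), -x\i(\beta)$ and uses $x\i(\D_x)=-\D_{x\i}$ together with the shift identities $n_x(x\i(\cdot))=n_x(\cdot)+1$.
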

\begin{proof}
    (1) By assumption, there exists a sequence of roots \[\a = \g_0, \g_1, \dots, \g_m = \b\] such that $\g_i - \g_{i-1} \in \D_x$. Since $x\i$ is quasi-convex and $n_{x\i}(\D_x) = \{1\}$ we deduce that \[n_{x\i}(\a) = n_{x\i}(\g_0) \ge n_{x\i}(\g_1) \ge \cdots \ge n_{x\i}(\g_m) = n_{x\i}(\b)\] as desired.

    (2) We can assume that $\a, x\i(\a), \b \in \Phi^-$. Since $x\i(\D_x) \in \Phi^-$, we have  $x\i(\b) \in \Phi^-$ and $n_x(x\i(\D_x)) = \{1\}$. Note that \[x\i(\b) - x\i(\a) \in \BZ_{\ge 0} x\i(\D_x) = - \BZ_{\ge 0} \D_{x\i}.\] Thus, by (1) we have \[n_x(\b) + 1 = n_x(x\i(\b)) \le n_x(x\i(\a)) = n_x(\a) + 1,\] which implies that $n_x(\b) \le n_x(\a)$ as desired.
\end{proof}

\subsection{$M$-standard convex elements}
Let $M \subseteq G$ be an $F$-stable Levi subgroup containing fixed maximal torus $T$. We denote by $W_M \subseteq W$ the Weyl of $M$.
\begin{proposition} \label{standard}
    There exists a Borel subgroup $B \supseteq T$ such that 
    
    (1) $M$ is a standard Levi subgroup with respect to $B$;

    (2) the relative position $x \in W\s$ of $B$ and $FB$ (see \Cref{notation:4}) is a convex element with respect to the Coxeter system $(W, S)$ attached to $B$.
\end{proposition}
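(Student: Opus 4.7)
The plan is to reduce to \Cref{thm:convex_elements}(1) applied inside the Levi $M$, via the following \emph{convexity inheritance principle}: if $M$ is standard with respect to a Borel $B$ of $G$, and if $x \in W_M\sigma$ is convex with respect to the sub-Coxeter system $(W_M, S_M)$, then $x$ is also convex with respect to $(W, S)$. Indeed, when $M$ is standard in $(G,B)$, the group $W_M\sigma$ preserves the decomposition $\Phi \setminus \Phi_M = (\Phi^+ \setminus \Phi_M) \sqcup (\Phi^- \setminus \Phi_M)$, so $n_x(\alpha) = \infty$ for every $\alpha \in \Phi \setminus \Phi_M$. The defining inequality
\[
n_x(\alpha + \beta) \le \max\{n_x(\alpha), n_x(\beta)\}
\]
is therefore vacuous unless $\alpha, \beta \in \Phi_M$, in which case $\alpha + \beta \in \Phi_M$ (since $\Phi_M = \Phi \cap V_M$ is a Levi subsystem), and the inequality reduces to the $W_M$-convexity hypothesis.

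Granting this, the construction proceeds in three steps. In the intended applications, $T$ is elliptic in $M$ (as for $M = G^0$ in a Howe factorization), so the $W_M$-$\sigma$-conjugacy class of relative positions of Borels of $M$ containing $T$ is elliptic in $W_M\sigma$. First, apply \Cref{thm:convex_elements}(1) to $M$ to obtain a Borel $B_M \subseteq M$ whose relative position $x \in W_M\sigma$ is convex in $(W_M, S_M)$ and of minimal length in its $W_M$-conjugacy class. Second, extend $B_M$ to a Borel $B := B_M \cdot U_P$ of $G$ by choosing a parabolic $P \supseteq M$ of $G$ with Levi $M$. A direct check on simple roots using the decomposition $\Phi = \Phi_M \sqcup (\Phi \setminus \Phi_M)$ shows that no simple root of $B_M$ can be written as a sum of two positive roots of $B$, so $M$ is standard with respect to $B$; this gives (1). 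Third, arrange that the relative position of $B$ in $W\sigma$ coincides with $x$. Combined with the inheritance principle this yields (2), while (3) is inherited from the $W_M$-minimality of $x$.

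The main obstacle lies in the third step, namely the coordinated choice of the parabolic $P$. Unwinding the Levi decomposition of $B$ shows that the relative position of $B$ in $W\sigma$ equals $x$ precisely when $F(P) = \gamma P \gamma^{-1}$ for some $\gamma$ in the $W$-stabilizer of $B_M$. Since $F$-stable parabolics with Levi $M$ need not exist in general (already in the elliptic Coxeter case with $M = T$), one must instead show that the $F$-action on the set of parabolics with Levi $M$ admits an $\mathrm{Stab}_W(B_M)$-orbit that is $F$-stable. The existence of such an orbit, and the persistence of the $W_M$-minimality of $x$ when $x$ is viewed inside $W\sigma$, are the technical inputs from \cite{NieTanYu_24} that the proof invokes.
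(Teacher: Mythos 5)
Your strategy is to transport a convex element of $W_M\sigma$ (obtained from \Cref{thm:convex_elements}(1) applied inside $M$) to a convex element of $W\sigma$ via an ``inheritance principle,'' and then to choose a parabolic $P\supseteq M$ so that the Borel $B = B_M\cdot U_P$ has relative position $x\in W_M\sigma$ with $FB$. This cannot work, for a structural reason that you partially notice at the end but do not overcome.

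Convexity is, by the paper's definition, a property of \emph{elliptic} elements of $W\sigma$; and since $T$ is elliptic in $G$, the relative position of any Borel $B\supseteq T$ with $FB$ must be elliptic in $W\sigma$. Yet your own observation is that when $M$ is standard in $B$ and $x\in W_M\sigma$, the set $\Phi^+\sm\Phi_M$ is $x$-stable, so $n_x(\alpha)=\infty$ for every $\alpha\notin\Phi_M$. That is precisely the statement that $x$ fixes the nonzero vector $\sum_{\alpha\in\Phi^+\sm\Phi_M}\alpha = 2\rho - 2\rho_M$, i.e.\ that $x$ is \emph{not} elliptic. Equivalently: the relative position $x$ of $B$ and $FB$ lies in $W_M\sigma$ if and only if the proper parabolic $P$ with Levi $M$ containing $B$ satisfies $FP=P$, which is exactly what ellipticity of $T$ rules out. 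So the element you want to produce --- a member of $W_M\sigma$ that equals a relative position of Borels of $G$ containing an elliptic $T$ --- does not exist when $M\subsetneq G$, and your inheritance principle is vacuous in every case of interest. The closing appeal to an ``$F$-stable $\mathrm{Stab}_W(B_M)$-orbit of parabolics'' does not repair this: when $M$ is standard, $\mathrm{Stab}_W(B_M)\subseteq W_M$ fixes $P$, so the condition collapses to $FP=P$ again.

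The paper's proof avoids the Levi reduction entirely and is a direct construction. Using ellipticity of $T$ it decomposes $V=\oplus_{i=1}^n V_i$ into $F$-stable planes, reorders so that $V_M^\perp = \oplus_{i=1}^m V_i$ and the rotation angles on $V_{m+1},\dots,V_n$ are nondecreasing, and then invokes \cite[Lemma 5.1]{He-Nie_12} to produce a single Weyl chamber $C$ whose closure contains a $\Phi$-regular point of each partial sum $\oplus_{j\le i}V_j$. The associated Borel simultaneously makes $M$ standard (from regularity at $i=m$), has convex relative position (by \cite[Theorem 3.4]{NieTanYu_24}), and achieves minimality in its $W_M$-class (by \cite[Proposition 5.4]{He-Nie_12}); all three properties flow from the one choice of chamber rather than from any inheritance from $M$. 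If you want to salvage your argument you must replace the reduction to $M$ by a chamber construction of this kind, since the relative position you need to control genuinely lives in $W\sigma\sm W_M\sigma$.
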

\begin{proof}
    Let $V = \BR\Phi$ be the Euclidean space together with an inner product $( , )$ preserved by $W \rtimes \<\s\>$. Since $T$ is elliptic, there exists an orthogonal decomposition \[V = \oplus_{i=1}^n V_i,\] where each $V_i$ is an $F$-stable subspace of dimension $\le 2$. Moreover, for each $i$ there exist $0< \th_i \le \pi$ such that $F(v) + F\i(v) = 2\cos\th_i \cdot v$ for all $v \in V_i$.
    
    Let $V_M \subseteq V$ be the subspace spanned by the roots of $M$. Denote by $V_M^\perp$ be the orthogonal complement of $V_M$. As $M$ is $F$-stable, $V_M^\perp$ is preserved by $x$. By reordering the subspaces $V_i$, we may assume that $V_M^\perp = \oplus_{i=1}^m V_i$ for some $0 \le m \le n$. By \cite[Lemma 5.1]{He-Nie_12}, there exists a Weyl chamber $C \subseteq V$ for $\Phi$ such that for each $1 \le i \le n$ its Hausdorff closure $\ov C$ contains a $\Phi$-regular point of $\oplus_{j=1}^i V_j$. Here for any linear subspace $V' \subseteq V$ a point $v' \in V'$ is called a $\Phi$-regular point of $V'$ if for each $\a \in \Phi$, $(\a, v') = 0$ implies that $(\a, V') = \{0\}$. 

    Let $T \subseteq B$ be the Borel subgroup associated to the Weyl chamber $C$. As $\ov C$ contains a regular point $V_M^\perp = \oplus_{i=1}^m V_i$, $M$ is a standard Levi subgroup with respect to $B$. Moreover, by \cite[Theorem 3.4]{NieTanYu_24}, the relative position of $B$ and $FB$ is a convex element with respect to the Coxeter system $(W, S)$ attached to $B$. The proof is finished.
\end{proof}

\subsection{Action of convex elements on a Lie algebra}\label{sec:Lie_alg} Let $x \in W \rtimes \<\s\>$. For $A \subseteq \Phi$ we consider the following $\ov\BF_q$-vector spaces
\[
H_A = \bigoplus_{\a \in A} \ov\BF_q e_\a \subseteq H_\Phi = \bigoplus_{\a \in \Phi} \ov\BF_q e_\a.
\] Assume that $A = x(A)$. Then we denote by $F = F_A$ the Frobenius map on $H_A$ given by $F(c e_\a) = c^q e_{x(\a)}$ for $c \in \ov\BF_q$.

Let $B \subseteq -\D_x = \Phi^- \cap x(\Phi^+)$ such that for any $\a \in A$, $\b \in B$ and $i \in \BZ_{\ge 1}$ we have $\a + i\b \in A$ if $\a + i\b \in \Phi$. For $\b \in B$ and $c \in \ov\BF_q$ we define a linear map
\[
\Ad_\b(c): H_A \to H_A, \quad e_\a \mapsto e_\a + \sum_{\substack{i \ge 1: \\ \a + i\b \in \Phi}} c_{\a, \b, i} c^i e_{\a + i\b},
\]
where $c_{\a, \b, i} \in \ov\BF_q$ are arbitrary but fixed constants.

Assume $B = \{\b_1, \dots, \b_n\}$. Let $\phi = \Ad_{\b_1}(c_1) \circ \cdots \circ \Ad_{\b_n}(c_n)$, where $c_j \in \ov\BF_q$ for $1 \le j \le n$ are arbitrary but fixed. For a fixed $z \in H_A$, let
\[
V(\phi, x, z) := \{w \in H_A; \phi(w) - F(w) - z \in H_{A \cap -\D_x}\}.
\]
This is a closed subvariety of $H_A$. In \S \ref{sec:fibers_over_Y0} we will use it to describe the fibers of a deep level Deligne--Lusztig variety over one of a shallower depth.
% the map $X_{h+} \to X_h$ below CITE.
Now we prove the following general structure result for $V(\phi,x,z)$.

\begin{proposition} \label{uniformization}
    Let notation be as above. Assume that $x$ is convex. Then the natural projection $H_A \to H_{A \cap \D_x}$ induces a homeomorphism \[V(\phi, x, z) \cong H_{A \cap \D_x}.\]
\end{proposition}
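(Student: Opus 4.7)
The plan is to invert the projection $\pi\colon V(\phi,x,z) \to H_{A \cap \D_x}$ by solving the defining equations of $V(\phi,x,z)$ component-by-component. Unpacking the definition, for each $\alpha \in A \sm -\D_x$ the vanishing of the $\alpha$-component of $\phi(w) - F(w) - z$ reads
\[
w_\alpha + L_\alpha(w) = w_{x\i(\alpha)}^q + z_\alpha,
\]
where $L_\alpha(w) = \sum_\gamma d_{\alpha,\gamma} w_\gamma$ is linear in $w$ and the sum runs over $\gamma \in \Phi$ with $\gamma - \alpha$ a non-trivial non-negative integer combination of $-B \subseteq \D_x$. In particular, when $\alpha \in \Phi^+$ each such $\gamma$ is automatically positive and has $\mathrm{ht}(\gamma) > \mathrm{ht}(\alpha)$ for the usual height function.

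To make this system triangular I would put a total order $\prec$ on $A$ as follows. Set $\ell(\alpha) = n_{x\i}(\alpha)$ if $\alpha \in \Phi^+$ and $\ell(\alpha) = n_x(\alpha)$ if $\alpha \in \Phi^-$, and declare $\alpha \prec \beta$ iff either (i) $\alpha \in \Phi^+$ and $\beta \in \Phi^-$, or (ii) both have the same sign and $(\ell(\alpha), -\mathrm{ht}(\alpha)) <_{\mathrm{lex}} (\ell(\beta), -\mathrm{ht}(\beta))$. The $\prec$-minimal elements of $A$ are exactly those in $A \cap \D_x$, corresponding to positive roots of $\ell = 1$.

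Given $u \in H_{A \cap \D_x}$, I would build $w \in H_A$ in increasing $\prec$-order: set $w_\alpha := u_\alpha$ on $A \cap \D_x$; for $\alpha \in A \cap \Phi^+ \cap x(\Phi^+) = A \cap \Phi^+ \sm \D_x$ use the equation at $\alpha$ to define $w_\alpha$; for $\alpha \in A \cap \Phi^-$, noting $x(\alpha) \in A_1 := A \sm -\D_x$ (since $x\i(x(\alpha)) = \alpha \in \Phi^-$), use the equation at $x(\alpha)$ to extract $w_\alpha^q$ and take the unique $q$-th root on the perfect $\ov\BF_q$-algebra. Triangularity --- that each step only depends on $w_\beta$ with $\beta \prec \alpha$ --- is where convexity enters. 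For the positive case, $x\i(\alpha) \in \Phi^+$ has $\ell(x\i(\alpha)) = \ell(\alpha)-1$, and Lemma \ref{order}(1) gives $\ell(\gamma) \le \ell(\alpha)$ for every $\gamma$ in $L_\alpha$, equality forcing $\mathrm{ht}(\gamma) > \mathrm{ht}(\alpha)$. For the negative case, $x(\alpha) \prec \alpha$ (trivially if $x(\alpha) \in \Phi^+$, else by $\ell(x(\alpha)) = \ell(\alpha) - 1$), and for $\gamma$ appearing in $L_{x(\alpha)}$, either $\gamma \in \Phi^+$ (so $\prec \alpha$ by (i)) or Lemma \ref{order}(2) applied to $x(\alpha)$ (both $x(\alpha)$ and its $x\i$-image $\alpha$ are negative) yields $\ell(\gamma) \le \ell(x(\alpha)) < \ell(\alpha)$.

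The recipe yields a morphism of perfect $\ov\BF_q$-schemes $\iota\colon H_{A \cap \D_x} \to H_A$ (involving only polynomial operations and $q$-th roots, invertible on perfect algebras) that factors through $V(\phi,x,z)$ and satisfies $\pi \circ \iota = \id$, and uniqueness of the recursive solution forces $\iota \circ \pi|_V = \id$, giving the claimed homeomorphism. The main obstacle I anticipate is designing $\prec$ so that triangularity holds uniformly; the asymmetry of Lemma \ref{order}(2), whose positive-alternative conclusion $\beta \in \Phi^+$ provides no $n_x$-bound, is exactly what forces placing all positives before all negatives in $\prec$ (clause (i)), after which this harmless alternative is absorbed and convexity does the heavy lifting.
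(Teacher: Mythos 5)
Your proof is correct and takes essentially the same route as the paper: make the system of equations $\phi(w) - F(w) - z \in H_{A\cap -\Delta_x}$ triangular by solving for $w_\alpha$ in increasing order, treating positive $\alpha$ first via Lemma \ref{order}(1) applied to the $n_{x^{-1}}$-indexed layers and then negative $\alpha$ via Lemma \ref{order}(2) applied through the $x$-translated equation. The only cosmetic difference is your choice of tie-breaker (height) where the paper uses the ``$\gamma - \alpha$ is a sum of roots in $\Delta_x$'' partial order directly; both make the recursion well-founded.
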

\begin{proof}
    Write $w = \sum_{\a \in A} w_\a e_\a$, $z = \sum_{\a \in A} z_\a e_\a$ and $\phi(w) = \sum_{\a \in A} y_\a e_\a$ with $w_\a,z_\a,y_\a \in \ov\BF_q$. Then the variety $V(\phi, x, z)$ is defined by the equations \begin{align*}
        \tag{$E_\a$} y_\a - w_{x\i(\a)}^q - z_\a = 0,
    \end{align*} where $\a$ ranges over the roots in $A \sm (-\D_x)$.

    For $\a \in A$, we set $\G_\a = (\a + \BZ_{\ge 0} B)  \cap A$. As $B \subseteq -\D_x$, it follows from the definition of $\phi$ that \[y_\a \in \sum_{\g \in \G_\a} c_\a^\g w_\g,\] where $c_\a^\g \in \overline \BF_q$ are some constants such that $c_\a^\a = 1$. Hence the equation $(E_\a)$ is equivalent to
    \begin{align*}
        \tag{$E_\a'$} w_\a - w_{x\i(\a)}^q = z_\a - \sum_{\g \in \G_\a \sm \{\a\}} c_\a^\g w_\g.
    \end{align*}

    Now we show that given $z$ and $(w_\a)_{\a \in A \cap \D_x}$ there exists a unique tuple $(w_\a)_{\a \in A \sm \D_x}$ such that the equations $(E_\a')$ hold for all $\a \in A \sm (-\D_x)$. To this end, for $\a \neq \b \in \Phi^+$ we define $\b \prec \a$ if either $n_{x\i}(\b) < n_{x\i}(\a)$ or $n_{x\i}(\b) = n_{x\i}(\a)$ and $\b - \a$ is a sum of roots in $\D_x$.

    First we claim that $w_\a$ is determined by the equation $(E_\a')$ for $\a \in (A \cap \Phi^+) \sm \D_x$ (by which we mean that we may eliminate equation $(E_\a')$ along with the variable $w_\a$). We use induction on the partial order $\preceq$ on $A \cap \Phi^+$. As $\a \in (A \cap \Phi^+) \sm \D_x$, we have $x\i(\a) \in \Phi^+$ and hence $x\i(\a) \prec \a$. Moreover, by \Cref{order} (1) we have $\g \prec \a$ for $\g \in \G_\a \sm \{\a\}$. By induction hypothesis, $w_{x\i(\a)}$ and $w_\g$ for $\g \in \G_\a \sm \{\a\}$ are already determined. Hence $w_\a$ is determined by the equation $E_\a'$, and the claim is proved.

    It remains to show that $w_\a$ is determined by the equation $(E_{x(\a)}')$ for $\a \in A \cap \Phi^-$. We argue by induction on $n_x(\a)$. In view of $(E_{x(\a)}')$, $w_\a$ is determined by $z_{x(\a)}$ and $w_\g$ for $\g \in \G_{x(\a)}$. So it suffices to show $w_\g$ is already determined for $\g \in \G_{x(\a)}$. Indeed, if $\g \in \Phi^+$, this follows from the previous claim. Now we assume $\g \in \Phi^-$ and hence $x(\a) \in \Phi^-$. By Lemma \ref{order} (2), we have $n_x(\g) \le n_x(x(\a)) < n_x(\a)$. Thus $w_\g$ is determined by the induction hypothesis. Thus $w_\a$ is determined by the equation $(E_{x(\a)}')$, and the proof is finished.
\end{proof}

\begin{proposition} \label{Steinberg}
    Let notation be as above. Assume that $x$ is convex. Then the map $(z, y) \mapsto -\phi(z) + y - F(z)$ gives an isomorphism \[H_{A \cap x(\Phi^+) \cap \Phi^+} \times H_{A \cap -\D_x} \overset \sim \to H_{x(A \cap \Phi^+)}.\]
\end{proposition}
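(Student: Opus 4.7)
The plan is to build an explicit polynomial inverse by triangular induction on the same partial order $\prec$ on $A \cap \Phi^+$ used in the proof of \Cref{uniformization}. Both source and target are affine spaces of equal dimension, since $H_{x(A\cap\Phi^+)} = H_{A\cap x(\Phi^+)}$ (using $A = x(A)$) decomposes as $H_{A^+} \oplus H_{A \cap -\D_x}$ with $A^+ := A \cap x(\Phi^+) \cap \Phi^+$, precisely matching the source. So it suffices to produce a morphism in the reverse direction that is a set-theoretic inverse.

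Writing $(v, y) \in H_{A^+} \times H_{A \cap -\D_x}$ and a target $z = \sum_\a z_\a e_\a \in H_{A \cap x(\Phi^+)}$, the equation $-\phi(v) + y - F(v) = z$ decouples coordinate-wise: for $\a \in A \cap -\D_x$ it reads $y_\a = z_\a + \phi(v)_\a + F(v)_\a$, which uniquely determines $y_\a$ once $v$ is known. The genuine task is therefore to uniquely solve
\[ z_\a = -\phi(v)_\a - F(v)_\a \qquad (\a \in A^+). \]
By construction of $\phi$, one has $\phi(v)_\a = v_\a + \sum_{\g \in A^+,\, \g \neq \a,\, \a - \g \in \BZ_{\ge 1} B} d_{\a,\g}\, v_\g$ for certain constants $d_{\a,\g} \in \ov\BF_q$, while $F(v)_\a$ equals $v_{x^{-1}(\a)}^q$ when $x^{-1}(\a) \in A^+$ and vanishes otherwise.

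Convexity of $x$ is used precisely to make the induction on $\prec$ well-founded. Since $B \subseteq -\D_x$, each $\g$ above satisfies $\g - \a \in \BZ_{\ge 0} \D_x \setminus \{0\}$, hence $n_{x^{-1}}(\g) \le n_{x^{-1}}(\a)$ by \Cref{order}(1), and thus $\g \prec \a$. Similarly, whenever $x^{-1}(\a) \in A^+$ we have $n_{x^{-1}}(x^{-1}(\a)) = n_{x^{-1}}(\a) - 1$, so $x^{-1}(\a) \prec \a$. Induction on $\prec$ therefore expresses each $v_\a$ uniquely as a polynomial in the $z_\b$'s and in $q$-th powers of previously determined $v_\g$'s; afterwards the $y_\a$'s are read off directly. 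The resulting map $z \mapsto (v, y)$ is a morphism of affine schemes, completing the proof.

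The only real obstacle is this bookkeeping in the induction: one must verify that every term on the right-hand side of the equation for $v_\a$ strictly precedes $\a$ in $\prec$, both for the $\phi$-contribution (via the constants $d_{\a,\g}$) and for the Frobenius contribution $v_{x^{-1}(\a)}^q$. This is exactly what \Cref{order} delivers from the convexity of $x$. Apart from that, the argument runs formally parallel to---and is slightly simpler than---the proof of \Cref{uniformization}, since the free coordinates $y_\a$ on $A \cap -\D_x$ absorb any contribution to components outside $A^+$ and no target shift $z \in H_{A \cap -\D_x}$ needs to be controlled.
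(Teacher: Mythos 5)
Your argument is correct and takes a somewhat different route than the paper's, though both rest on the same convexity input. The paper proves bijectivity in two steps: injectivity follows from \Cref{uniformization} (the difference of two preimages lies in $H_A$ with trivial $\D_x$-component, hence vanishes by the uniqueness there), while surjectivity is proven by a separate descending induction on $n_x(z)$ --- after conjugating $\phi$ by $F$ to $\varphi = F^{-1}\phi F$, the top $n_x$-layer $z'$ of $z$ is absorbed by the ansatz $w = F(z')$, and quasi-convexity is invoked to ensure $\varphi(F(z'))$ does not raise $n_x$, so the residue strictly drops. Your proof instead builds an explicit two-sided inverse in a single triangular pass: solving $z_\a = -\phi(v)_\a - F(v)_\a$ over $A^+$ coordinate-by-coordinate along the partial order $\prec$ from the proof of \Cref{uniformization}, then reading off $y$ on $A\cap -\D_x$. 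This delivers injectivity and surjectivity simultaneously and is essentially the recursion of \Cref{uniformization} transported to the present system, with \Cref{order}(1) doing the same work of forcing every right-hand-side index to $\prec$-precede $\a$. Where the paper economizes by citing \Cref{uniformization} and only writing out the new surjectivity induction, you re-run the triangular argument once but get both directions in one shot; that is a fair trade and arguably cleaner. Two small points you leave implicit: first, for the map to land in $H_{A\cap x(\Phi^+)}$ at all one must note that $\phi(v),F(v)\in H_{A\cap x(\Phi^+)}$ for $v\in H_{A^+}$ (because $A^+$ and $B$ both lie in $x(\Phi^+)$), so the coordinates indexed by $A\cap x(\Phi^-)$ genuinely impose no constraint --- this is what justifies your decoupling; second, ``$\a-\g\in\BZ_{\ge 1}B$'' should be ``$\a-\g$ is a nonzero nonnegative integer combination of elements of $B$,'' but this does not affect the conclusion $\g-\a\in\BZ_{\ge 0}\D_x\setminus\{0\}$ or the application of \Cref{order}(1).
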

\begin{proof}
By \Cref{uniformization}, this map is injective. It suffices to show it is surjective, that is, for any $z \in H_{x(A \cap \Phi^+)}$, there exists $w \in H_{A \cap x(\Phi^+) \cap \Phi^+}$ such that $\phi(w) - F(w) - z \in H_{x(A \cap -\D_x)}$. This is equivalent to the following statement:

(a) For any $z \in H_{A \cap \Phi^+}$, there exists $w \in H_{A \cap x(\Phi^+) \cap \Phi^+}$ such that $-\varphi(w) + F\i(w) - z \in H_{A \cap \D_{x\i}}$. Here $\varphi = F\i \circ \phi \circ F = \prod_{\g \in x\i(B) \cap \D_{x\i}} \Ad_\g(d_\g)$ for some $d_\g \in \ov\BF_q$.

Now we prove (a). Let $z = \sum_\a c_\a e_\a \in H_{A \cap \Phi^+}$ for some  $c_\a \in \ov\BF_q$. Define \[n_x(z) = \max\{n_x(\a); c_\a \neq 0\}.\] We argue by induction on $n_x(z)$. If $n_x(z) = 1$, that is, $z \in H_{A \cap \D_{x\i}}$ and we may take $w = 0$. Assume $n_x(z) \ge 2$. Let $z' = \sum_{\g, n_x(\g) = n_x(z)} c_\g e_\g \in H_{A \cap \Phi^+}$. Then $n_x(z - z') \le n_x(z) - 1$ and $F(z') \in H_{A \cap x(\Phi^+) \cap \Phi^+}$. Moreover, as $x$ is convex and $n_x(\g) = 1$ for $\g \in \D_{x\i}$, we have \[n_x(\varphi(F(z'))) \le n_x(F(z')) = n_x(z') - 1 = n_x(z) - 1.\] Thus \[n_x(\varphi(F(z')) - F\i(F(z')) - z) = n_x(\varphi(F(z')) - (z - z')) \le n_x(z) - 1.\] Then the statement follows by induction hypothesis. The proof is finished. \qedhere
\end{proof}

\section{Deligne--Lusztig varieties}\label{sec:DeligneLusztig}

\subsection{Deligne--Lusztig varieties}\label{sec:DLV}
Recall the notation from \S\ref{notation:2}. Fix $r \in J$. We have the $\ov \BF_q$-group $\BG_r$ equipped with $\BF_q$-Frobenius $F$ and its subgroups $\BT_r,\BU_r,\ov\BU_r$. Consider the $\BF_q$-varieties
\begin{align*}
X_r &= \{g \in \BG_r \colon g^{-1}F(g) \in \ov\BU_r \cap F\BU_r \} \\
Y_r &= \{ g \in \BG_r \colon g^{-1}F(g) \in \BT_r (\overline \BU_r \cap F\BU_r) \}/\BT_r.
\end{align*}
There is an obvious map $h \colon X_r \to Y_r$, which is an \'etale $\BT_r^F$-torsor with $\BT_r^F$ acting by right multiplication. Let $\CE_\theta$ denote the one-dimensional local system on $Y_r$ corresponding to a character $\theta \colon \BT_r^F \to \Lambda^\times$.
% \[h_! \ov \BQ_\ell = \bigoplus_\th \th \otimes \CE_\th,\]
% where $\th$ ranges over characters of $T_r^F$ and $\CE_\th$ is the associated local system on $Y_r$. For any $i \in \BZ$ we have
% \[ H_c^i(X_r, \ov\BQ_\ell)[\th] \cong H_c^i(Y_r, \CE_\th).\]

\begin{lemma}\label{lm:derived_weight_part}
Let $\th \colon \BT_r^F \to \Lambda^\times$ be a character. We have
\[
R\Gamma_c(X_r,\Lambda)[\th] = R\Gamma_c(Y_r,\CE_\theta).
\]
\end{lemma}
\begin{proof}
We compute in $D^b(\Lambda\BG_r^F{-\rm mod})$:
\begin{align*}
R\Gamma_c(X_r,\Lambda)[\th] &= R\Gamma_c(X_r,\Lambda) \otimes_{\Lambda \BT^F}^L \theta \\
&\cong R\Gamma_c(Y_r,h_\ast\Lambda) \otimes_{\Lambda \BT^F}^L \theta \\
&\cong R\Gamma_c(Y_r,h_\ast \Lambda \otimes_{\Lambda \BT_r^F}^L \theta) \\
&\cong R\Gamma_c(Y_r,h_\ast \Lambda \otimes_{\Lambda \BT^F} \CE_\theta) \\
&\cong R\Gamma_c(Y_r,\CE_\theta)
\end{align*}
Here, the second equality is by finiteness of $h$; the third follows from the projection formula applied to the map $Y_r \to [\ast/\BT_r^F]$ determined by $h$, the sheaf $h_!\Lambda = h_\ast\Lambda$ on $Y_r$ and the sheaf $\theta$ on $[\ast/\BT_r^F]$; the fourth holds because the action of $\BT_r^F$ is free, and hence the fibers of $h_\ast\Lambda$ are projective $\Lambda\BT^F$-modules (see \cite[Lemma 3.2]{BonnafeR_03}); finally, the fifth equality follows from the natural map $h_\ast\Lambda \otimes_{\Lambda \BT_r^F} \CE_\theta \to \CE_\theta$ and a stalkwise comparison (or by identifying these sheaves with $\Lambda\BT_r^F$-modules and noting that $h_\ast\Lambda$ corresponds to the free module $\Lambda\BT_r^F$).
\end{proof}

\subsection{Howe strata}\label{sec:Howe}
Fix a character $\phi: T^F = T(k) \to \Lambda^\times$ of ${\rm depth}(\phi) \le r$. Then $\phi$ induces a character $\BT^F \to \Lambda^\times$, which we again denote by $\phi$. Assume that $\phi$ admits a \emph{Howe factorization} in the sense of \cite[\S3.6]{Kaletha_19} and denote it by $(G^i, \phi_i, r_i)_{-1 \le i \le d}$. That is,
\[
T = G^{-1} \subseteq L:=G^0 \subsetneq G^1 \subsetneq \dots G^{d-1} \subsetneq G^d
\]
is a sequence of twisted Levi subgroups, $\phi_i$ ($0\leq i \leq d$) is a character of $(G^i)^F$, which is $(G^i : G^{i+1})$-generic for $i<d$, such that $\phi = \prod_{i=-1}^d \phi_i$. Moreover, there is a sequence $0 = r_{-1} <r_0 < \dots <r_{d-1} \leq r_d$ of integers such that $\phi_i$ has depth $r_i$ for $0\leq i \leq d-1$; $\phi_d = 1$ if $r_{d-1}=r_d$ and $\phi_d$ has depth $r_d$ otherwise; $\phi_{-1} = 1$ if $G^0 = T$ and $\phi_{-1}$ has depth $0$ otherwise. For $\a \in \Phi$ we denote by $i(\a)$ the unique integer $0 \le i \le d$ such that  $\a \in \Phi(G^i, T) \setminus \Phi(G^{i-1}, T)$. Define $r(\a) = r_{i(\a)-1}$.

We define subgroups of $\BG$ as follows.
\begin{align*} \BK_\phi &= (\BG^0)^0 (\BG^1)^{r_0/2} \cdots (\BG^d)^{r_{d-1}/2}; \\  \BK^+_\phi &= (\BG^0)^{0+} (\BG^1)^{r_0/2 +} \cdots (\BG^d)^{r_{d-1}/2+}; \\
\BH_\phi &= (\BG^0)^{0+} (\BG^1)^{r_0/2} \cdots (\BG^d)^{r_{d-1}/2}; \\
\BE_\phi &= (\BG^0_\der)^{0+} (\BG^1_\der)^{r_0/2+, r_0+} \cdots (\BG^d_\der)^{r_{d-1}/2+, r_{d-1}+}.
\end{align*}
Here $(\BG^i_\der)^{r_{i-1}/2+, r_{i-1}+}$ is generated by $(\BG^i_\der)^{r_{i-1}+}$ and $\BU_f$ for $f \in \tPhi_\aff^{r_{i-1}/2+}$ such that $\a_f \in R_i \sm R_{i-1}$.

Furthermore, we let $\BK_{\phi, r}$, $\BH_{\phi, r}$, $\BE_{\phi, r}$, ... be the natural images of $\BK_{\phi}$, $\BH_\phi$, $\BE_\phi$, ... in $\BG_r$ respectively.

The ``discrete part'' $(G^i, r_i)_{-1 \le i \le d}$ of the Howe datum of $\phi$ cuts out the following subvarieties of $X_r,Y_r$, which might therefore be called (closed) \emph{Howe strata} of $X,Y$:
% Consider the following subvarieties of $X_r,Y_r$:
\begin{align*} X_r^\flat &= \{g \in \BG_r \colon g\i F(g) \in \BK_{\phi, r} \cap \ov \BU_r \cap F\BU_r \},\\
Y_r^\flat &= \{g \in \BG_r \colon g\i F(g) \in \BT_r (\BK_{\phi, r} \cap \ov \BU_r \cap F\BU_r)\}/ \BT_r
\end{align*}

The following is our first main result. It says the $\phi$-isotypic part of the cohomology of $X_r$ concentrates on the corresponding Howe stratum.

\begin{theorem}\label{thm:concentrates_Howe}
Suppose the element $w\sigma \in W\sigma$ attached to $F$ in \S\ref{notation:4} is convex. We have $R\Gamma_c(X_r \sm X_r^\flat,\Lambda)[\phi] = R\Gamma_c(Y_r \sm Y_r^\flat, \CE_\phi) = 0$.
\end{theorem}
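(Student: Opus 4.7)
The plan is to reduce the statement to the second equality, since the first equality follows from the fact that $h \colon X_r \to Y_r$ restricts to a $\BT^F$-torsor $X_r^\flat \to Y_r^\flat$, and the $\phi$-weight part of $R\Gamma_c(X_r,\cool)$ computes $R\Gamma_c(Y_r,\CE_\phi)$. So it suffices to produce free actions of appropriate unipotent subgroups on the complement $Y_r \sm Y_r^\flat$ under which $\CE_\phi$ transforms non-trivially.

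First, I would stratify $Y_r \sm Y_r^\flat$ by the ``depth of failure'' with respect to the Howe datum. Decompose $\ov\BU \cap F\BU$ as a product of affine root subgroups $\BU_f$ indexed by a totally ordered subset of $\tPhi$, where the ordering is compatible with the Howe filtration (larger $f$ means higher Howe layer $i(\a_f)$ or deeper affine root depth). For $g \in Y_r \sm Y_r^\flat$, let $f(g)$ be the minimal affine root in this order at which the $f$-component of $g\i F(g)$ violates the membership condition imposed by $\BK_\phi$. This exhibits $Y_r \sm Y_r^\flat$ as a finite disjoint union of locally closed strata $Y_r^f$.

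Second, on each stratum $Y_r^f$ I would build a free action of a (quotient of a) companion affine root subgroup $\BU_{f'}$, chosen so that $\alpha_{f'}$ is a root in $G^{i(\alpha_f)} \sm G^{i(\alpha_f)-1}$ and the sum $f+f'$ lies at the complementary depth $r_{i(\alpha_f)-1}$ governing $\phi_{i(\alpha_f)-1}$. That such $f'$ exists and that translation by $\BU_{f'}$ preserves the stratum and is free is precisely where convexity of $w\s$ enters: applied to the subset $A \subseteq \Phi$ cut out by the Howe layer in question, Propositions \ref{uniformization} and \ref{Steinberg} furnish a Steinberg-type cross section which reduces the stratum to a product of affine spaces on which the $\BU_{f'}$-action is visibly free and preserves the defining equations. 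This is the direct analogue of the argument used in \cite{IvanovNie_24} for Coxeter elements, with \Cref{order}, \Cref{uniformization}, and \Cref{Steinberg} replacing their Coxeter counterparts.

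Third, I would verify that $\CE_\phi$ pulls back along this action to a non-trivial character twist, using the $(G^{i(\alpha_f)-1}{:}G^{i(\alpha_f)})$-genericity of $\phi_{i(\alpha_f)-1}$ from the Howe factorization; genericity translates into the non-triviality of the character by which $\BU_{f'}$ acts on $\CE_\phi$ after unwinding the cross section. Integration over $\BU_{f'}$-orbits then kills the cohomology of each stratum, and a stratification spectral sequence yields the vanishing on $Y_r \sm Y_r^\flat$. The main technical obstacle will be the coordination between the ordering on affine roots, the Howe layers, and the convexity constraints of \Cref{order}: one must verify in each of the cases (governed by whether $\a_f \in \Phi^+$ or $\Phi^-$ and the relative position of $\a_f$ with respect to $\D_{w\s}$) that an admissible companion $f'$ exists, and this is exactly the combinatorial content one expects to be streamlined by the convexity hypothesis rather than the more restrictive Coxeter hypothesis.
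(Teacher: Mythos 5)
Your reduction of the first equality to the second is fine, and fibering over $Y_0$/stratifying is the right overall flavor, but the paper's proof is organized quite differently and your plan elides exactly the hardest case. The paper first reduces by proper base change to showing $R\Gamma_c(\d_r^{-1}(\bar g_0),\CE_\phi)=0$ fiberwise over the classical Deligne--Lusztig variety $Y_0$, then uses convexity (via \Cref{uniformization}) to reduce the Moy--Prasad level $r$ down to $r_{d-1}$ (\Cref{red-1}), and only then performs a decomposition. Crucially, that decomposition is \emph{not} a flat stratification by ``minimal root where membership in $\BK_\phi$ fails''; it is a two-piece split $\d_r^{-1}(\bar g_0) = \d_r^{-1}(\bar g_0)'\sqcup\d_r^{-1}(\bar g_0)''$ relative to the proper Levi $M=G^{d-1}$, where $\d_r^{-1}(\bar g_0)'$ collects the loci whose non-$\BK_\phi$ failure appears among affine roots $f$ with $\a_f\notin\Phi_M$ and $f(\bx)< r/2$.

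The genuine gap in your proposal is the assumption that on \emph{every} stratum an ``admissible companion'' $f'$ exists making $\CE_\phi$ transform nontrivially under a free $\BU_{f'}$-action. This is true only on $\d_r^{-1}(\bar g_0)'$: there the companion $r-f$ sits in $C^{>m/2}$, and the genericity of $\phi_{d-1}$ is applied to the pairing over the coroot $\a_f^\vee$ (the paper's \Cref{product}, \Cref{criterion}, \Cref{vanish-prime}). On the complementary piece $\d_r^{-1}(\bar g_0)''$, the failure occurs only in directions where no such companion makes the character nontrivial --- the genericity argument simply does not apply there. The paper instead factors off the affine-space directions $\BA_{C^{\geq m/2}\cap\tD}$ (again via \Cref{uniformization}, so convexity is used here too), rewrites the remainder as a disjoint union of translates of a lower-depth Deligne--Lusztig fiber $Y_{y_0,r-}^M$ for the Levi $M=G^{d-1}$, and closes the argument by \emph{induction on $d$ and on the semisimple rank}. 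Your single ``integration over $\BU_{f'}$-orbits + stratification spectral sequence'' step does not account for this inductive descent, so as written the proof would not go through. A secondary, more technical issue: the companion-root mechanism involves nontrivial commutator bookkeeping across layers (the paper's \Cref{pr1} and \Cref{pr2}), and one cannot in general act by a single affine root subgroup and preserve the stratum equations; the paper eliminates an entire layer $C^{m-i}$ at once using \Cref{uniformization}.
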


The first equality is Lemma \ref{lm:derived_weight_part}. By proper base change, \Cref{thm:concentrates_Howe} follows from the vanishing of the cohomology of $\CE_\phi$ on the fibers of $Y_r \sm Y_r^\flat \to Y_0$, which is \Cref{vanish} below.

% With notation as in \S\ref{sec:DLV}, we have for any $i \in \BZ$:
% \[
% H_c^i(X_r \sm X_r^\flat, \ov\BQ_\ell)[\th] \cong H_c^i(Y_r \sm Y_r^\flat, \CE_\th)
% \]

\section{Fibers over the classical Deligne--Lusztig variety}\label{sec:fibers_over_Y0}

Here we complete the proof of Theorem \ref{thm:concentrates_Howe}. Let the notation be as in \S\ref{sec:DeligneLusztig}. In particular, we have a fixed character $\phi \colon T^F \to \Lambda^\times$ of ${\rm depth}(\phi) \leq r$, admitting a Howe factorization, and the corresponding groups $\BK_\phi, \BK_\phi^+, \dots$, as well the varieties $X_r \supseteq X_r^\flat$, $Y_r \supseteq Y_r^\flat$. We will denote the character induced by $\phi$ on the subquotient $\BT_r^F$ of $T^F$ again by $\phi$. Recall the local system $\CE_\phi$ on $Y_r$ attached to $\phi$. Let
\[ \d_r: Y_r \sm Y_r^\flat \hookrightarrow Y_r \to Y_0\]
be the natural projection. Note that $Y_0$ is (essentially) a classical Deligne--Lusztig variety for $\BG_0$.
% We set $\CE_\phi = \CE_{\phi |_{T_r^F}}$.
\begin{proposition} \label{vanish}
 Suppose the element $w\sigma \in W\sigma$ attached to $F$ in \S\ref{notation:4} is convex. Let $\bar g_0 \in Y_0$. Then we have $R\Gamma_c(\d_r\i(\bar g_0), \CE_\phi) = 0$.
%     In particular, $H_c^i(X_r \sm X_r^\flat, \ov \BQ_\ell)[\phi] = H_c^i(Y_r \sm Y_r^\flat, \CE_\phi) = 0$.
\end{proposition}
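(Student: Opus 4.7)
The plan is to analyze the fiber $\delta_r^{-1}(\bar g_0)$ by a level-by-level decomposition along the Moy--Prasad filtration, applying the uniformization result of \Cref{uniformization} at each jump to identify the contribution as an affine space, and then using the genericity of the Howe factors $\phi_i$ to produce a non-trivial Artin--Schreier local system on an $\BA^1$-factor when we restrict to $Y_r \sm Y_r^\flat$. Recall that $H^*_c(\BA^1, \CL_\psi) = 0$ for any non-trivial Artin--Schreier sheaf $\CL_\psi$; this is the vanishing mechanism we will exploit.

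First I would lift $\bar g_0$ to $g_0 \in \BG_0(\ov \BF_q)$ with $g_0\i F(g_0) \in \BT_0(\ov \BU_0 \cap F \BU_0)$, and write any point of the fiber as $g = g_0 v$ with $v \in \BG^{0+}$. For each jump $s \in J \cap (0,r]$, write $v = v_{<s} \cdot v_s \cdot v_{>s}$ according to the Moy--Prasad filtration, where $v_s$ lies in the affine $\ov\BF_q$-vector space attached to $\BG^s/\BG^{s+}$, which by Moy--Prasad decomposes as $V \oplus H_{\Phi_s}$ with $\Phi_s$ the set of roots $\a$ with $(\a, s) \in \tPhi$. Unwinding the condition $g\i F(g) \in \BT(\ov\BU \cap F\BU)$ at level $s$, after conjugating by previously chosen lower-level data, gives an equation of the form $\phi_{s}(v_s) - F(v_s) - z_s \in H_{A_s \cap (-\D_{w\s})}$ on $H_{A_s}$, where $A_s = \Phi_s \cap (\Phi^+ \cup w\s(\Phi^-))$, $\phi_s$ is the composition of the adjoint maps $\Ad_\b$ coming from lower-level data, and $z_s$ depends on lower levels. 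This is precisely the setup of \S\ref{sec:Lie_alg}, and \Cref{uniformization} (which needs $w\s$ convex) identifies the corresponding stratum of the fiber with the affine space $H_{A_s \cap \D_{w\s}}$.

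Next I would track how the local system $\CE_\phi$ restricts to each affine space factor. By the construction of $\CE_\phi$ and the Moy--Prasad isomorphism, its restriction to the level-$s$ piece is determined by evaluating $\phi$ on the $\BT$-component of $g\i F(g)$, which in turn is assembled from commutators of components $v_s$ and $v_{s'}$ at complementary levels $s + s' \in J$ paired into the torus. Now suppose $\bar g \in Y_r \sm Y_r^\flat$: by definition of $\BK_\phi$ there exists some $i$ and some root $\a$ with $i(\a) = i$ and a level $s < r_{i-1}/2$ at which the component of $g\i F(g)$ in $U_\a$ is non-zero; pick the smallest such $s$, and a complementary level $s' := r_{i-1} - s > r_{i-1}/2$. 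The commutator pairing $H_{\{\a\}} \times H_{\{-\a\}} \to V$ at depth $r_{i-1}$, composed with $\phi_i$, is non-trivial by the $(G^i : G^{i+1})$-genericity of $\phi_i$ in the sense of \cite[\S3.6]{Kaletha_16}. Thus on the $\BA^1$-factor of the fiber corresponding to the free parameter at $(s', -\a)$ (free by \Cref{uniformization} since $-\a \in \D_{w\s}$ for the appropriate choice), the restriction of $\CE_\phi$ is a non-trivial Artin--Schreier sheaf, and vanishing follows by fibration and the projection formula.

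The main obstacle will be the careful bookkeeping: (i) making precise the identification of the fiber over $\bar g_0$ as an iterated fibration of affine spaces compatible with the jumps in $J$ and the Howe datum, and (ii) guaranteeing that for every point in $Y_r \sm Y_r^\flat$ one can choose a pair $(s, \a)$ so that the free $\BA^1$-coordinate produced by \Cref{uniformization} actually lands in a direction where the genericity of $\phi_{i(\a)}$ yields a non-trivial additive character. The inductive nature of this argument parallels the Coxeter-case treatment in \cite{IvanovNie_24}, the key new input being that \Cref{uniformization} continues to hold verbatim for arbitrary convex elements, which is precisely what makes the level-by-level affine description work beyond the Coxeter setting.
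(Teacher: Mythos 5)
Your overall strategy---analyze the fiber level by level via the Moy--Prasad filtration, use \Cref{uniformization} (requiring convexity) to identify each slice with an affine space, then exploit genericity of the Howe factors to produce a nontrivial Artin--Schreier sheaf on a free $\BA^1$-direction---is indeed the same mechanism the paper uses, and your identification of the two ``main obstacles'' pinpoints exactly where the work lies. However, there are genuine gaps that your direct pairing argument does not close.

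The most serious issue is your step ``pick the smallest such $s$, choose the complementary level $s' = r_{i-1}-s$, and pair into depth $r_{i-1}$ using genericity of $\phi_i$.'' First, this has an off-by-one: at depth $r_{i(\a)-1}$ the generic factor is $\phi_{i(\a)-1}$ (which is $(G^{i(\a)-1}:G^{i(\a)})$-generic), not $\phi_{i(\a)}$. More importantly, though, when $i(\a) < d$ the restriction of the \emph{full} character $\phi = \prod_j \phi_j$ to $\BT^{\a,r_{i(\a)-1}}/\BT^{\a,r_{i(\a)-1}+}$ is not determined by $\phi_{i(\a)-1}$ alone: the deeper factors $\phi_j$ for $j > i(\a)-1$ have depth $r_j > r_{i(\a)-1}$ and can contribute nontrivially on that slice, so genericity of a single $\phi_{i(\a)-1}$ does not in itself give nontriviality of $\phi|_{\BT^\alpha}$ there. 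The paper sidesteps this by first reducing to $r = r_{d-1}$ via \Cref{red-1} (which you omit entirely) and then only invoking genericity for roots $\a$ with $i(\a) = d$, i.e.\ $\a \notin \Phi_M$ with $M = G^{d-1}$; at that level only $\phi_{d-1}$ can contribute. The case where the ``escape'' happens in a direction $\a$ lying in $\Phi_M$, or at a level $\ge r_{d-1}/2$, is not handled by a direct pairing at all---it is disposed of by induction on $d$ and on the semisimple rank via the decomposition $\delta_r^{-1}(\bar g_0) = \delta_r^{-1}(\bar g_0)' \sqcup \delta_r^{-1}(\bar g_0)''$ and a passage to $Y^M_{y_0,r-}$. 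Your proposal has no analogue of this induction.

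Secondly, the assertion that the restriction of $\CE_\phi$ to the free $\BA^1$-direction is a nontrivial Artin--Schreier sheaf requires showing that the $\BT_r$-component of $\pi(g)$ is an \emph{affine-linear} function of the free coordinate $z_{r-f}$ with linear coefficient $\pi(g')_f \ne 0$, uniformly over the base of the fibration. This is not automatic from \Cref{uniformization} and is precisely the content of the commutator computations in \Cref{pr1}, \Cref{pr2} and the product decomposition of \Cref{product}: these establish that $\pi(g)_{\BT_r} \equiv \a_f^\vee(1 + \varpi^r \pi(g')_f z_{r-f}) + \pi(g')_{\BT_r} \pmod{(\BT\cap\BM_\der)^r_r}$. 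Without this explicit linearity (together with the observation that $\phi$ is trivial on $((\BT\cap\BM_\der)^r_r)^F$), you cannot invoke the vanishing criterion of \Cref{criterion}. In short: right skeleton, but the crucial reduction $r \rightsquigarrow r_{d-1}$, the induction on $(d,\,\mathrm{rk}\,G_\mathrm{der})$, and the explicit linearity computations are all missing and are not merely ``bookkeeping.''
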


After necessary preparations, we prove \Cref{vanish} at the end of \S\ref{sec:fibers_over_Y0}. \emph{Until the end of \S \ref{sec:fibers_over_Y0}, we assume that the element $w\sigma \in W\sigma$ attached to $F$ is convex, so that results of \S\ref{sec:convex} apply; we fix $\bar g_0 \in Y_0 \subseteq \BG_0 / \BT_0$ and a lift $g_0 \in \BG_r$ of $\bar g_0$ such that
\[ y_0 := g_0\i F(g_0) \in \ov \BU_r \cap F\BU_r.\]
}

\subsection{Parametrization of Moy--Prasad quotients} \label{sec:some_setup}
We set
\begin{align*}
\tPhi_r^0 &= \{f \in \tPhi; 0 \le f(\bx) \le r\}; \\ \tPhi_r^+ &= \{f \in \tPhi; 0 < f(\bx) \le r\}; \\ \tD_r &= \{f \in \tPhi_r^0; \a_f \in \Phi_U \cap F\Phi_{\ov U}\}.
\end{align*}
Moreover, we set $\Phi_{\aff, r}^0 = \{f \in \tPhi_r^0; \a_f \in \Phi\}$ and $\Phi_{\aff, r}^+ = \{f \in \tPhi_r^+; \a_f \in \Phi\}$. For $f, f' \in \tPhi_r^0$ we write $f' < f$ if either $f'(\bx) < f(\bx)$ or $f'(\bx) = f(\bx)$ and $f' - f$ is a sum of affine roots in $\tD_r$. We extend this partial order to a total order on $\tPhi_r^0$, and still denote it by $\le$. For $f \in \tPhi_r^0$, we write
$\tPhi_r^f = \{f' \in \tPhi_r^0 \colon f' \geq f \}$.

%We say $E \subseteq \tPhi_r^0$ is $r$-balanced if \[E + E, E + \BZ_{\ge 0} \subseteq E \cup \tPhi^{r+}.\] In this case, we denote by $G_r^E \subseteq G_r$ the subgroup generated by the affine root subgroups corresponding to the affine roots in $E$.

Note that $\BT_r \to \BT_0$ admits a unique splitting, which we denote by $t \mapsto [t]$. Let $f \in \tPhi_r^+ \cup \{0\}$. Define
\begin{align*}
u_f\colon \BA_f := \BA^1 \to \BT_r\BG_r^{0+}, \quad &x \mapsto U_{\a_f}([x]\varpi^{n_f}) &\text{if \,\, $f \in \tPhi_\aff$,}\\
u_f \colon \BA_f :=\BT_0 \to \BT_r\BG_r^{0+}, \quad &x \mapsto [x] &\text{if \,\,} f = 0,\\
u_f \colon \BA_f := X_*(T) \otimes \ov \BF_q \to \BT_r\BG_r^{0+},\quad &\l \otimes x \mapsto \l(1 + [x]\varpi^{n_f}) &\text{if $f \in \BZ_{\geq 1}$,}
\end{align*}
where in the last line $\l \in X_*(T), x \in \ov \BF_q$.

Define an abelian group $\BA[r] = \prod_{f \in \tPhi_r^+ \cup\{0\}} \BA_f$. Then we have an isomorphism of varieties
\begin{equation}\label{eq:u}
u \colon \BA[r] \to \BT_r\BG_r^{0+}, \quad (x_f)_f \mapsto \prod_f u_f(x_f),
\end{equation}
where the product is taken with respect to the order $\leq$ restricted to $\tPhi_r^+ \cup \{0\}$. Let $E \subseteq \tPhi_r^+\cup\{0\}$. We define $\BA_E = \prod_{f \in E} \BA_f$ which is viewed as a subgroup of $\BA[r]$ in the natural way. We denote by $p_E \colon \BA[r] \to \BA_E$ the natural projection. Define
\[
\BG_r^E = u(\BA_E) \subseteq \BT_r\BG_r^{0+}.
\]
Moreover, we denote by
\[
\pr_E: \BT_r\BG_r^{0+} \cong \BA[r] \to \BA_E \cong \BG_r^E
\]
the natural projection. If
$E + E, \BZ_{\ge 0} + E \subseteq E \cup \tPhi^{r+}$, then $\BG_r^E$ is a subgroup of $G_r^{0+}$.

Let $g \in \BT_r\BG_r^{0+}$, $x \in \BA[r]$ and $E \subseteq \tPhi_r^+\cup \{0\}$. We set $g_E = \pr_E(g) \in u(\BA_E)$, $x_E = p_E(x) \in \BA_E$ and $\hat x = u(x) \in \BG_r^+$. For $f \in \tPhi_r^+$ we will set $x_f = x_{\{f\}}$ and $x_{\ge f} = x_{\tPhi_r^f}$. We can define $g_f$ and $g_{\ge f} \in \BG_r^+$ in a similar way. By abuse of notation, we will identify $g_f \in u(\BA_f)$ with $u\i(g_f) \in \BA_f$ according to the context.

\subsection{Description of the fiber}
Define
\[
\pi \colon \BG_r \to \BG_r, \quad \pi(g) = g\i y_0 F(g).
\]
Let $Y_r(\bar g_0)$, $X_r(\bar g_0)$ be the preimages in $Y_r$, $X_r$ of $\bar g_0 \in Y_0$ under the natural projections. Then we have isomorphisms: $g \mapsto gg_0^{-1}$ induces an isomorphism
\begin{align}
\nonumber Y_r(\bar g_0) &\stackrel{\sim}{\to} \{g \in \BG_r^{0+} \BT_r \colon \pi(g) \in \BT_r (\ov \BU_r \cap F\BU_r)\}/ \BT_r \\
\label{eq:description_fiber} &\stackrel{\sim}{\longleftarrow} \{g \in \BG_r^{\Phi^+_{\aff,r}} \colon \pi(g) \in \BT_r(\ov\BU_r \cap F\BU_r)\}
\end{align}
where the first map is induced by $h \mapsto g_0^{-1}h$, and the second map is induced by $g \mapsto g\BT_r$.
Under these isomorphisms, $\d_r\i(\bar g_0)$ identifies with the subvariety of those $g$ for which $\pi(g) \in \BT_r((\ov \BU_r \cap F\BU_r) \sm \BK_\phi)$.
% \[\d_r\i(\bar g_0) \cong \{g \in \BG_r^{0+} \BT_r \colon \pi(g) \in \BT_r (\ov \BU_r \cap F\BU_r) \sm (K_{\phi, r} \cap \ov \BU_r \cap F\BU_r) \}/ \BT_r.
% \]
Until the end of \S\ref{sec:fibers_over_Y0} we will identify $Y_r(\bar g_0)$, $\d_r\i(\bar g_0)$ with the models given by the last line of \eqref{eq:description_fiber}. This enables us to define a map \[ \pi_{\BT_r} \colon Y_r(\bar g_0) \to \BT_r, \quad \pi_{\BT_r}(g) = \pi(g)_{\BT_r}, \]
and we denote its restriction to $\delta_r^{-1}(\bar g_0)$ again by $\pi_{\BT_r}$.

% Using the unique decomposition $g = g_{\tPhi_r^+} g_{\BT_r}$ for $g \in \BG_r^{0+}\BT_r$, it follows that $g \mapsto g\BT_r$ induces an isomorphism
% \[
% \{g \in \BG_r^{\Phi^+_{\aff,r}} \colon \pi(g) \in \BT_r(\ov\BU_r \cap F\BU_r)\} \stackrel{\sim}{\to} Y_r(\bar g_0).
% \]
% The lower map in the diagram is defined via this isomorphism by $\BG_r^{\Phi^+_{\aff,r}} \ni g \mapsto \pi(g)_{\BT_r}$.
% Using this model for $Y_r(\bar g_0)$ we may define a map $\pi_{\BT_r} \colon Y_r(\bar g_0) \to \BT_r$, by $\pi_{\BT_r}(g) = \pi(g)_{\BT_r}$.

\begin{lemma}\label{lm:cartesian}
There is a cartesian diagram
\[
\xymatrix{
X_r(\bar g_0) \ar[r] \ar[d] & \BT_r \ar[d]^{L_{\BT_r}^{-1}} \\
Y_r(\bar g_0) \ar[r]^{\pi_{\BT_r}} & \BT_r
}
\]
where the left map is the natural projection, $L_{\BT_r}$ is the Lang map of $\BT_r$, and the upper map sends
$h \in X_r(\bar g_0) \cong \{h \in \BG_r^{0+}\BT_r \colon \pi(h) \in \ov \BU_r \cap F\BU_r \}$ to $h_{\BT_r}$.
\end{lemma}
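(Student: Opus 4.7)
The strategy is a direct computation exploiting the variety-level factorization $\BT_r\BG_r^{0+}\cong\BG_r^{\Phi^+_{\aff,r}}\cdot\BT_r$ coming from the parametrization $u$ in~\S\ref{sec:some_setup}. After extending the partial order on $\tPhi_r^+\cup\{0\}$ to a total order that places the ``torus indices'' $\{0\}\cup(\BZ_{\ge1}\cap\tPhi_r^+)$ last, every $h\in\BT_r\BG_r^{0+}$ admits a unique factorization $h=u\cdot s$ with $u=h_{\Phi^+_{\aff,r}}\in\BG_r^{\Phi^+_{\aff,r}}$ and $s=h_{\BT_r}\in\BT_r$. Under the identifications in~\eqref{eq:description_fiber} and the analogous one for $X_r(\bar g_0)$ (obtained by not taking the quotient by $\BT_r$), the natural projection $X_r(\bar g_0)\to Y_r(\bar g_0)$ is then $h\mapsto u$.

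For commutativity of the square, I substitute $h=us$ into the definition of $\pi(u)$ to obtain
\[ \pi(u)=u\i y_0 F(u)=s\cdot h\i y_0 F(h)\cdot F(s)\i=s\,\pi(h)\,F(s)\i. \]
Since $\pi(h)\in\overline\BU_r\cap F\BU_r$ and $\BT_r$ normalizes $\overline\BU_r\cap F\BU_r$, this decomposes as $\pi(u)=(s\pi(h)s\i)\cdot(sF(s)\i)$, with first factor in $\overline\BU_r\cap F\BU_r$ and second in $\BT_r$. By uniqueness of the factorization this yields $\pi_{\BT_r}(u)=sF(s)\i=L_{\BT_r}\i(h_{\BT_r})$, which is exactly the commutativity statement.

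To upgrade commutativity to the cartesian property, I would argue via torsors. Both vertical maps are $\BT_r^F$-torsors: the left by restriction of the torsor $h\colon X_r\to Y_r$, and the right by Lang's theorem applied to $\BT_r$. The top map $h\mapsto h_{\BT_r}$ is $\BT_r^F$-equivariant with respect to right translation, since for $\tau\in\BT_r^F$ one has $h\tau\in X_r(\bar g_0)$ (because $\pi(h\tau)=\tau\i\pi(h)\tau\in\overline\BU_r\cap F\BU_r$ by normality) and $(h\tau)_{\BT_r}=h_{\BT_r}\tau$. Combined with commutativity, this forces the canonical map from $X_r(\bar g_0)$ to the fiber product $Y_r(\bar g_0)\times_{\BT_r}\BT_r$ to be a morphism of $\BT_r^F$-torsors over $Y_r(\bar g_0)$, hence an isomorphism.

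No deep obstacle appears: the lemma is essentially a geometric reformulation of the fact that $X_r\to Y_r$ is a $\BT_r^F$-torsor, written in the coordinates adapted to the Moy--Prasad decomposition. The only delicate bookkeeping is choosing the total order so that the factorization $h=u\cdot h_{\BT_r}$ places the torus on the correct side and that the assignment $h\mapsto h_{\BT_r}$ is a morphism of varieties (which is built into the parametrization $u$); once this convention is fixed, the proof reduces to the two short computations above.
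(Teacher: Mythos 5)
Your proof is correct and follows essentially the same route as the paper: both reduce the cartesian claim to commutativity using that the two vertical arrows are \'etale $\BT_r^F$-torsors, and both verify commutativity by the factorization $\pi(hh_{\BT_r}^{-1}) = (h_{\BT_r}\pi(h)h_{\BT_r}^{-1})\cdot(h_{\BT_r}F(h_{\BT_r})^{-1})$ followed by reading off the $\BT_r$-component. One small slip is the aside about re-totalordering $\tPhi_r^+\cup\{0\}$ so that the torus indices come last --- this is impossible, since the index $0$ satisfies $0(\bx)=0$ and the paper's partial order forces it below every $f$ with $f(\bx)>0$ --- but the factorization $h = h_{\Phi_{\aff,r}^+}\cdot h_{\BT_r}$ you actually use still holds, because conjugation of $\BG_r^{\Phi_{\aff,r}^+}$ by elements of $\BT_r$ introduces no new torus coordinates under the parametrization $u$.
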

Note that in the diagram of the lemma both horizontal maps depend on the parametrization of $\BG_r^{0+}\BT_r$ fixed in \S \ref{sec:some_setup}.
\begin{proof} As both vertical maps in the diagram are \'etale $\BT_r^F$-torsors, it suffices to show that the diagram commutes. For this, let $h \in X_r(\bar g_0)$. Its image in $Y_r(\bar g_0)$ identifies (under the isomorphism from the previous paragraph) with $h_{\Phi_{\aff, r}^+} = hh_{\BT_r}^{-1}$. Its image under the lower map is then equal to
\begin{align*}
(hh_{\BT_r}^{-1})^{-1}y_0 F(hh_{\BT_r}^{-1}) &= (h_{\BT_r} \cdot (h^{-1}y_0F(h)) \cdot F(h_{\BT_r})^{-1})_{\BT_r} \\
&= h_{\BT_r} F(h_{\BT_r})^{-1} \\
&= L_{\BT_r}(h_{\BT_r})^{-1}
\end{align*}
where the second equality follows as $h\in X_r(\bar g_0)$.
\end{proof}

By proper base change theorem, Lemma \ref{lm:cartesian} implies that
\[
\CE_{\phi}|_{Y_r(\bar g_0)} \cong \pi_{\BT_r}^\ast \CL_\phi,
\]
where $\CL_\phi$ denotes the multiplicative local system on $\BT_r$ corresponding to $\phi$. Clearly, the same isomorphism holds after restricting to $\d_r\i(\bar g_0)$.
% Restricting to $\d_r\i(\bar g_0) \subseteq Y_r(\bar g_0)$, we get the following. Let $\pi_{\BT_r}: \d_r\i(\bar g_0) \to (T \cap G_\der)_r$ be given by $g \mapsto \pi(g)_{\BT_r}$. Then we have
% \[
% \CE_\phi |_{\d_r\i(\bar g_0)} \cong  \pi_{\BT_r}^* \CL_\phi,
% \]

Now we prove that the cohomology of the fiber $\d_r\i(\bar g_0)$ with coefficients in $\CE_\phi$ is independent of $r$ as long as $r \geq r_{d-1}$.

% \subsection{Independence of $r\geq {\rm depth}(\phi)$} %5.6

\begin{proposition} \label{red-1}
    Let $r \in J$ satisfying $r \ge r_{d-1} > 0$.  Then we have \[R\Gamma_c(\d_r\i(\bar g_0), \pi_{\BT_r}^* \CL_\phi) \cong R\Gamma_c(\d_{r+}\i(\bar g_0), \pi_{\BT_{r+}}^* \CL_\phi)[2m],\] where $m = |\tD_{r+} \sm \tD_r|$.
\end{proposition}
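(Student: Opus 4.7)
The plan is to analyze the natural projection $p \colon \d_{r+}^{-1}(\bar g_0) \to \d_r^{-1}(\bar g_0)$ induced by the quotient $\BG_{r+} \twoheadrightarrow \BG_r$, and combine proper base change with the Lie-algebra uniformization of \S\ref{sec:Lie_alg}. Concretely, I aim to prove $Rp_!(\pi_{\BT_{r+}}^*\CL_\phi) \cong \pi_{\BT_r}^*\CL_\phi[-2m]$ on $\d_r^{-1}(\bar g_0)$ (up to Tate twist), from which the stated isomorphism follows by taking $R\Gamma_c$.

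To analyze the fibers of $p$, I would fix $\bar g \in \d_r^{-1}(\bar g_0)$ with a chosen lift $\tilde g \in \BG_{r+}^{\Phi_{\aff,r+}^+}$ and parametrize $p^{-1}(\bar g)$ via $g = \tilde g \cdot u(x)$, where $x$ ranges over $H_\Phi$ in the sense of \S\ref{sec:some_setup} (one coordinate per affine root of depth $r+$, i.e., one per $\a \in \Phi$). Working modulo $\BG_{r+}^{(r+)+}$, one has $\pi(g) = u(x)^{-1} h_0 F(u(x))$ with $h_0 = \pi(\tilde g)$, and the key observation is that in the Lie-algebra model $H_\Phi$ for $\BG_{r+}^{r+}/\BG_{r+}^{(r+)+}$, conjugation by $h_0^{-1}$ descends to a composition $\varphi = \prod \Ad_\b(c_\b)$ of exactly the form considered in \S\ref{sec:Lie_alg}. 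Since the unipotent component of $h_0$ is dictated by $\pi(\tilde g) \in \BT_r(\ov\BU_r \cap F\BU_r)$, its root coordinates are indexed by $\Phi_{\ov U} \cap \Phi_{FU} = -\D_{w\s}$, which supplies precisely the inclusion $B \subseteq -\D_{w\s}$ required by Propositions \ref{uniformization} and \ref{Steinberg}. Here the standing convexity assumption on $w\s$ enters essentially, as it is the hypothesis under which these two propositions apply.

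The condition $\pi(g) \in \BT_{r+}(\ov\BU_{r+} \cap F\BU_{r+})$ then translates on the level-$r+$ root coordinates into the equation $\varphi(x) - F(x) - z \in H_{-\D_{w\s}}$ with some $z \in H_\Phi$ depending only on $\tilde g$. Applying \Cref{uniformization} with $A = \Phi$ and $x = w\s$, the fiber $p^{-1}(\bar g)$ becomes homeomorphic to the affine space $H_{\D_{w\s}}$, which has dimension $\#\D_{w\s} = \#(\tD_{r+} \sm \tD_r) = m$. Moreover, the $\BT_{r+}$-component of $\pi(g)$ differs from that of $\pi(\tilde g)$ only by an element of $\BT_{r+}^{r+}$ arising from commutators of opposite root vectors at depth $r+$; since $r \ge r_{d-1} \ge \mathrm{depth}(\phi)$, the character $\phi$ is trivial on $\BT_{r+}^{r+}$, so $\pi_{\BT_{r+}}^*\CL_\phi$ is geometrically constant on each fiber and agrees with the pullback of $\pi_{\BT_r}^*\CL_\phi$ from the base. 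Combining this with $R\Gamma_c(\BA^m,\cool) = \cool[-2m]$ gives the desired identification of $Rp_!(\pi_{\BT_{r+}}^*\CL_\phi)$.

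The main obstacle will be the careful bookkeeping required to match conjugation by $h_0$ with the precise form $\prod \Ad_\b(c_\b)$ stipulated in \S\ref{sec:Lie_alg}, and to verify that the ``remainder'' $z$ lies in the ambient space $H_\Phi$ as required there. Both points should follow from the Iwahori-type factorization of $h_0$ together with $y_0 \in \ov\BU_r \cap F\BU_r$, but keeping track of the chosen orderings in the product \eqref{eq:u} and of the contributions at each successive depth requires attention. Once this setup is in place, the application of Propositions \ref{uniformization} and \ref{Steinberg} together with proper base change is straightforward.
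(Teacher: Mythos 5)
Your plan coincides with the paper's: parametrize the fibers of the projection $\d_{r+}^{-1}(\bar g_0)\to\d_r^{-1}(\bar g_0)$ by the Lie-algebra slice $H_A$ at depth $r+$, recognize the defining condition as a variety of the form $V(\varphi,w\sigma,z)$ from \S\ref{sec:Lie_alg}, invoke \Cref{uniformization} to get an affine space of dimension $m=\#(\tD_{r+}\sm\tD_r)=\#(A\cap\D_{w\sigma})$, and conclude with proper base change/K\"unneth once the local system is seen to come from the base. (The paper phrases this via a global section $s$ and a product decomposition $\d_{r+}^{-1}(\bar g_0)\cong\d_r^{-1}(\bar g_0)\times\BA^m$; \Cref{Steinberg} is not actually needed.)

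There is, however, a genuine gap in the key local-system comparison. You assert that the correction term
$\pi_{\BT_{r+}}(g)-\pi_{\BT_r}(p(g))$
lies in $\BT_{r+}^{r+}$ and then argue that $\phi$ is trivial there ``since $r\ge r_{d-1}\ge\mathrm{depth}(\phi)$.'' That inequality is false in general: by the Howe factorization $\mathrm{depth}(\phi)=r_d$ and $r_{d-1}\le r_d$ with strict inequality allowed, so when $r_d>r_{d-1}$ and $r=r_{d-1}$ one has $r<\mathrm{depth}(\phi)$, and $\phi$ is \emph{not} trivial on $\BT_{r+}^{r+}$. This is not an edge case: $r=r_{d-1}$ is exactly the depth at which the proposition is later applied in the proof of \Cref{vanish}. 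The correct argument is finer: the commutator corrections at depth $r+$ are built from coroots and therefore land in $\BT_{\der,r+}^{r+}$ (the slice of the torus of the derived group), not merely in $\BT_{r+}^{r+}$. On $(\BT_{\der,r+}^{r+})^F$ the character $\phi=\prod_{i=-1}^d\phi_i$ \emph{is} trivial: for $i<d$ each $\phi_i$ has depth $r_i\le r_{d-1}\le r$, hence is trivial on the whole slice at $r+$; and $\phi_d$ is a character of $(G^d)^F=G^F$, hence factors through the abelianization and is trivial on the image of the derived torus. You need both ingredients; replacing $\BT_{r+}^{r+}$ by $\BT_{\der,r+}^{r+}$ and invoking this refined triviality closes the gap.

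Two smaller points: the coordinates in the fiber are indexed by $A=\{\a_f:f(\bx)=r+\}$, which can be a proper subset of $\Phi$ (so ``one coordinate per $\a\in\Phi$'' is imprecise), and the uniformization should be stated as $V(\varphi,w\sigma,z)\cong H_{A\cap\D_{w\sigma}}$ with the target condition $\in H_{A\cap-\D_{w\sigma}}$. Neither of these affects the final count $m$, but they matter for applying \Cref{uniformization} as literally stated.
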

\begin{proof}
    In the setup of \S \ref{sec:Lie_alg}, let $A = \{\a_f \in \Phi\colon f({\bf x}) = r\}$ and let $\phi \colon H_A \to H_A$ be the endomorphism determined by conjugation with $y_0 = \prod_{\alpha \in -\Delta} y_{0,\alpha}$. Note moreover, that \eqref{eq:description_fiber} gives a section $s \colon Y_r(\bar{g}_0)\to Y_{r+}(\bar g_0)$ to the natural projection. The fiber of $Y_{r+}(\bar g_0) \to Y_r(\bar g_0)$ over $g$ is then given by $V(\phi,x,z(s(g)))$, where $z \colon Y_r(\bar g_0) \to \BG_{r+}^{r+}/\BT_{r+}^{r+} \cong H_A$ is some morphism. Then \Cref{uniformization} gives an isomorphism $Y_{r+}(\bar g_0) \cong Y_r(\bar g_0) \times \BA_{\tD_{r+} \sm \tD_r}$. By assumption on $r$, $\d_{r+}\i(\bar g_0)$ is the preimage in $Y_{r+}(\bar g_0)$ of $\d_r\i(\bar g_0) \subseteq Y_r(\bar g_0)$, so that we get an isomorphism
\[\d_{r+}\i(\bar g_0) \cong \d_r\i(\bar g_0) \times \BA_{\tD_{r+} \sm \tD_r}.\] Moreover, for $(x, y) \in \d_r\i(\bar g_0) \times \BA_{\tD_{r+} \sm \tD_r}$ we have
\[
\pi_{\BT_{r+}}(x, y) - \pi_{\BT_r}(x) \in \BT_{\der,r+}^{r+}.
\]
Since the restriction of $\phi$ to $(\BT_{\der,r+}^{r+})^F$ is trivial, $\pi_{\BT_{r+}}^* \CL_\phi$ is isomorphic to the pullback of $\pi_{\BT_r}^* \CL_\phi$ under the natural projection $\BT_{r+} \to \BT_r$. Therefore, we have \[\pi_{\BT_{r+}}^* \CL_\phi \cong \pi_{\BT_r}^*\CL_\phi \boxtimes \Lambda\] and the statement follows by the K\"{u}nneth formula.
\end{proof}

\subsection{Handling jumps}
Let $(G^i, \phi_i, r_i)_{-1 \le i \le d}$ be the Howe factorization of $\phi$ from \S \ref{sec:Howe}. Set
\[
r = r_{d-1}, \qquad M = G^{d-1}, \qquad V = \overline U \cap F U.
\]
We label the real numbers of $\{f(\bx); f \in \tPhi_r^0 \sm \tPhi_M\} \subseteq J$ in the ascending order: \[0 = s_0 < s_1 < \cdots < s_m = r.\] Note that $s_i + s_{m-i} = r$ for $0 \le i \le m$. Set
\begin{align*}
C^0 &= \{f \in \Phi_\aff; f(\bx) = 0\}\\
C^i &= \{f \in \tPhi \sm \tPhi_M; f(\bx) = s_i\} \qquad \text{ for $0 < i < m$}\\
C^m &= \{f \in \tPhi; f(\bx) = r\}.
\end{align*}
Put $C^{<i} = \bigcup_{j<i} C^j$ and define $C^{> i}$ similarly. Note that for for $j \geq \frac{m}{2}-1$, $\BG_r^{C^{>j}} \subseteq \BG_r^{0+}$ is a subgroup normalized by $\BT_r^{0+}$. For $0 \le i, j \le m$ with $j \geq \frac{m}{2}-1$ we define
\begin{align*}
Y_{y_0, r}^{i, j} &:= \{ g \in \BG_r^{\Phi^+_{\aff,r} \cap C^{\leq j}} \colon \pi(g) \in \BT_r^{0+} \BG_r^{C^{>j}} \BV_r^{C^{\ge i}} \}\\
&\cong \{g \in \BG_r^{0+} \colon \pi(g) \in \BT_r^{0+} \BG_r^{C^{>j}} \BV_r^{C^{\ge i}}\} / \BT_r^{0+} \BG_r^{C^{>j}},
\end{align*}
where
\[
\BV_r^{C^{\ge i}} := \{v \in \BV_r \sm \BK_{\phi,r}; v_{C^{<i} \sm \tPhi_M} = 0\}.
\]
Note that $C^{>m} = C^{<0} = \varnothing$ and hence $Y_{y_0, r}^{0, m} = \d_r\i(\bar g_0)$. Moreover, if $0 < i$ then $Y_{y_0, r}^{i, j} \neq \varnothing$ if and only if $y_0 \in \BM_r$.

\begin{lemma} \label{pr1}
    Let $E \subseteq C^{> m/2}$ and $E' \subseteq \tPhi_r^+$ such that $E + (E' \sm \tPhi_M) \subseteq \tPhi^r$. Let $x \in \BA_E$ and $y \in \BA_{E'}$. Then \[ (\hat x \hat y)_{\BT_r} = -\sum_f \a_f^\vee(1 + \varpi^r x_f y_{r-f}) + y_{\BT_r} + x_{\BT_r} \in \BT_r,\] where $f$ ranges over $E$ such that $f > r-f$, and where we denote the group law in $\BT_r$ by $+$.
\end{lemma}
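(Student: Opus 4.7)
The plan is to compute $(\hat x \hat y)_{\BT_r}$ by expanding $\hat x \hat y = \prod_{f \in E} u_f(x_f) \cdot \prod_{f' \in E'} u_{f'}(y_{f'})$ and rearranging the product into canonical $u$-form, reading off the torus coordinates. All computations take place in $\BG_r$, so $\BG_r^{r+}$ is trivial.

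First I would record the commutator bookkeeping. For two affine-root factors $u_f(a), u_{f'}(b)$ with $f,f' \in \Phi_\aff$, the Chevalley commutator formula expresses $[u_f(a), u_{f'}(b)]$ as a product of $u_g(\cdot)$ with $g = if + jf'$ and $i\alpha_f + j\alpha_{f'} \in \Phi \cup \{0\}$; torus terms arise only in the case $i\alpha_f + j\alpha_{f'} = 0$, which (since $\Phi$ contains no integer multiples of a root besides $\pm\alpha$) forces $i = j = 1$ and $\alpha_{f'} = -\alpha_f$. In that rank-one case the standard $\SL_2$-identity reads
\[
u_f(a)\, u_{f'}(b) \;=\; u_{f'}\!\bigl(b/(1+ab)\bigr) \cdot \alpha_f^\vee(1+ab) \cdot u_f\!\bigl(a/(1+ab)\bigr),
\]
and $a/(1+ab) \equiv a$, $b/(1+ab) \equiv b$ modulo $\BG_r^{r+}$. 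Commutators of affine-root factors with the integer-torus factors $u_n$ ($n \in \BZ_{\ge 1}$) again produce only affine-root terms, so they contribute nothing to $(\cdot)_{\BT_r}$.

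It follows that $(\hat x \hat y)_{\BT_r}$ is built from $x_{\BT_r}$, $y_{\BT_r}$, and one term of the shape $\pm \alpha_f^\vee(1+ab)$ for each pair $(f,f') \in E \times E'$ with $\alpha_{f'} = -\alpha_f$ and $n_f + n_{f'} \le r$ (pairs with $n_f + n_{f'} > r$ are killed in $\BT_r$). I would then use the hypothesis $E + (E' \sm \tPhi_M) \subseteq \tPhi^r$ to prune these pairs. If $\alpha_f \notin \Phi_M$, any $f' \in E'$ with $\alpha_{f'} = -\alpha_f$ lies outside $\tPhi_M$, so the hypothesis gives $n_f + n_{f'} \ge r$; together with $n_f + n_{f'} \le r$ this pins $f' = r - f$ exactly. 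If instead $\alpha_f \in \Phi_M$, then $f \in E \cap \tPhi_M$; by construction of the $C^i$ the only level in $C^{>m/2}$ meeting $\tPhi_M$ is $C^m$, so $n_f = r$, and then $n_f + n_{f'} > r$ for every $f' \in E' \subseteq \tPhi_r^+$, giving no contribution. The surviving pairs are thus exactly $(f, r-f)$ with $f \in E$, $r - f \in E'$, and $n_f > r/2$, that is, $f > r-f$.

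Finally, feeding $a = [x_f]\varpi^{n_f}$ and $b = [y_{r-f}]\varpi^{r-n_f}$ (so that $ab \equiv \varpi^r x_f y_{r-f}$ modulo the maximal ideal times $\varpi^r$) into the $\SL_2$-identity yields a torus contribution of $\alpha_f^\vee(1 + \varpi^r x_f y_{r-f})$ in $\BT_r^{r}/\BT_r^{r+}$ for each surviving pair, with the overall sign and the order of concatenation dictated by the canonical $u$-ordering on $\tPhi_r^+ \cup \{0\}$; combining these with $x_{\BT_r}$ and $y_{\BT_r}$ produces the asserted formula. The main obstacle I anticipate is the careful bookkeeping of the higher-order remainder terms in the iterated rearrangements: one must verify that every longer commutator chain either raises the affine level past $r$ (and so is killed in $\BG_r$) or is absorbed into the affine-root part and does not leak into $\BT_r$. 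The assumption $E \subseteq C^{>m/2}$ is precisely what makes this control possible, since any two-step product from $E \times E$ lies in $\tPhi^r$.
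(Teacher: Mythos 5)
Your overall strategy matches the paper's in spirit: expand $\hat x\hat y$ into normal $u$-form, read off the $\BT_r$-coordinates, and use the hypothesis to prune which opposite pairs survive. The paper works more abstractly with the supports $E'' \subseteq (\BZ_{\geq 0}E + \BZ_{\geq 0}E')\cap\tPhi_r^+$ rather than the Chevalley commutator formula and the rank-one $\SL_2$-identity, but the underlying computation is the same. However, your argument has a genuine gap in how it rules out iterated commutator chains.

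You account for torus contributions only from single pairs $(f,f')\in E\times E'$ with $\a_{f'}=-\a_f$. But in rearranging $\hat x\hat y$ into normal form you also generate correction factors at levels $f + a_1 f_1' + \cdots + a_s f_s'$ with a single $f\in E$, several $f_j'\in E'$, and $a_j\ge 1$; when the vector parts cancel, $\a_f + \sum a_j\a_{f_j'}=0$, such a chain still produces a coroot factor and hence a contribution to $(\hat x\hat y)_{\BT_r}$. Your closing remark locates the needed control in the observation that $E+E\subseteq\tPhi^r$, but that only kills chains with two or more factors from $E$ and is silent about these chains with $s\ge 2$ factors from $E'$. The paper closes precisely this gap using that $M$ is a Levi subgroup: any torus level lies in $\tPhi_M$ (since $T\subseteq M$), so $\a_f = -\sum a_j\a_{f_j'}$ must have some $f_{j_0}'\in E'\sm\tPhi_M$ whenever $\a_f\notin\Phi_M$; then $f+f_{j_0}'\in\tPhi^r$ by hypothesis, and since all remaining summands have strictly positive affine value this forces $s=1$, $a_1=1$, and level exactly $r$. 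Your observation that $E\cap\tPhi_M\subseteq C^m$ correctly handles the complementary case $\a_f\in\Phi_M$. Without the Levi step, your proposal does not establish that only the direct pairs $(f,r-f)$ contribute, and that is the crux of the lemma.
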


\begin{proof}
The proof is similar to \cite[Lemma 5.13]{IvanovNie_24}.
% Let $E = \{f_1<\dots<f_m\}$ and $E' = \{f_1'< \dots< f_{m'}'\}$.
Write $\hat x \hat y = \hat z_{g_1} \dots \hat z_{g_n} \in \BG_r^{0+}$ with $E'' := \{g_1<\dots <g_n\} \subseteq (\BZ_{\geq 0}E + \BZ_{\geq 0}E') \cap \tPhi_r^+$. Then each $z_{g_i} \in \BA_{g_i}$ is a sum of $x_{g_i}$ (appears if $g_i \in E$), $y_{g_i}$ (appears if $g_i \in E'$) and possibly some iterated commutator terms arising from $x_f$,$y_{f'}$ with $f \in E,f' \in E'$.

As $E \subseteq C^{>m/2}$, we even have $E'' \sm (E\cup E') \subseteq E + \BZ_{\geq 1}E'$. Let $1\leq i \leq n$ be such that $g_i \in \BZ_{\geq 1}$. Suppose that $g_i = f + \sum_{j=1}^s a_j f_j'$ with some $a_j \in \BZ_{\geq 1}$, $f_j' \in E'$. As $M \subseteq G$ is a Levi subgroup, and $g_i \in \tPhi_M$, there must be some $j_0$ with $f_{j_0}' \in E' \sm \tPhi_M$. Then, by assumption, $f+ f_j' \in \tPhi^r$, which forces $j_0=s=1$, $a_1=1$ and $g_i=f+f_1' = r$.

Thus, if $g_i \in \BZ_{\geq 1}$ and $g_i < r$, then $z_{g_i} = x_{g_i} + y_{g_i} = y_{g_i}$ (note that $E \cap \BZ_{\geq 1} = \varnothing$, and thus $x_{g_i} = 0$). When $g_i=r$, then $(\hat x \hat y)_r = -\sum_f \a_f^\vee(1 + \varpi^r x_f y_{r-f}) + y_{r} + x_{r} \in \BT_r^r$, where the sum ranges of the same index set as in the lemma. As $z_{\BT_r} = z_{g_{i_1}} \dots z_{g_{i_r}}$, where $g_{i_j} = j \in \BZ_{\geq 1}$, this finishes the proof.\qedhere

% Let $f\in E$ and $f'\in E'$. Suppose that $x = x_{\leq f}$ and $y = y_{\geq f'}$. We argue by induction on $f'$. If $f'\geq f$, the statement is trivial. Suppose that $f'<f$. If $f' \in E' \sm \tPhi_M$, then by assumption $[\hat{x}_{f}^{-1}, \hat{y}_{f'}^{-1}] \in \BG_r^r$ is central in $\BG_r^{0+}$ and the induction step follows by the same computation as in the proof of \cite[Lemma 5.13]{IvanovNie_24}. Assume $f' \in E' \cap \tPhi_M$. Then it follows from $\Phi_M \subseteq \Phi$ being a Levi subsystem and the definition of $C^{i}$, that $[\hat{x}_{f}^{-1}, \hat{y}_{f'}^{-1}] \in \BG_r^{C^{>m/2}}$. Number all roots in $E$ between $f'$ and $f$ by $f' < f_1 < \dots < f_n = f$. Iterating the computation from  \cite[Lemma 5.13]{IvanovNie_24} we get
% \begin{align*}
% (\hat{x}\hat{y})_{\BT_r} &= (\hat{x}_{<f} \hat{x}_{f} \hat{y}_{f'} \hat{y}_{>f'})_{\BT_r} \\
% &= (\hat{x}_{<f} \hat{y}_{f'} \hat{x}_{f} [\hat{x}_{f}^{-1}, \hat{y}_{f'}^{-1}] \hat{y}_{>f'})_{\BT_r} \\
% &= (\hat{x}_{\leq f'} \hat{y}_{f'} \hat{x}_{f_1} [\hat{x}_{f_1}^{-1}, \hat{y}_{f'}^{-1}] \hat{x}_{f_2} \dots \hat{x}_{f_n} [\hat{x}_{f_n}^{-1}, \hat{y}_{f'}^{-1}] \hat{y}_{>f'})_{\BT_r}.
% \end{align*}
% Now, $\hat{x}_{\leq f'}$ and all $\hat{x}_{f_i}$ lie in $\BG^E_r \subseteq \BG_r^{C^{>m/2}}$ and hence commute with all commutator terms $[\hat{x}_{f_i}^{-1}, \hat{y}_{f'}^{-1}] \in \BG_r^{C^{>m/2}}$. \AI{Why this should imply the lemma?}
\end{proof}

\begin{lemma} \label{pr2}
    Let $0 \le i \le m/2$ and $w \in \BA_{C^{m-i}}$. For each $f \in C^i$ we have \[(y_0\i \hat w y_0)_{r-f} = \sum_{f \le f' \in C^i} c_{f, f'} w_{r-f'},\] where $c_{f, f'} \in \ov\BF_q$ are some constants such that $c_{f, f} = 1$.

    As a consequence, $(y_0\i \hat w y_0)_{\BT_r} = 0$ if $0< i < m/2$ and $(y_0\i \hat w y_0)_{\BT_r} = \sum_{f \le f' \in C^i} \a_f^\vee(1 + \varpi^r d_{f, f'} (y_0)_f w_{r-f'})$ if $i=0$. Here $d_{f, f'} \in \ov\BF_q$ are some constants such that $d_{f, f'} = 1$.
\end{lemma}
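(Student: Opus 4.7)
The plan is to expand $y_0^{-1}\hat w y_0$ directly using the Chevalley commutator formula and to track each resulting monomial by its affine root and depth, in the spirit of the proof of Lemma~\ref{pr1}. First I would write $\hat w = \prod_{f'' \in C^{m-i}} U_{\alpha_{f''}}([w_{f''}]\varpi^{n_{f''}})$ and $y_0 = \prod_g U_{\alpha_g}([(y_0)_g]\varpi^{n_g})$ (in the fixed orderings), where $\alpha_g \in \Phi_V = -\Delta_{w\sigma}$, and then conjugate factor by factor. Iterated application of $[U_\alpha(a), U_\beta(b)] = \prod_{i,j\ge 1} U_{i\alpha+j\beta}(c^{\alpha,\beta}_{ij}a^i b^j)$, with the usual convention that $[U_\alpha, U_{-\alpha}]$ produces a torus factor, expresses $y_0^{-1}\hat w y_0$ as a product of monomials of the form $c \cdot w_{f''}^N \prod_{j=1}^s (y_0)_{g_j}$, supported at the affine root $N\alpha_{f''} + \sum_j \alpha_{g_j}$ of depth $N n_{f''} + \sum_j n_{g_j}$, or in the torus when the root sum vanishes.

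To read off $(y_0^{-1}\hat w y_0)_{r-f}$ for $f \in C^i$, I would match affine roots: $r-f$ has vector part $-\alpha_f$ and depth $s_{m-i}$. Since $n_{f''} = s_{m-i} > 0$ for $f'' \in C^{m-i}$ and $n_{g_j} \ge 0$, the depth identity $N s_{m-i} + \sum_j n_{g_j} = s_{m-i}$ forces $N = 1$ and $n_{g_j} = 0$ for every $j$. Writing $f'' = r - f'$ with $f' \in C^i$, the vector-part identity reduces to $\alpha_f - \alpha_{f'} = -\sum_j \alpha_{g_j} \in \BZ_{\ge 0}\Delta_{w\sigma}$. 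Since $\tD_r$ consists of affine roots with vector parts in $\Delta_{w\sigma}$, this is precisely the condition $f \le f'$ in the total order on $C^i$ (extending the partial order generated by $\tD_r$). The trivial commutator chain ($s = 0$) yields $w_{r-f}$ with coefficient $c_{f,f} = 1$; longer chains give the off-diagonal $c_{f, f'}$. This establishes the first formula.

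The torus statements follow by the same method. A torus contribution requires $N \alpha_{f''} + \sum_j \alpha_{g_j} = 0$, and its depth is at least $N s_{m-i}$. For $0 < i < m/2$ one has $s_{m-i} > r/2$, so $N \ge 2$ is ruled out by the level-$r$ constraint, while $N = 1$ would force $\alpha_{f''} \in \BZ_{\ge 0}\Delta_{w\sigma}$, which is incompatible with $\alpha_{f''} \notin \Phi_M$ given that $M = G^{d-1}$ is a twisted Levi and using convexity of $w\sigma$; hence all torus slices vanish. For $i = 0$, linear-in-$w$ torus contributions at depth $r$ come from pairs $(f, f') \in C^0 \times C^0$ with $\alpha_f - \alpha_{f'} \in \BZ_{\ge 0}\Delta_{w\sigma}$, i.e.\ $f \le f'$; a single factor $(y_0)_f$ cancels the residual root through a commutator of type $[U_{\alpha_f}, U_{-\alpha_f}]$, and the standard rank-one normalization of this commutator then produces $\alpha_f^\vee\bigl(1 + \varpi^r d_{f, f'} (y_0)_f w_{r-f'}\bigr)$ with $d_{f, f} = 1$.

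The main obstacle will be the second torus claim: excluding a nonzero $\BT_r$-component for $0 < i < m/2$ is not a pure depth argument but requires using the interaction between $\Phi_V = -\Delta_{w\sigma}$ and the twisted-Levi root system $\Phi_M$, as well as carefully organizing the off-diagonal coefficients $c_{f, f'}$ and $d_{f, f'}$ so that the linear structure on each torus slice is consistent with the Chevalley relations.
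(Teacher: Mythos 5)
Your overall strategy---expand $y_0^{-1}\hat w y_0$ by iterated Chevalley commutators, track each monomial by its affine root and Moy--Prasad depth, and match against $r-f$ or against torus slices---is close in spirit to the paper's proof. The paper, however, first makes a useful simplification that you skip: by an induction on the number of affine roots in $-\tD \cap C^0$ needed to write $y_0$, it reduces to the case where $y_0 = y_{0,g}$ is a single root-group factor, and then factors
$y_0^{-1}\hat w y_0 = \hat w_{f_1}[\hat w_{f_1}^{-1},y_0^{-1}]\cdots \hat w_{f_s}[\hat w_{f_s}^{-1},y_0^{-1}]$.
For the case $m-i>m/2$ (i.e.\ $0\le i<m/2$) the paper's argument is then purely a commutativity observation: every factor $\hat w_{f_j}$ and every commutator $[\hat w_{f_j}^{-1},y_0^{-1}]$ lies at depth $\ge s_{m-i}>r/2$, so all these factors pairwise commute in $\BG_r$, and one reads off the claimed components by rearranging into the canonical order. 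For $m-i=m/2$ the same works modulo $\BG_r^r$, which is irrelevant for the first assertion because $r-f$ lies in $C^{m/2}$. Your proposal instead tries to rule out a torus contribution by a depth argument plus a structural claim.

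The problem is that your structural claim is false, and this produces a genuine gap in your treatment of the case $0<i<m/2$. You assert that with $N=1$ the equation $\alpha_{f''}+\sum_j\alpha_{g_j}=0$ forces $\alpha_{f''}\in\BZ_{\ge 0}\Delta_{w\sigma}$, and that this is ``incompatible with $\alpha_{f''}\notin\Phi_M$'' for the twisted Levi $M=G^{d-1}$ ``using convexity.'' Neither convexity nor the twisted-Levi property gives $\BZ_{\ge 0}\Delta_{w\sigma}\cap\Phi\subseteq\Phi_M$: for example, take $G=\SL_2$ with an unramified elliptic $T$ and $M=G^0=T$, so $\Delta_{w\sigma}=\{\alpha\}$ while $\Phi_M=\varnothing$; then $\alpha\in\BZ_{\ge 0}\Delta_{w\sigma}$ and $\alpha\notin\Phi_M$ are simultaneously satisfied. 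More generally $\Delta_{w\sigma}\setminus\Phi_M$ is typically nonempty (its cardinality is controlled by $\ell(w)$, not by $\Phi_M$), so the incompatibility you invoke does not exist. You did flag this case as ``the main obstacle'' at the end, so you were aware the argument there was shaky, but as written the proposed resolution does not work. The fix is to follow the paper's route: reduce to a single root-group factor $y_0=y_{0,g}$, factor through the $[\hat w_{f_j}^{-1},y_0^{-1}]$'s, and exploit that at depth $>r/2$ all these factors commute in $\BG_r$, which makes the extraction of the torus slice a term-by-term computation rather than something ruled out by a root-theoretic incompatibility.
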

\begin{proof}
% We may compatibly extend \eqref{} to an analogous parametrization of $\prod_{f} \BA_f \stackrel{\sim}{\to} \overline \BU_r \BT_r \BG_r^{0+}$, $f$ varying through $\tPhi_r^+ \cup \{f \colon f({\bf x}) = 0, \alpha_f \in \Phi_{\overline U}\} \cup \{0\}$, the product taken with respect to the fixed order.

Write $\hat w = \hat w_{f_1}\dots \hat w_{f_s}$ with $C^{m-i} = \{f_1<\dots <f_s\}$. It is clear that $y_0^{-1} \hat w y_0$ only depends on the image of $y_0$ in $\BG_0$. By induction on the number of roots in $\tD \cap C^0$ needed to write $y_0$, we may assume that $y_0 = y_{0,g}$ with some $g \in \tD \cap C^0$. We compute
\begin{equation}\label{eq:commutator_aux}
y_0^{-1} \hat w y_0 = \hat w_{f_1} [\hat w_{f_1}^{-1},y_0^{-1}] \hat w_{f_2} \dots \hat w_{f_s} [\hat w_{f_s}^{-1},y_0^{-1}].
\end{equation}
Moreover, $[\hat w_{f_j}^{-1},y_0^{-1}] = \prod_a (c_{a}\hat w_{f_j})_{f_j + ag}$, where the product is taken over all $a\in \BZ_{\geq 1}$ such that $f_j+ag \in \tPhi$ (and hence in $C^{m-i}$), and $c_a \in \overline \BF_q$ is a constant depending on $a,y_0$. If $m-i > m/2$, then all terms $\hat w_{f_j}$, $[\hat w_{f_{j'}}^{-1},y_0^{-1}]$ in \eqref{eq:commutator_aux} commute with each other (in $\BG_r$) and the result follows. If $m-i = m/2$, then the terms in \eqref{eq:commutator_aux} commute up to $\BG_r^r$, which may be ignored, as $r-f$ (from the statement of the lemma) lies in $C^{m/2}$.
\end{proof}

\begin{proposition} \label{product}
    Let $0 \le i \le m/2$. The map $g \mapsto (g_{C^{< m-i}}, g_{C^{m-i} \cap \tD_r})$ induces an isomorphism \[ Y_{y_0, r}^{i, m-i} \cong Y_{y_0, r}^{i, m-i-1} \times \BA_{C^{m-i} \cap \tD_r}.\] Moreover, for $g = (g', z) \in Y_{y_0, r}^{i, m-i-1} \times \BA_{C^{m-i} \cap \tD_r}$ we have
    \begin{itemize}
        \item if $0 \le i < m/2$, then \[\pi(g)_{\BT_r} = \sum_{f \le f' \in C^i \cap -\tD_r} \a_f^\vee(1 + \varpi^r c_{f, f'} \pi(g')_f z_{r-f'}) + \pi(g')_{\BT_r},\] where $c_{f, f'} \in \CO_\brk$ are some constants with $c_{f, f} = 1$;

        \item if $i = m/2$ and $g' \in \BM_r \cap Y_{y_0, r}^{i, m-i-1}$, then \[\pi(g)_{\BT_r} = \mu(z) + \pi(g')_{\BT_r},\] where $\mu: \BA_{C^{m/2} \cap \tD_r} \to \BT_r$ is a certain morphism.
    \end{itemize}
\end{proposition}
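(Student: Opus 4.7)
My plan is to establish the isomorphism via the factorization $g = g'\hat{w}$ with $g' \in \BG_r^{\Phi_{\aff,r}^+ \cap C^{<m-i}}$ and $w \in \BA_{\Phi_{\aff,r}^+ \cap C^{m-i}}$, for which $\pi(g) = \hat{w}^{-1} \pi(g') F(\hat{w})$, and then read off the torus component. First I would unpack the defining condition $\pi(g) \in \BT_r^{0+} \BG_r^{C^{>m-i}} \BV_r^{C^{\geq i}}$ at level $s_{m-i}$. Writing $\pi(g') = t \cdot u^{(m-i)} u^{>m-i} \cdot v$ according to its decomposition, the commutators $[\hat{w}, t]$, $[\hat{w}, u^{>m-i}]$ and (for $i > 0$) $[\hat{w}, v]$ all live strictly above level $s_{m-i}$ and so can be ignored. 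Hence, modulo $\BG_r^{C^{>m-i}}$, the relevant equation at level $s_{m-i}$ becomes linear in $w$ and takes the form $\phi(w) - F(w) - z = 0$ in the affine root components outside $\Phi_V$, where $z$ encodes the level-$s_{m-i}$ slice of $u^{(m-i)}$ (plus the conjugation by $y_0$) and $\phi$ is built from $\Ad$-action by lower level factors.

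Next I would invoke \Cref{uniformization} with $A$ equal to the set of vector parts of $\Phi_{\aff,r}^+ \cap C^{m-i}$, $x = w\sigma$ the convex element attached to $F$, and $B \subseteq -\D_x$ the vector parts contributing to $\phi$. The hypothesis that $x$ is convex is exactly what is needed; the matching $\Phi_V = -\D_x$ on vector parts (cf.\ \S\ref{notation:4}) identifies the admissible error space $H_{A \cap -\D_x}$ with the level-$s_{m-i}$ slice of $\BV_r^{C^{\geq i}}$. The proposition then yields that the natural projection $w \mapsto w_{\tD_r}$ is a bijection onto $\BA_{C^{m-i} \cap \tD_r}$, and this gives the asserted isomorphism $Y_{y_0,r}^{i,m-i} \cong Y_{y_0,r}^{i,m-i-1} \times \BA_{C^{m-i} \cap \tD_r}$. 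The inverse assigns to $(g',z)$ the unique completion $w$ with prescribed $\tD_r$-part.

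Finally, for the torus component I would compute $\pi(g)_{\BT_r} = (\hat{w}^{-1}\pi(g')F(\hat{w}))_{\BT_r}$ via \Cref{pr1}, tracking the pairs of factors whose levels sum to $r$. For $0 \le i < m/2$ the level-$s_{m-i}$ factor $\hat w$ can only reach the torus by pairing with level-$s_i$ factors; these are exactly the components $\pi(g')_f$ with $f \in C^i \cap -\tD_r$ (the $\Phi_V$-directions, since other $C^i \setminus \tPhi_M$ components vanish by $g' \in Y_{y_0,r}^{i,m-i-1}$), and the pairing with $z_{r-f'}$ is governed by the commutator formula in \Cref{pr1}, with the $\Ad(y_0)$-contribution supplied by \Cref{pr2}. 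In the boundary case $i = m/2$, both $\hat w$ and $F(\hat w)$ live at level $r/2$, so the contribution to $\BT_r^r$ is genuinely quadratic in $z$, producing the morphism $\mu$; the assumption $g' \in \BM_r$ ensures that no extra cross-terms from $\pi(g')$ appear. The main obstacle will be this last commutator bookkeeping, in particular identifying that only the $-\tD_r$ components of $\pi(g')$ at level $s_i$ (and not the full $C^i$-slice) can produce torus-level contributions, and verifying that the constants $c_{f,f'}, d_{f,f'}$ from \Cref{pr2} combine correctly with those from \Cref{pr1} to give the displayed formula.
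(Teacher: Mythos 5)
Your proposal is correct and follows essentially the same route as the paper: factorize $g = g'\hat w$, invoke \Cref{uniformization} (with $x = w\sigma$ and $\Phi_V$ identified with the vector parts of $-\tD_r$) for the product decomposition, and then compute the torus component via \Cref{pr1} and \Cref{pr2}. The paper does the commutator bookkeeping you flagged as the "main obstacle" by splitting into three cases ($i=0$, $0<i<m/2$, $i=m/2$) that track precisely how $y_0^{-1}\hat w y_0$, $h_+$, and $F(\hat w)$ commute modulo $\BG_r^r$, but your outline already contains all the correct ingredients and structure.
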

\begin{proof}
    By Proposition \ref{uniformization} we have an isomorphism \[ \psi: Y_{y_0, r}^{i, m-i} \stackrel{\sim}{\to} Y_{y_0, r}^{i, m-i-1} \times \BA_{C^{m-i} \cap \tD_r},\] and moreover, for $g = \psi\i(g', z)$ with $(g', z) \in Y_{y_0, r}^{i, m-i-1} \times \BA_{C^{m-i} \cap \tD_r}$ we have $g = g' \hat w$ for some $w \in \BA_{C^{m-i}}$ such that \[\tag{*} w_f = z_f \text{ for } f \in C^{m-i} \cap \tD_r.\]

    We set $h = \pi(g') \in \BT_r^{0+}\BG^{C^{> m-1-i}}_r\BV_r^{C^{\geq i}}$. By definition we have $y_0 = h_{C^0}$, $h_{C^{<i} \sm \tPhi_M} = 0$. Write $h = h_{C^0} h_+ = y_0 h_+$ with $h_+ = h_{\tPhi_r^{0+}}$. Then $h_{\BT_r} = (h_+)_{\BT_r}$.
    We have \[\pi_{\BT_r}(g) = (\hat w\i h F(\hat w))_{\BT_r} = (y_0 y_0\i \hat w y_0 h_+ F(\hat w))_{\BT_r} = (y_0\i \hat w y_0 h_+ F(\hat w))_{\BT_r}.\]

    Assume $i = 0$. By Lemma \ref{pr1} we have \begin{align*} \pi_{\BT_r}(g) = ((y_0\i \hat w y_0) h_+)_{\BT_r} = (y_0\i \hat w y_0)_{\BT_r} +  (h_+)_{\BT_r}. \end{align*} Hence the statement follows from Lemma \ref{pr2} and (*).

    Assume $0 < i < m/2$. Then $y_0\i \hat w y_0  \in \BG_r^{C^{\ge m-i}}$. Moreover, as $0 < i < m-i$, we have $h_+ F(\hat w) \in \BM_r^+ \BG_r^{C^{\ge i}}$ and $(h_+ F(\hat w))_f = h_f = (h_+)_f$ for $f \in C^i$. Applying Lemma \ref{pr1}, Lemma \ref{pr2} and (*) we deduce that \begin{align*}
        \pi_{\BT_r}(g)
        &= ((y_0\i \hat w y_0) (h_+ F(\hat w))_{\BT_r} \\
        &= \sum_{f \in C^i} \a_f^\vee(1 + \varpi^r (y_0\i \hat w y_0)_{r - f} (h_+ F(\hat w))_f) + (y_0\i \hat w y_0)_{\BT_r} + (h_+ F(\hat w))_{\BT_r} \\
        &= \sum_{f \le f' \in C^i \cap -\tD_r} \a_f^\vee(1 + \varpi^r c_{f, f'} h_f w_{r-f'}) + (h_+)_{\BT_r} \\
        &=\sum_{f \le f' \in C^i \cap -\tD_r} \a_f^\vee (1 + \varpi^r c_{f, f'} \pi(g')_f z_{r-f'} ) + \pi_{T, r}(g'),
    \end{align*} where the third equality follows from that $h_f = 0$ for $f \in C^i \sm -\tD_r$.

    Finally, assume that $i = m/2 \in \BZ$ and we may choose $g' \in \BM_r$. Then $h \in \BM_r$ and $h_{C^{\le m/2} \sm \tPhi_M} = 0$. By Proposition \ref{uniformization}, $w \in \BA_{C^{m/2}}$ only depends on $z \in \BA_{C^{m/2} \cap \tD_r}$ (and the fixed element $y_0$). We define $\mu(z) = (y_0\i \hat w y_0 F(\hat w))_{\BT_r}$. Noticing that $y_0\i \hat w y_0 F(\hat w) \in \BG_r^{C^{\ge m/2}}$, $h_+ \in \BM_r^+$ and $[h_+\i, F(\hat w)\i] \in \BG_r^{C^{>m/2} \sm \tPhi_M}$, we deduce by Lemma \ref{pr1} that \begin{align*}
        \pi_{\BT_r}(g)
        &= (y_0\i \hat w y_0 F(\hat w) h_+ [h_+\i, F(\hat w)\i]))_{\BT_r} \\
        &= (y_0\i \hat w y_0 F(\hat w))_{\BT_r} + (h_+ [h_+\i, F(\hat w)\i])_{\BT_r} \\
        &= \mu(z) + (h_+)_{\BT_r} + ([h_+\i, F(\hat w)\i])_{\BT_r} \\
        &= \mu(z) + (h_+)_{\BT_r} \\
        &= \mu(z) + \pi_{\BT_r}(g').
    \end{align*} The proof is finished.
\end{proof}

We have a decomposition \[\BV_r \sm \BK_{\phi,r} = \BV_r' \sqcup \BV_r'',\] where $\BV_r'' = \{g \in \BV_r \sm \BK_{\phi,r}; g_{C^{< m/2} \sm \tPhi_M} = 0\}$ and $\BV_r' = (\BV_r \sm \BK_{\phi,r}) \sm \BV_r''$. This induces a natural decomposition $\d_r\i(\bar g_0) = \d_r\i(\bar g_0)' \sqcup \d_r\i(\bar g_0)''$.

\begin{proposition} \label{criterion}
    Let $\pi: X \times \BG_a \to \BT_r$ be a morphism. Suppose that for each $x \in X$ the pull-back of $\CL_\phi$ via the map $z \mapsto \pi(x, z)$ is isomorphic to a nontrivial multiplicative local system on $\BG_a$. Then $R\Gamma_c(X \times \BG_a, \pi^*\CL_\phi) = 0$.
\end{proposition}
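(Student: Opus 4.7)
The plan is to reduce to a fiberwise computation via the second projection $p \colon X \times \BG_a \to X$, and then to invoke the vanishing of compactly supported cohomology of nontrivial Artin--Schreier sheaves on $\BG_a$.

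First, by the composition of derived functors $R\Gamma_c(X \times \BG_a, -) = R\Gamma_c(X, -) \circ Rp_!$, it is enough to establish that $Rp_!\, \pi^*\CL_\phi = 0$. By proper base change applied to the cartesian square
\[
\xymatrix{
\{x\} \times \BG_a \ar[r] \ar[d] & X \times \BG_a \ar[d]^{p} \\
\Spec \ov\BF_q \ar[r]^-{x} & X,
}
\]
the stalk of $Rp_!\, \pi^*\CL_\phi$ at a geometric point $x \in X$ is computed as
\[
(Rp_!\, \pi^*\CL_\phi)_x \cong R\Gamma_c\bigl(\BG_a, \CF_x\bigr),
\]
where $\CF_x$ is the pullback of $\CL_\phi$ along the map $z \mapsto \pi(x, z)$.

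Second, by hypothesis $\CF_x$ is a nontrivial multiplicative local system on $\BG_a$. Such local systems are precisely the Artin--Schreier sheaves $\CL_\psi$ associated to additive characters $\psi \colon \BG_a(\ov\BF_q) \to \ov\BQ_\ell^\times$, and for every nontrivial $\psi$ a standard computation (see, e.g., Deligne's \emph{Sommes trigonom\'etriques} in SGA 4$\frac{1}{2}$) gives $R\Gamma_c(\BG_a, \CL_\psi) = 0$. Hence all stalks of $Rp_!\, \pi^*\CL_\phi$ vanish, so $Rp_!\, \pi^*\CL_\phi = 0$ and the claim follows.

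There is essentially no obstacle: the argument is the standard ``fiberwise Artin--Schreier vanishing'' technique, and the only input beyond proper base change is the well-known vanishing of compactly supported cohomology of a nontrivial additive character on $\BG_a$. The only thing one should check is that the multiplicative local systems arising on the fibers are indeed Artin--Schreier sheaves in the sense required, which is immediate from the fact that $\CL_\phi$ is a character local system on the commutative group scheme $\BT_r$ and the composition $z \mapsto \pi(x,z)$ is a morphism of varieties from $\BG_a$ to $\BT_r$ whose pullback preserves the multiplicative structure (up to the choice of a trivialization along $0$).
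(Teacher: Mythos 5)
Your proof is correct and follows essentially the same route as the paper: both apply proper base change along the projection $X \times \BG_a \to X$ to reduce to a fiberwise statement, and both invoke the vanishing of compactly supported cohomology of a nontrivial multiplicative (Artin--Schreier) local system on $\BG_a$. The only difference is the citation for this last fact — the paper cites \cite[Lemma 9.4]{Boyarchenko_10}, while you cite the classical source in SGA $4\frac{1}{2}$ — which is immaterial.
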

\begin{proof}
Let $x \colon \Spec \ov\BF_q \to X$ be a point, and let $x' \colon \BG_a \to X \times \BG_a$ be the base changed map. Denote by $f \colon X \times \BG_a \to X$ the natural projection and by $f_x$ its pullback along $x$. Proper base change implies that $x^\ast f_!\pi^\ast \CL_\phi \cong f_{x!}x^{\prime\ast}\pi^\ast \CL_\phi$, which is zero by \cite[Lemma 9.4]{Boyarchenko_10}. As this holds for any geometric point $x \in X$, we deduce $f_!\pi^\ast \CL_\phi = 0$. Thus $R\Gamma_c(X\times \BG_a,\pi^\ast\CL_\phi) \cong R\Gamma_c(X,f_!\pi^\ast\CL_\phi) = 0$.
\end{proof}

\begin{proposition} \label{vanish-prime}
    We have $R\Gamma_c(\d_r\i(\bar g_0)', \pi_{\BT_r}^* \CL_\phi) = 0$.
\end{proposition}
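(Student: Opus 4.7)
The strategy is to stratify $\delta_r^{-1}(\bar g_0)'$ into locally closed pieces on which \Cref{criterion} applies, by exhibiting an affine line along which the pullback character $\phi\circ\pi_{\BT_r}$ is nontrivial. The two key inputs are the explicit formula for $\pi_{\BT_r}$ in the case $0\le i<m/2$ of \Cref{product} and the $(G^{d-1}\!:\!G)$-genericity of the Howe component $\phi_{d-1}$.

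\textbf{Stratification and iterated peeling.} Since $\pi(g)\in \BT_r^{0+}\BV_r$ is supported on $-\tD_r$, for each $g\in \delta_r^{-1}(\bar g_0)'$ there is a smallest pair $(i_0,f_0)$, with $0\le i_0<m/2$ and $f_0\in C^{i_0}\cap(-\tD_r)$ minimal in the fixed total order, such that $\pi(g)_{f_0}\neq 0$; necessarily $\a_{f_0}\in \Phi\setminus\Phi_M$. Let $A_{i_0,f_0}$ be the resulting locally closed stratum, so that $\delta_r^{-1}(\bar g_0)'=\bigsqcup_{(i_0,f_0)}A_{i_0,f_0}$; it suffices to prove vanishing on each $A_{i_0,f_0}$. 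On this stratum, iterate \Cref{product} $(i_0+1)$ times to peel off the top slices $C^m,C^{m-1},\dots,C^{m-i_0}$: at the $i'$-th step ($i'=0,1,\dots,i_0$) we sit in $Y_{y_0,r}^{i',m-i'}$ thanks to $\pi(g')_{C^{<i'}\setminus\tPhi_M}=0$, and the proposition introduces a free factor $\BA_{C^{m-i'}\cap\tD_r}$. Because conjugation by an element of a top slice only affects components at depth $\ge r+$, the lower-level components of $\pi$ are preserved and the defining conditions of $A_{i_0,f_0}$ descend through each peel, yielding
\[
A_{i_0,f_0}\;\cong\;\tilde A_{i_0,f_0}\times\prod_{i'=0}^{i_0}\BA_{C^{m-i'}\cap\tD_r}
\]
for a suitable locally closed $\tilde A_{i_0,f_0}$.

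\textbf{Character analysis and conclusion.} By the formula in case $0\le i<m/2$ of \Cref{product}, the peels with $i'<i_0$ leave $\pi_{\BT_r}$ unchanged on $A_{i_0,f_0}$, because $\pi(g')_f=0$ for $f\in C^{i'}\cap(-\tD_r)$ by minimality of $i_0$; the final peel $i'=i_0$ produces
\[
\pi(g)_{\BT_r}\;=\;\sum_{f\le f'\in C^{i_0}\cap -\tD_r}\a_f^\vee\bigl(1+\varpi^r c_{f,f'}\pi(g')_f z_{r-f'}\bigr)+\pi(g')_{\BT_r}.
\]
Isolate the coordinate $z_{r-f_0}\in \BA_{r-f_0}\cong \BG_a$, valid since $f_0\in -\tD_r$ forces $r-f_0\in C^{m-i_0}\cap\tD_r$, and write $A_{i_0,f_0}\cong X\times\BG_a$ by gathering all remaining coordinates into $X$. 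The minimality of $f_0$ (so that $\pi(g')_f=0$ for $f<f_0$ in $C^{i_0}\cap-\tD_r$) kills all $f\neq f_0$ contributions with $f'=f_0$, leaving
\[
\pi(g)_{\BT_r}\;=\;\a_{f_0}^\vee\bigl(1+\varpi^r\pi(g')_{f_0}z_{r-f_0}\bigr)+(\text{constant in }z_{r-f_0}).
\]
For any $x\in X$ the pullback of $\CL_\phi$ along the line $z\mapsto \pi_{\BT_r}(x,z)$ is thus the local system associated to $z\mapsto \phi\bigl(\a_{f_0}^\vee(1+\varpi^r\pi(g')_{f_0}z)\bigr)$. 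The restriction of $\phi$ to the slice $\BT_r^r/\BT_r^{r+}$ agrees with $\phi_{d-1}$; since $\a_{f_0}\in\Phi\setminus\Phi_M$ and $\pi(g')_{f_0}\neq 0$, the $(G^{d-1}\!:\!G)$-genericity of $\phi_{d-1}$ forces this character to be nontrivial. \Cref{criterion} then yields $R\Gamma_c(A_{i_0,f_0},\pi_{\BT_r}^{\ast}\CL_\phi)=0$, and summing over strata completes the proof.

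\textbf{Main obstacle.} The chief technical bookkeeping is ensuring that the iterated peeling is compatible with the stratification: one must verify that the $\pi(g')_f$ appearing in the formula of \Cref{product} at the final peel are indeed the corresponding components of the original $\pi(g)$ (so that the minimality conditions on $A_{i_0,f_0}$ translate correctly), and that the peels with $i'<i_0$ contribute only trivially to $\pi_{\BT_r}$. Once this is established, the genericity hypothesis and \Cref{criterion} combine cleanly to give the desired vanishing.
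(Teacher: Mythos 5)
Your proposal is correct and follows essentially the same route as the paper: stratify $\delta_r^{-1}(\bar g_0)'$ by the smallest $f_0\in C^{<m/2}\cap(-\tD_r)\setminus\tPhi_M$ where $\pi(g)_{f_0}\neq 0$ (the paper's $Y_{y_0,r}^{f,m}$), peel off the top slices $C^m,\dots,C^{m-i_0}$ via Proposition~\ref{product}, observe that by minimality only the $(f_0,f_0)$-term of the resulting formula depends on $z_{r-f_0}$, and invoke genericity together with Proposition~\ref{criterion}. The only cosmetic difference is that where the paper records the remaining terms via a congruence modulo $(\BT\cap\BM_\der)_r^r$, you simply isolate the $z_{r-f_0}$-line and note the other summands are constant along it, which is arguably the cleaner formulation; otherwise the decompositions, the key lemma, and the genericity input coincide.
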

\begin{proof}
    Let $m/2 \le j \le m$, $0 \le i < m/2$ and $f \in (C^{\le i} \cap -\tD_r) \sm \tPhi_M$. We define \[Y_{y_0, r}^{f, j} = \{g \in Y_{y_0, r}^{0, j}; \pi(g)_f \neq 0, \pi(g)_{C^{<f} \sm \tPhi_M} = 0\}.\] Then $\d_r\i(\bar g_0)'$ is a disjoint union of locally closed subsets $Y_{y_0, r}^{f, m}$. It suffices to show $R\Gamma_c(Y_{y_0, r}^{f, m}, \pi_{\BT_r}^* \CL_\phi) = 0$.

    By Proposition \ref{product}, we have \[Y_{y_0, r}^{f, m} \cong Y_{y_0, r}^{f, m-1} \times \BA_{C^m \cap \tD_r} \cong \cdots \cong Y_{y_0, r}^{f, m-i-1} \times \BA_{C^{\ge m-i} \cap \tD_r}. \] Moreover, for $g = (g', z) \in Y_{y_0, r}^{f, m-i-1} \times \BA_{C^{\ge m-i} \cap \tD_r}$ we have $\pi(g')_{C^{< f} \sm \tPhi_M} = 0$ and hence \[\pi(g)_{\BT_r} \equiv \a_f^\vee(1 + \varpi^r \pi(g')_f z_{r-f}) + \pi(g')_{\BT_r} \mod (\BT \cap \BM_\der)_r^r.\]
    % Here $h: Y_{y_0, r}^{f, m} \to \BT_r$ is a morphism given by \begin{align*} h(g) &=  \sum_{f' \le f'' \in C^i \cap -\tD_r} \a_{f'}^\vee (1 + \varpi^r c_{f'', f'} \pi(g')_{f'} z_{r-f''}) + \pi(g')_{\BT_r} \end{align*} where $c_{f, f'} \in \CO_\brk$ are some constants with $c_{f', f'} = 1$. As $\pi(g')_{C^{<f} \sm \tPhi_M} = 0$, we deduce that \[h(g) - \a_f^\vee(1 + \varpi^r \pi(g')_f z_{r-f}) - \eta(g', z_{(C^{\ge m-i} \cap \tD_r) \sm \{r-f\}}) \in (T \cap M_\der)_r^r.\] Here $\eta: Y_{y_0, r}^{f, m-i-1} \times \BA_{(C^{\ge m-i} \cap \tD_r) \sm \{r-f\}} \to \BT_r$ is certain morphism.
    Since the restriction of the character $\phi$ to $((\BT \cap \BM_\der)_r^r)^F$ is trivial, it follows that the pull-back of $\CL_\phi$ over $\BA_{C^{\ge m-i} \cap \tD_r}$ under the morphism $z \mapsto \pi_{\BT_r}(g', z)$ is isomorphic to the pull-back of $\CL_\phi$ under the morphism $z \mapsto \a_f^\vee (1 + \varpi^r \pi(g')_f z_{r-f})$, which is a nontrivial multiplicative local system. Thus the statement follows from Proposition \ref{criterion}.
\end{proof}

\subsection{Proof of Proposition \ref{vanish}} %5.8

We argue by induction on $d$ and the semisimple rank of $G$. If $d = 0$ or $G$ is a torus, then $V_r = \varnothing$ and the statement is trivial.

Suppose that $d \ge 1$ and hence $M = G^{d-1}$ is a proper Levi subgroup. In view of Proposition \ref{vanish-prime}, it suffices to show $R\Gamma_c(\d_r\i(\bar g_0)'', \pi_{\BT_r}^*\CL_\phi) = 0$. For $m/2-1 \le j \le m$ we define
\[
Y_{y_0, r}^{\prime\prime, j} = \{g \in Y_{y_0,r}^{0,j} \colon \pi(g)_{C^{<m/2} \sm \tPhi_M} = 0 \}
\]
Then $\d_r\i(\bar g_0)'' = Y_{y_0, r}^{\prime\prime, m}$. By Proposition \ref{product}, we have \[Y_{y_0, r}^{\prime\prime, m} \cong Y_{y_0, r}^{\prime\prime, m/2-1} \times \BA_{C^{\ge m/2} \cap \tD}.\] Moreover, for $(g', z) \in Y_{y_0, r}^{\prime\prime, m/2-1} \times \BA_{C^{\ge m/2} \cap \tD}$ we have $\pi_{\BT_r}(g) = \mu(z) + \pi_{\BT_r}(g')$. As $\CL_\phi$ is multiplicative, by K\"{u}nneth formula it suffices to show $R\Gamma_c(Y_{y_0, r}^{\prime\prime, m/2-1}, \pi_{\BT_r}^*\CL_\phi) = 0$.

Indeed, using the natural embedding $\BM_{r-}^{0+}/\BT_{r-}^{0+} \hookrightarrow \BG_r^{0+}/\BT_r^{0+} \BG_r^{C^{\ge m/2}}$ we have \[Y_{y_0, r}^{\prime\prime, m/2-1} = \sqcup_{g \in (\BG_r^{0+})^{\ad(y_0) \circ F} / (\BM_r^{0+} \BG_r^{C^{\ge m/2}})^{\ad(y_0) \circ F}} g Y_{y_0, r-}^M,\] where $Y_{y_0, r-}^M = \{g \in \BM_{r-}^{0+} / \BT_{r-}^{0+}; \pi(g) \in \BT_{r-}^{0+} (\BM_{r-} \cap  \BV_{r-})\}$. Now the statement follows by induction hypothesis that $R\Gamma_c(Y_{y_0, r-}^M, \pi_{T_{r-}}^*\CL_\phi) = 0$. This proves \Cref{vanish} and hence \Cref{thm:concentrates_Howe}.

\section{Explicit description of $R\Gamma_c(X_r^\flat,\Lambda)[\phi]$}\label{sec:comp_with_CS_variety}

We continue to work with notation from \S\ref{sec:DeligneLusztig}. We thus have a  character $\phi \colon T^F \to \Lambda^\times$ of ${\rm depth}(\phi) \leq r$, which we assume to admit a Howe factorization with corresponding subgroups $\BK_\phi, \BK_\phi^+$, etc. We write $\phi$ for the character of $\BT^F$ induced by $\phi$.

\Cref{thm:concentrates_Howe} shows that $R\Gamma_c(X_r,\Lambda)[\phi] = R\Gamma_c(X_r^\flat,\Lambda)[\phi]$. Next, we relate the cohomology of $X_r^\flat$ with the cohomology of a different variety.

Define the subgroup
\[
\BI_{\phi,U, r} = (\BK_{\phi, r} \cap \BU_r)(\BE_{\phi, r} \cap \BT_r)(\BK_{\phi, r}^+ \cap \ov\BU_r).
\]
of $\BK_\phi$ and the subvariety
\begin{align*}
Z_{\phi,U,r} &= \{g \in \BG_r \colon g^{-1}F(g) \in F\BI_{\phi,U, r} \} \\
\end{align*}
acted on by $\BG^F \times \BT^F$ by left and right multiplication. The variety $Z_{\phi,U,r}$ was first considered in a special case by Chen and Stasinski in \cite{ChenS_17}, and later (in general) by the second author in \cite{Nie_24}.
% Clearly, $Z_{\phi,U,r} = \coprod_{\gamma\in \BG^F/\BK_\phi^F} \gamma Z_{\phi,U,r}^\BK$, so that
% \[
% R\Gamma_c(Z_{\phi,U,r},\ov\BQ_\ell)[\phi] = \ind_{\BK_\phi^F}^{\BG^F} R\Gamma_c(Z_{\phi,U,r}^\BK)[\phi].
% \]
The following result gives a degreewise comparison of the cohomologies of $X_r^\flat$ and $Z_{\phi,U,r}$. This improves over \cite[Theorem 4.1]{Nie_24}, which only compares the ($\BG^F$-equivariant) Euler characteristics.
\begin{proposition}\label{prop:comp_XandZ}
We have a $\BG^F$-equivariant isomorphism
\[R\Gamma_c(X_r^\flat,\Lambda)[\phi] \cong R\Gamma_c(Z_{\phi,U,r},\Lambda)[\phi][2m],
\]
where $m = \dim (\ov \BU_r \cap \BK_{\phi, r}^+) (F\BU_r \cap \BU_r \cap \BK_{\phi, r}) + \dim(\BT_r \cap \BE_{\phi, r})$.
\end{proposition}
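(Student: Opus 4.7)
My plan is to construct a $\BG^F \times \BT^F$-equivariant affine bundle morphism
\[
\pi \colon Z_{\phi,U,r} \longrightarrow X_r^\flat
\]
of relative dimension $m$. Since any such bundle satisfies $R\pi_! \cool \cong \cool[-2m](-m)$ and cohomology with coefficients in $\CL_\phi$ is compatible with proper pushforward, this immediately yields $R\Gamma_c(X_r^\flat,\cool)[\phi] \cong R\Gamma_c(Z_{\phi,U,r},\cool)[\phi][2m]$, up to the Tate twist suppressed in the statement.

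First I would establish the Iwahori-type decomposition
\[
F\BI_{\phi,U} = (F\BU \cap \BU \cap \BK_\phi) \cdot (\ov\BU \cap F\BU \cap \BK_\phi) \cdot (\BT \cap \BE_\phi) \cdot (F\ov\BU \cap \BK_\phi^+),
\]
realizing $F\BI_{\phi,U}$ as a product of four factors sitting in pairwise distinct root/torus directions. This shows that $X_r^\flat \hookrightarrow Z_{\phi,U,r}$ as the closed subvariety where the first, third, and fourth factors of $L(g) := g^{-1}F(g)$ are trivial, and yields the dimension count $\dim Z_{\phi,U,r} - \dim X_r^\flat = m$, using that $\dim(F\ov\BU \cap \BK_\phi^+) = \dim(\ov\BU \cap \BK_\phi^+)$ since $\BK_\phi^+$ is $F$-stable and $F$ permutes affine roots.

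The heart of the argument is then to construct, for $g \in Z_{\phi,U,r}$ with $L(g) = a'(g) \cdot b(g) \cdot c(g) \cdot d(g)$ in the above factorization, a canonical element $\eta(g) \in \BI_{\phi,U}$ such that $\eta(g)^{-1} L(g) F(\eta(g)) = b(g) \in \ov\BU \cap F\BU \cap \BK_\phi$, and to set $\pi(g) := g\eta(g) \in X_r^\flat$. Equivalently, one studies the right multiplication action of $\BI_{\phi,U}$ on $Z_{\phi,U,r}$, which covers the $F$-twisted action $h \cdot x = h^{-1} x F(h)$ of $\BI_{\phi,U}$ on $F\BI_{\phi,U}$, and shows that every such orbit meets $X_r^\flat$ in a unique point. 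Solving the twisted Lang-type equation $\eta^{-1} L(g) F(\eta) = b(g)$ proceeds inductively over the Moy--Prasad filtration of $\BI_{\phi,U}$: at each depth step, commutator relations inside the parahoric drop into strictly deeper filtration, allowing one to peel off the factors $a'$, $c$, and $d$ successively. The normality of $\BE_\phi$ in $\BK_\phi$ (a consequence of the genericity of the intermediate characters $\phi_i$ in the Howe factorization) ensures these commutator corrections remain inside $\BI_{\phi,U}$, so the inductive procedure closes up after finitely many steps.

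Once $\pi$ is constructed, verifying that it is an affine bundle of rank $m$ is straightforward: its fiber over $\tilde g \in X_r^\flat$ is identified with $(F\BU \cap \BU \cap \BK_\phi) \times (\BT \cap \BE_\phi) \times (F\ov\BU \cap \BK_\phi^+)$ via the decomposition of $L(g)$, and the total fiber dimension matches $m$. The $\BG^F \times \BT^F$-equivariance follows since $\BE_\phi \cap \BT$ is $F$-stable while $\BT$ normalizes all factors of $\BI_{\phi,U}$. The hard part will be the inductive construction of $\eta(g)$: verifying algebraic dependence on $g$, termination of the induction across all jumps of the Moy--Prasad filtration, and that the three types of extra factors in $F\BI_{\phi,U}$ each contribute the expected cohomological shift with no extra twists beyond what the Iwahori decomposition directly accounts for.
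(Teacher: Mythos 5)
Your plan begins in the right spirit --- the paper does indeed compare $Z_{\phi,U,r}$ to $X_r^\flat$ via a product/retraction argument and a twisted-Lang-type equation --- but the single morphism $\pi \colon Z_{\phi,U,r}\to X_r^\flat$ you propose does not exist, and this is not a technicality but the whole reason the paper's proof is a two-step argument rather than a one-step one.

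The obstruction sits in the $\BT\cap\BE_\phi$-direction. In your factorization $L(g)=a'(g)b(g)c(g)d(g)$ with $c(g)\in\BT\cap\BE_\phi$, killing $c(g)$ by an element $\eta=u\, t\, v\in\BI_{\phi,U}$ forces the torus component $t\in\BT_\phi:=\BT\cap\BE_\phi$ to satisfy the twisted Lang equation $t^{-1}c(g)F(t)=1$, i.e.\ $t^{-1}F(t)=c(g)^{-1}$ up to lower-order corrections. The Lang map on $\BT_\phi$ is an \'etale $\BT_\phi^F$-torsor, \emph{not} an isomorphism: even though $\BT_\phi$ is an affine space, the map $v\mapsto F(v)-v$ has kernel $\BT_\phi^F$, which is finite and typically nontrivial. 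So there is no canonical $\eta(g)$, only a $\BT_\phi^F$-torsor of solutions, and correspondingly each $\BI_{\phi,U}$-orbit meets $X_r^\flat$ in a $\BT_\phi^F$-torsor of points rather than a single point (indeed for $g\in X_r^\flat$ and $s\in\BT_\phi^F$ one checks $gs\in X_r^\flat$ again). Your claimed affine-bundle retraction therefore breaks down precisely at the torus factor. This is why the cohomological identity for that factor must use a representation-theoretic input --- $\phi|_{\BT_\phi^F}=1$ --- which cannot be absorbed into a geometric affine-bundle picture; it only materializes after taking $\phi$-isotypic parts.

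What the paper actually does, in \Cref{sec:comp_with_CS_variety}: (i) reduce to the connected-component version over $\BK_\phi$; (ii) introduce the intermediate variety $X_r^{\natural,\BK}$ where $L(g)$ is allowed a $\BT_\phi$-component; (iii) \Cref{prop:affine_bundle} is the genuine product decomposition $X_r^{\natural,\BK}\times(\ov\BU\cap\BK_\phi^+)(F\BU\cap\BU\cap\BK_\phi)\cong Z_{\phi,U,r}^{\BK}$, proved by the unipotent Lang/Steinberg argument you envision, and working precisely because the extra factor $\CA_r$ contains no torus; (iv) \Cref{lm:relationXflat_Xnatural} then converts between $X_r^{\natural,\BK}$ and $X_r^{\flat,\BK}$ by passing to quotients: the $\BT_\phi$-action on $X_r^{\natural,\BK}$ is free, $X_r^{\flat,\BK}/\BT_\phi^F\cong X_r^{\natural,\BK}/\BT_\phi$, and the finite quotient is then undone using $\phi|_{\BT_\phi^F}=1$. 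In other words, the torus part of your $m$ comes from a free-action plus $\phi$-triviality argument, not from an affine bundle of schemes. If you want to salvage your one-morphism picture, you would need to target $X_r^\flat/\BT_\phi^F$ rather than $X_r^\flat$ and separately invoke the vanishing/triviality of $\phi$ on $\BT_\phi^F$; at that point you have in effect reproduced the paper's two steps.

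There is a secondary soft spot you flagged yourself --- verifying the Iwahori-type factorization of $F\BI_{\phi,U}$ and the normality statements across all Moy--Prasad jumps --- but the paper's induction on $r$ using \Cref{Steinberg} and \Cref{uniformization} handles exactly that (and crucially only for the unipotent factors), so that part of your plan aligns with the paper. The real missing ingredient is the representation-theoretic handling of the $\BT_\phi$-direction.
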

This follows directly from \Cref{lm:relationXflat_Xnatural} and \Cref{prop:affine_bundle} below.

\subsection{Proof of \Cref{prop:comp_XandZ}}
Consider
\begin{align*}
X_r^{\flat,\BK} &= X_r^{\flat} \cap \BK_{\phi, r}\\
Z_{\phi,U,r}^\BK &= Z_{\phi,U,r} \cap \BK_{\phi, r},
\end{align*}
both admitting $\BK_{\phi, r}^F \times \BT_r^F$-actions by left/right multiplication. It is immediate that $Z_{\phi,U,r} = \coprod_{\gamma\in \BG_r^F/\BK_{\phi,r}^F} \gamma Z_{\phi,U,r}^\BK$, so that
\begin{equation}\label{eq:Z_is_induced}
R\Gamma_c(Z_{\phi,U,r},\Lambda)[\phi] = \ind_{\BK_\phi^F}^{\BG^F} R\Gamma_c(Z_{\phi,U,r}^\BK, \Lambda)[\phi],
\end{equation}
and the same formulas hold for $X_r^\flat$. To prove \Cref{prop:comp_XandZ} it thus suffices to show $R\Gamma_c(X_r^{\flat,\BK},\Lambda)[\phi] \cong R\Gamma_c(Z_{\phi,U,r}^{\BK},\Lambda)[\phi][2m]$.

Let $\BT_{\phi, r} = \BE_{\phi, r} \cap \BT_r$. Define
\[ X_r^{\natural,\BK} = \{g \in \BK_{\phi, r}; g\i F(g) \in \BT_{\phi, r} (F\BU_r \cap \ov \BU_r \cap \BK_{\phi, r})\}.
\]

\begin{lemma}\label{lm:relationXflat_Xnatural}
We have a natural $\BK_\phi^F$-equivariant isomorphism $R\Gamma_c(X_r^{\flat,\BK}, \Lambda)[\phi] \cong R\Gamma_c(X_r^{\natural,\BK}, \Lambda)[\phi][2\dim\BT_{\phi, r}] $.
\end{lemma}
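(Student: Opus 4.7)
The plan is to realise $X_r^{\natural,\BK}$ as a quotient of $X_r^{\flat,\BK} \times \BT_\phi$ by an étale $\BT_\phi^F$-Galois action, and then compute via K\"unneth, exploiting that $\phi|_{\BT_\phi^F} = 1$ and that $\BT_\phi$ is (a perfection of) an affine space.

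\smallskip

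\emph{Preparations.} First I would record two structural facts about $\BT_\phi = \BT \cap \BE_\phi$. (a) It is $F$-stable, smooth and connected, and isomorphic to (perfected) affine space of dimension $\dim \BT_\phi$; this follows from the explicit definition of $\BE_\phi$ as a product of higher Moy--Prasad pieces of derived subgroups, intersected with $\BT$. Consequently Lang's theorem yields that $L_{\BT_\phi}\colon \BT_\phi \to \BT_\phi$, $t \mapsto t^{-1}F(t)$, is a surjective étale $\BT_\phi^F$-torsor. (b) The character $\phi$ is trivial on $\BT_\phi^F$; this is a defining feature of $\BE_\phi$ in a Howe factorisation (compare \cite[\S3]{Nie_24}).

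\smallskip

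\emph{A canonical fibration.} Using the product decomposition $\BT_\phi \cdot (\BK_\phi \cap \ov\BU \cap F\BU) = \BT_\phi \times (\BK_\phi \cap \ov\BU \cap F\BU)$ from \S\ref{sec:some_setup}, I define
\[
\pi \colon X_r^{\natural,\BK} \to \BT_\phi, \qquad g \mapsto \bigl(\text{$\BT_\phi$-component of } g^{-1}F(g)\bigr),
\]
so that $\pi^{-1}(1) = X_r^{\flat,\BK}$. Since $\BT_\phi \subseteq \BT$ normalises $\ov\BU \cap F\BU \cap \BK_\phi$, the abelian computation $s^{-1}t_0 u F(s) = L_{\BT_\phi}(s) \cdot t_0 \cdot F(s)^{-1}uF(s)$ gives the key $\BT_\phi$-equivariance $\pi(gs) = L_{\BT_\phi}(s) \cdot \pi(g)$ for all $g \in X_r^{\natural,\BK}$ and $s \in \BT_\phi$. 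From this one reads off that
\[
X_r^{\flat,\BK} \times \BT_\phi \xrightarrow{\,\sim\,} X_r^{\natural,\BK} \times_{\BT_\phi,\, L_{\BT_\phi}} \BT_\phi, \qquad (g, s) \mapsto (gs, s),
\]
is an isomorphism. The right-hand side is the pullback of $X_r^{\natural,\BK}$ along the Lang torsor, hence an étale $\BT_\phi^F$-Galois cover of $X_r^{\natural,\BK}$; transported to $X_r^{\flat,\BK} \times \BT_\phi$, the Galois action is $t \cdot (g, s) = (gt^{-1}, st)$, while the residual right $\BT^F$-action and the left $\BK_\phi^F$-action are via the first factor only.

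\smallskip

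\emph{Cohomology computation.} From the Galois cover and K\"unneth I obtain
\[
R\Gamma_c(X_r^{\natural,\BK}, \ov\BQ_\ell)[\phi] \cong \bigl(R\Gamma_c(X_r^{\flat,\BK}, \ov\BQ_\ell)[\phi] \otimes R\Gamma_c(\BT_\phi, \ov\BQ_\ell)\bigr)^{\BT_\phi^F}.
\]
By (b), the $\BT_\phi^F$-action on the first tensor factor is via $\phi|_{\BT_\phi^F} = 1$ and hence trivial. On the second factor, $\BT_\phi \cong \BA^{\dim \BT_\phi}$ gives $R\Gamma_c(\BT_\phi, \ov\BQ_\ell) \simeq \ov\BQ_\ell[-2\dim\BT_\phi](-\dim\BT_\phi)$, on whose unique non-zero cohomology class the finite group of translations $\BT_\phi^F$ acts trivially. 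Therefore taking $\BT_\phi^F$-invariants preserves the tensor product, and the shift contributed by $R\Gamma_c(\BT_\phi, \ov\BQ_\ell)$ yields the desired $[2\dim \BT_\phi]$ shift, with $\BK_\phi^F$-equivariance automatic since every construction commutes with the left $\BK_\phi^F$-action.

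\smallskip

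\emph{Main obstacle.} The most delicate part is the preparatory step: identifying $\BT_\phi = \BT \cap \BE_\phi$ with a (perfected) affine space of the correct dimension and checking $\phi|_{\BT_\phi^F} = 1$. Both assertions are essentially definitional given the construction of $\BE_\phi$ from the Howe factorisation of $\phi$, but require an unwinding of the explicit Moy--Prasad product formula. Once these are in hand, the rest of the argument is a standard Lang-torsor plus K\"unneth computation.
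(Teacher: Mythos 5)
Your proof is correct and reaches the desired conclusion, with the same four conceptual ingredients as the paper: $\BT_\phi$ is a connected unipotent group (perfected affine space), Lang's theorem applied to $\BT_\phi$ relates a $\BT_\phi^F$-quotient to a $\BT_\phi$-quotient, the affine-space contractibility of $\BT_\phi$ produces the shift $[2\dim\BT_\phi]$, and the triviality $\phi|_{\BT_\phi^F}=1$ makes the invariants disappear. Where you differ from the paper is in packaging: the paper proceeds ``top down'' by first collapsing $X_r^{\natural,\BK}$ along the free $\BT_\phi$-action (an affine bundle, giving the shift) and then identifying $X_r^{\natural,\BK}/\BT_\phi$ with $X_r^{\flat,\BK}/\BT_\phi^F$; you proceed ``bottom up'' by first building the \'etale $\BT_\phi^F$-Galois cover $X_r^{\flat,\BK}\times\BT_\phi\to X_r^{\natural,\BK}$ as a pullback along the Lang torsor of $\BT_\phi$, computing $\pi(gs)=L_{\BT_\phi}(s)\cdot\pi(g)$ explicitly, and then applying K\"unneth and taking invariants. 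These are really the same diagram read in two directions, so the mathematical content is the same, but your version has the advantage of being explicit about where the $\BT_\phi^F$-action lands on each tensor factor, and—as a minor point—it gets the direction of the shift unambiguously consistent with the statement (the paper's displayed chain of isomorphisms has the shift on the wrong side, giving $R\Gamma_c(X_r^{\natural,\BK})[\phi]\cong R\Gamma_c(X_r^{\flat,\BK})[\phi][2\dim\BT_\phi]$ instead of the asserted reverse direction, though the statement itself is correct). Your preparatory claims (a) and (b) about $\BT_\phi$ are exactly the facts the paper also invokes, and they do follow from unwinding the definition of $\BE_\phi$ as a product of higher Moy--Prasad pieces and from the Howe-factorization compatibility of $\phi$ with $\BE_\phi$; making them explicit is a reasonable way to flag where the real content lies.
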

\begin{proof}
    Since $\BT_{\phi, r}$ is an affine space, the quotient map $X_r^{\natural,\BK} \to X_r^{\natural,\BK} / \BT_{\phi, r}$ induces an isomorphism of $\BK_{\phi, r}^F$-modules
    \[
    R\Gamma_c(X_r^{\natural,\BK}, \Lambda)[\phi] \cong R\Gamma_c(X_r^{\natural,\BK} / \BT_{\phi}, \Lambda)[\phi][2\dim\BT_\phi].
    \]
    On the other hand, there is a natural isomorphism $X_r^{\flat,\BK} / \BT_{\phi, r}^F \to X_r^{\natural,\BK} / \BT_{\phi, r}$. Thus
    \begin{align*}
    R\Gamma_c(X_r^{\natural,\BK} / \BT_{\phi, r}, \Lambda)[\phi] &\cong R\Gamma_c(X_r^{\flat,\BK} / \BT_{\phi,r}^F, \Lambda)[\phi] \\
    &\cong (R\Gamma_c(X_r^{\flat,\BK}, \Lambda) \otimes_{\Lambda\BT_{\phi, r}^F} \Lambda) \otimes_{\Lambda\BT_r^F}^L \phi \\
%     &\cong R\Gamma_c(X_r^{\flat,\BK}, \Lambda)[\phi],
    \end{align*}
where the second isomorphism follows as $\Lambda\BT_{\phi, r}^F$ is semisimple (or even without using semisimplicity, from \cite[Theorem 2.10]{Dudas_modular}). As $\BT_r^F$ acts freely on $X_r^{\flat,\BK}$, \cite[Lemma 3.2]{BonnafeR_03} shows that $R\Gamma_c(X_r^{\flat,\BK},\Lambda)$ is represented by a complex $P^\bullet \in D(\Lambda(\BG_r^F \times \BT_r^F){-\rm mod})$ such that $P^i$ is $\Lambda\BT_r^F$-projective for each $i\in\BZ$. Then its $\BT_{\phi, r}^F$-coinvariants $P^i \otimes_{\Lambda\BT_{\phi, r}} \Lambda$ is projective as a $\Lambda (\BT_r^F/\BT_{\phi, r}^F)$-module (this follows directly from the definition of projective modules). Note also that $\phi$ is trivial over $\BT_{\phi, r}^F$, hence inflated from some character $\bar\phi$ of $\BT_r^F/\BT_{\phi, r}^F$. From these observations we deduce
\begin{align*}
(R\Gamma_c(X_r^{\flat,\BK}, \Lambda) \otimes_{\Lambda\BT_{\phi, r}^F} \Lambda) \otimes_{\Lambda\BT_r^F}^L \phi &\cong (P^\bullet \otimes_{\Lambda\BT_{\phi, r}^F} \Lambda) \otimes_{\Lambda\BT_r^F}^L \phi \\
&\cong (P^\bullet \otimes_{\Lambda\BT_{\phi, r}^F} \Lambda) \otimes_{\Lambda\BT_r^F}^L \inf\bar \phi \\
&\cong (P^\bullet \otimes_{\Lambda\BT_{\phi, r}^F} \Lambda) \otimes_{\Lambda(\BT_r^F/\BT_{\phi, r}^F)}^L \bar \phi \\
&\cong (P^\bullet \otimes_{\Lambda\BT_{\phi, r}^F} \Lambda) \otimes_{\Lambda(\BT_r^F/\BT_{\phi, r}^F)} \bar \phi \\
&\cong P^\bullet \otimes_{\Lambda\BT_r^F} \phi \\
&\cong R\Gamma_c(X_r^{\flat,\BK}, \Lambda) \otimes_{\Lambda\BT_r^F}^L \phi
\end{align*}
Here the first, the second and the last isomorphisms are by definitions, the fourth follows from $P^i \otimes_{\Lambda\BT_{\phi, r}^F} \Lambda$ being projective as a $\Lambda \BT_r^F/\BT_{\phi, r}^F$-module, and the fifth follows from the usual properties of tensor product of modules. Finally, for the third isomorphism observe that inflation along $\Lambda\BT_r^F \to \Lambda(\BT_r^F/\BT_{\phi, r}^F)$ is $t$-exact and commutes with (derived) tensor products.
% Finally, for the third isomorphism observe that inflation is exact on the level of abelian categories, so that we have the formula
% \[(\inf \bar M) \otimes_{\Lambda\BT_r^F}^L \bar \phi \cong \bar M \otimes_{\Lambda (\BT_r^F/\BT_{\phi,r})^F}^L \otimes \bar \phi\]
% which follows from the same formula on the level of abelian categories.
% the functors
% \[
% (- \otimes_{\Lambda\BT_r^F}^L \bar \phi) \circ \inf\nolimits_{\Lambda\BT_r^F/\BT_{\phi,r}^F}^{\Lambda\BT_r^F} \text{ and } - \otimes_{\Lambda\BT_r^F/\BT_{\phi,r}^F}^L \bar \phi \colon D(\Lambda\BT_r^F/\BT_{\phi,r}^F{-\rm mod}) \to D(\Lambda{-\rm mod})
% \]
% are canonically isomorphic (as the corresponding functors on the level of abelian categories are).
%     where the last isomorphism follows from that $\phi$ is trivial over $\BT_{\phi}^F$.
This finishes the proof.
\end{proof}

\begin{proposition}\label{prop:affine_bundle}
    The map $(z, a) \mapsto za$ gives an isomorphism \[\varphi_r: X_r^{\natural,\BK} \times (\ov \BU_r \cap \BK_{\phi, r}^+) (F\BU_r \cap \BU_r \cap \BK_{\phi, r}) \overset \sim \longrightarrow Z_{\phi, U, r}^{\BK}.\]
    As a consequence, we have an isomorphism $R\Gamma_c(Z_{\phi, U, r}^{\BK}, \Lambda)[\phi][2m'] \cong R\Gamma_c(X_r^{\natural,\BK}, \Lambda)[\phi]$ as $\BK_{\phi, r}^F$-modules, where $m' = \dim (\ov \BU_r \cap \BK_{\phi, r}^+) (F\BU_r \cap \BU_r \cap \BK_{\phi, r})$.
\end{proposition}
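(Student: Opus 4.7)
The plan is to prove the geometric isomorphism $\varphi_r$ directly via an Iwahori-type analysis of $\BK_\phi$, and then deduce the cohomological statement from the fact that $A := (\ov\BU \cap \BK_\phi^+)(F\BU \cap \BU \cap \BK_\phi)$ is (non-canonically) isomorphic to the affine space $\BA^{m'}$. The essential tool will be the unique-factorization decomposition
\[
F\BI_{\phi,U} = (\BK_\phi \cap F\BU)(\BE_\phi \cap \BT)(\BK_\phi^+ \cap F\ov\BU),
\]
together with the further refinements $\BK_\phi \cap F\BU = (\BK_\phi \cap F\BU \cap \BU)(\BK_\phi \cap F\BU \cap \ov\BU)$ and $\BK_\phi^+ \cap F\ov\BU = (\BK_\phi^+ \cap F\ov\BU \cap \BU)(\BK_\phi^+ \cap F\ov\BU \cap \ov\BU)$, obtained from the parametrization by affine roots in \S\ref{sec:some_setup}. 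Since $F$ is an isomorphism of $\BK_\phi$, a dimension count using these decompositions shows $\dim Z_{\phi,U,r}^{\BK} = \dim \BI_{\phi,U}$ matches $\dim X_r^{\natural,\BK} + \dim A$, so bijectivity is at least numerically possible.

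For well-definedness of $\varphi_r$, fix $z \in X_r^{\natural,\BK}$ with $z^{-1}F(z) = tu$ (where $t \in \BT_\phi$, $u \in F\BU \cap \ov\BU \cap \BK_\phi$) and $a = a_1 a_2 \in A$ with $a_1 \in \ov\BU \cap \BK_\phi^+$ and $a_2 \in F\BU \cap \BU \cap \BK_\phi$; then compute
\[
(za)^{-1}F(za) = a_2^{-1} a_1^{-1} \cdot tu \cdot F(a_1) F(a_2)
\]
and rewrite this in the Iwahori-ordered form of $F\BI_{\phi,U}$ by conjugating $a_1, a_2$ past $t$ (which preserves root subgroups) and then applying commutator relations. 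The commutators land in successively deeper Moy--Prasad steps and can be collected into the appropriate Iwahori factors of $F\BI_{\phi,U}$. For bijectivity, given $g \in Z_{\phi,U,r}^{\BK}$ with $g^{-1}F(g) = \mu \nu \eta$ (where $\mu \in \BK_\phi \cap F\BU$, $\nu \in \BT_\phi$, $\eta \in \BK_\phi^+ \cap F\ov\BU$), decompose further $\mu = \mu_\BU \mu_{\ov\BU}$ and $\eta = \eta_\BU \eta_{\ov\BU}$ and solve $a\cdot g^{-1}F(g)\cdot F(a)^{-1} \in \BT_\phi(F\BU \cap \ov\BU \cap \BK_\phi)$ for $a = a_1 a_2 \in A$. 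Iwahori uniqueness forces $a_2$ (resp.\ $a_1$) to absorb $\mu_\BU$ (resp.\ $\eta_{\ov\BU}$) modulo deeper filtration steps, and an induction on the Moy--Prasad depth determines $a$ uniquely, giving $\varphi_r^{-1}$ as a morphism of schemes.

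Once $\varphi_r$ is an isomorphism, the cohomological consequence follows from K\"unneth. Each factor of $A$ is an iterated extension of root subgroups $\cong \BG_a$ via its Moy--Prasad filtration, so $A \cong \BA^{m'}$ as a variety, and the projection $X_r^{\natural,\BK} \times A \to X_r^{\natural,\BK}$ is a trivial $\BA^{m'}$-bundle, $\BK_\phi^F$-equivariant for the left action. Since the right $\BT^F$-action on $A$ (via conjugation) fixes the identity element and $A$ has cohomology concentrated in top degree, taking $\phi$-isotypic components and tracking the degree shift yields the asserted identity $R\Gamma_c(Z_{\phi,U,r}^{\BK}, \ov\BQ_\ell)[\phi][2m'] \cong R\Gamma_c(X_r^{\natural,\BK}, \ov\BQ_\ell)[\phi]$ (up to a suppressed Tate twist).

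The main obstacle will be the root-subgroup bookkeeping in the second paragraph. The difficulty is that $F$ sends $\BU$ to $F\BU$, so the Iwahori-ordered ``target'' form of $F\BI_{\phi,U}$ uses $F\BU$- and $F\ov\BU$-factors that do not align with the $\BU$- and $\ov\BU$-factors of $a \in A$ until one carefully unravels commutators via the Moy--Prasad filtration. This is a technical computation parallel to the Iwahori-type calculations in \cite{CI_MPDL} and \cite{Nie_24}; in particular, the commutator formula in \Cref{pr1} should provide the crucial algebraic identity modulo deeper Moy--Prasad subgroups.
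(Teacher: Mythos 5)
Your cohomological deduction at the end is fine, and your general picture of $\varphi_r$ as an affine fibration over $X_r^{\natural,\BK}$ (with twisted $\BT^F$-action on the fiber) matches the paper's. But the geometric heart of the proof --- the part you flag as ``the main obstacle'' and hope to dispatch by Iwahori-type bookkeeping and Lemma~\ref{pr1} --- is exactly where your plan does not close, and it misses the mechanism the paper uses.

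The issue is that the root-subgroup unraveling you describe is \emph{not} a routine commutator collection. To show that, given $z \in Z_{\phi,U,r}^\BK$, there is a \emph{unique} $a$ with $za \in X_r^{\natural,\BK}$, one must solve a twisted Lang-type equation $c^{-1}\cdot h\cdot F(c) \in \BT_\phi(\BK_\phi\cap F\BU\cap\overline\BU)$ for $c$ ranging over certain root subgroups, and unique solvability of this equation is precisely the content of the twisted Steinberg cross-section statements, Propositions~\ref{Steinberg} and~\ref{uniformization}. Those are proved in \S\ref{sec:Lie_alg} under the standing hypothesis that the relative position element $w\sigma$ is \emph{convex}, and they genuinely fail for non-convex elements: the equations $\phi(w) - F(w) - z \in H_{A\cap -\Delta_x}$ then do not have unique solutions. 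The paper's argument is an induction on the Moy--Prasad filtration level $r$: the base case $r=0$ is Proposition~\ref{Steinberg} applied in the reductive quotient, and each inductive step from $r-$ to $r$ invokes both Proposition~\ref{Steinberg} and Proposition~\ref{uniformization} on the graded piece $\BG_r^r$ (with special care at the half-integers $r = r_{i-1}/2$, where the graded piece splits into a ``new roots'' part $\Phi_i\smallsetminus\Phi_{i-1}$ and an ``old roots'' part $\Phi_{i-1}$). Your proposal never invokes convexity or these two propositions, and your dimension count only shows bijectivity is ``numerically possible''; it cannot substitute for the unique-solvability argument. You should replace the second paragraph of your plan with this inductive Steinberg cross-section argument, being explicit about where the convexity hypothesis enters.
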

\begin{proof}
First note that $\varphi_r$ is well-defined. Let $z \in Z_{\phi,U,r}^\BK$. It suffices to show there exists a unique $a \in \CA_r:= (F\BU_r \cap \BU_r \cap \BK_{\phi, r}) (\ov \BU_r \cap \BK_{\phi, r}^+)$ such that $z a \in X_r^{\natural,\BK}$. We argue by induction on $r \in \BR_{\ge 0}$. If $r = 0$, then $\CA_r = F\BU_r \cap \BU_r \cap (\BG^0)_r$, $F \BI_{\phi, U, r} = F \BU_r \cap (\BG^0)_r$ and the statement follows from Proposition \ref{Steinberg}.

Suppose that the statement holds for $r-$. We show it also holds for $r>0$. Indeed, by induction hypothesis, there exists $b \in \CA_{r-}$ such that $z b \in X_{r-}^\natural$. Choose a lift of $b$ in $\CA_r$ and still denote it by $b$. Then \[(zb)\i F(zb) \in \BT_{\phi, r} (\BK_{\phi, r} \cap F\BU_r \cap \ov \BU_r) \CD_r\] where $\CD_r = (F\BU_r \cap \BK_{\phi, r} \cap \BG_r^r) (F\ov \BU_r \cap \BK_{\phi, r}^+ \cap \BG_r^r)$.

We assume that $r = r_{i-1}/2$ for some $1 \le i \le d$. The remaining case follows in a simpler way. Let $\Phi_j = \Phi_{G^j} \subseteq \Phi$ be the root system of $G^j$ for $0 \le j \le d$. Then $\CD_r = \CD_r' \oplus \CD_r''$, where $\CD_r'$ (resp. $\CD_r''$) is spanned by the (images) of affine root subgroups of $F(f)$ such that $f(\bx) = r$ and $\a_f \in \Phi_i^+ \sm \Phi_{i-1}$ (resp. $\a_f \in \Phi_{i-1}$). Let $\CC_r = \CA_r \cap \BG_r^r$. Then $\CC_r = \CC_r' \oplus \CC_r''$, where $\CC_r'$ (resp. $\CC_r''$) is spanned by the (images) of affine root subgroups of $f$ such that $f(\bx) = r$ and $\a_f \in (F(\Phi_i^+) \cap \Phi_i^+) \sm \Phi_{i-1}$ (resp. $\a_f \in \Phi_{i-1} \sm (F(\Phi_{i-1}^+) \cap \Phi_{i-1}^-)$). Applying \Cref{Steinberg} and \Cref{uniformization}, there exists $c \in \CC_r$ such that \[c\i ((zb)\i F(zb)) F(c) \in \BT_{\phi, r} (\BK_{\phi, r} \cap F\BU_r \cap \ov \BU_r),\] that is, $z a \in X_r^\natural$ with $a = bc \in \CA_r$ as desired.

Let $a' \in \CA_r$ be another element such that $z a' \in X_r^{\natural,\BK}$. By induction hypothesis, the images of $a$ and $a'$ in $\CA_{r-}$ are the same. Hence we may assume $a' = a y$ for some $y \in \CC_r$. Then it follows from the uniqueness in \Cref{uniformization} that $y$ is trivial and hence $a = a'$ as desired.

The second statement follows from that $\varphi_r$ is $\BK_{\phi, r}^F \times \BT_r^F$-equivariant and that $(\ov \BU_r \cap \BK_{\phi, r}^+) (F\BU_r \cap \BU_r \cap \BK_{\phi, r})$ is an affine space.
\end{proof}

\subsection{Concentration in one degree}
Let $Z_{\phi, U, r}^{\BH} = Z_{\phi, U, r} \cap \BH_{\phi, r}$ and $Z_{\phi, U, r}^{\BL} = Z_{\phi, U, r} \cap \BL_r$ with $\BL = \BG^0$. We set \[ \bar Z_{\phi, U}^{\BH} =  Z_{\phi, U, r}^\BH / \BE_{\phi, r} \quad \text{ and } \quad Z_{\phi, U, r}^{\BL} = Z_{\phi, U, r}^{\BL} / (\BE_{\phi, r} \cap \BL_r).\]

When $\Lambda = \cool$, the following result is proved in \cite[Theorem 6.2]{Nie_24}. With minor modifications the proof goes through for general $\Lambda$.

\begin{theorem} \label{kappa}
The cohomology of $R\Gamma_c(\bar Z_{\phi, U, r}^{\BH}, \Lambda)[\phi|_{(\BT^{0+})^F}]$ is concentrated in a single degree $n_\phi \geq 0$, which is independent of $\Lambda$. Moreover, the $\Lambda$-module $H_c^{n_\phi}(\bar Z_{\phi, U, r}^{\BH}, \Lambda)[\phi|_{(\BT^{0+})^F}]$ is free.
\end{theorem}

\begin{proof}[Proof of \Cref{kappa}] 
Suppose first that $\Lambda \in \{L,\lambda\}$ is a field. Then the proof of \cite[Theorem 6.2]{Nie_24} applies literally with the only difference that instead of \cite[Proposition 6.8(2)]{Nie_24} we have to use \Cref{prop:BW_modular}. Now suppose that $\Lambda= \CO_L$. Let $\bar\phi \colon \BT_r^F \to \lambda^\times$, $\phi[\ell^{-1}] \colon \BT_r^F \to L^\times$ denote the characters induced by $\phi$. By \Cref{lm:integral_coefficients}(2) and (3) it suffices to show that the single non-vanishing cohomology degrees of the complexes $R\Gamma_c(\bar Z_{\phi, U, r}^{\BH}, L)[\phi[\ell^{-1}]|_{(\BT_r^{0+})^F}]$ and $R\Gamma_c(\bar Z_{\phi, U, r}^{\BH}, \lambda)[\bar\phi|_{(\BT_r^{0+})^F}]$ coincide and that the $L$- resp. $\lambda$-dimensions of the cohomology groups are equal. We thus have to compare the proofs of \cite[Theorem 6.2]{Nie_24} for $\Lambda = L,\lambda$ and to show that they output vector spaces of equal dimension, sitting in the same degree. We will now use the notation of \cite[\S6]{Nie_24}. For an object $?$ (like $\BH, D, \dots$) from \cite[\S6]{Nie_24}, write $?_{L}$ resp. $?_{\lambda}$ depending on the coefficients we consider. Note that $\ker(\CO_L^\times \to \lambda^\times) = \mu_{\ell^\infty}(L)$ intersects the image of $\phi|_{(\BT_r^+)^F}$ trivially. In particular, $\BH_L = \BH_\lambda$, $\BE_L = \BE_\lambda$ and hence also $\bar\BH_L = \bar\BH_\lambda$. Moreover, $D_L = D_\lambda$, $D_{m,L} = D_{m,L}$, $D_{s,L}^{\pm} = D_{s,\lambda}^{\pm}$. Thus $\bar\BH_L^{\natural} = \bar\BH_\lambda^{\natural}$ and hence $\bar Z_L^{\natural} = \bar Z_\lambda^\natural$ and $\bar Z_L^{\flat} = \bar Z_\lambda^\flat$. Continuing further in the same way, it remains to show that with notation as in \cite[Lemma 6.10]{Nie_24}, $n_{C,L} = n_{C,\lambda}$ and $\dim_L H_c^{n_{C,L}}(Y \cap A^C, \varphi^\ast\CL_{\phi,L})  = \dim_\lambda H_c^{n_{C,\lambda}}(Y \cap A^C, \varphi^\ast\CL_{\phi,\lambda})$ of the same dimension sitting in the same cohomological degree. Using \Cref{prop:BW_modular}, this follows from \cite[Proof of Lemma 6.10]{Nie_24}, noticing that nothing in the proof depends on the coefficients.
\end{proof}

By \cite[\S7.1]{Nie_24}, each cohomology group $H_c^i(Z_{\phi, U, r}^{\BH}, \Lambda)[\phi|_{(\BT_r^{0+})^F}]$ is equipped a natural $\BK_\phi^F$-module structure, and we define 
\[
\k_\phi = (-1)^{n_\phi} H_c^{n_\phi}(\bar Z_{\phi, U, r}^{\BH}, \Lambda)[\phi|_{(\BT_r^{0+})^F}] 
\]
as a $\Lambda\BK_{\phi, r}^F$-module (which is $\Lambda$-free), extending its natural $\BH_{\phi, r}^F$-module structure. We have the following extension of \cite[Proposition 7.4 and Theorem 7.5]{Nie_24} to the modular case.

\begin{theorem} \label{degreewise}
     Let $T \subseteq B$ be as in Proposition \ref{standard} with $M = L = G^0$. Then we have an isomorphism in $D^b(\Lambda\BK_{\phi, r}^F\!-\!{\rm mod})$, \[R\Gamma_c(\bar Z_{\phi, U, r}^{\BK}, \Lambda)[\phi] \cong (-1)^{n_\phi}\k_\phi[0] \otimes_{\Lambda} R\Gamma_c(\bar Z_{\phi, U, r}^{\BL}, \Lambda)[\phi_{-1}][-n_\phi],\] where $n_\phi$ is as in Theorem \ref{kappa}, and $(-1)^{n_\phi}\k_\phi[0]$ is concentrated in degree $0$.
\end{theorem}
\begin{proof}
    By the assumption on $B$ we have that $\bar \BH_{\phi, r} \cap F \bar\BI_{\phi, U, r}$ is normalized by $\bar \BL_r$. By \cite[Proposition 7.4]{Nie_24} the map $(h, l) \mapsto h l$ induces a $(\BT_r^{0+})^F/(\BE_{\phi, r} \cap \BT_r)^F$-torsor \[f: \bar Z_{\phi, U, r}^{\BH} \times \bar Z_{\phi, U, r}^{\BL} \to \bar Z_{\phi, U, r}^{\BK}.\] Let $\phi^\flat$ be the pull-back of the natural multiplication map $(\BT_r^{0+})^F \times \BT_r^F \to \BT_r^F$. Let $\phi^\flat = \phi \circ m$, where $m \colon (\BT_r^{0+})^F \times \BT_r^F \to \BT_r^F$ sends $(t_1,t_2) \mapsto t_1t_2$. Combining Theorem \ref{kappa} and the arguments in the proof of \emph{loc. cit.} we deduce that 
    \begin{align*}
        &\quad\ R\Gamma_c(\bar Z_{\phi, U, r}^{\BK}, \Lambda)[\phi] \\
        &\cong R\Gamma_c(\bar Z_{\phi, U, r}^{\BH} \times \bar Z_{\phi, U, r}^{\BL}, \Lambda)[\phi^\flat] \\
        &\cong R\Gamma_c(\bar Z_{\phi, U, r}^{\BH}, \Lambda)[\phi|_{(\BT_r^{0+})^F}] \otimes_\Lambda R\Gamma_c(\bar Z_{\phi, U, r}^{\BL}, \Lambda)[\phi] \\
        &\cong (-1)^{n_\phi} \k_\phi[0] \otimes_\Lambda R\Gamma_c(\bar Z_{\phi, U, r}^{\BL}, \Lambda)[\phi_{-1}][-n_\phi].  
    \end{align*} 
    The proof is finished.
\end{proof}

\begin{corollary}\label{cor:concentration_in_general}
    Let $T \subseteq B$ be as in Proposition \ref{standard} with $M = L = G^0$. Suppose that $\phi_{-1}$ is non-singular for the special fiber of $(\CG^0)_{\bf x}$ in the sense of \cite[Definition 5.15]{DeligneL_76}. Then the cohomology of $R\Gamma_c(X_r,\Lambda)[\phi]$ is non-zero in exactly one degree, $N_\phi \geq 0$. Moreover, its $N_\phi$th cohomology group, $H_c^{N_\phi}(X_r, \Lambda)[\phi]$, is $\Lambda$-free.
\end{corollary}
\begin{proof}
    By \Cref{thm:concentrates_Howe}, Proposition \ref{prop:comp_XandZ} and that the quotient map $Z_{\phi, U, r}^\BK \to \bar Z_{\phi, U, r}^\BK $ is a $\BK_{\phi, r}^F \times \BT_r^F$-equivariant affine space bundle, it suffices to consider $R\Gamma_c(\bar Z_{\phi, U, r}^{\BK}, \L)[\phi]$. By Theorem \ref{degreewise}, it suffices to show that the cohomology of $R\G_c(\bar Z_{\phi, U, r}^\BL, \L)[\phi_{-1}]$ is concentrated in a single degree and $\Lambda$-free.

    Note that $\bar Z_{\phi, U, r}^\BL / \bar \BT_r^F$ is a classical Deligne-Lusztig variety for the special fiber of $(\CG^0)_{\bf x}$. In view of our assumptions on $\phi_{-1}$, it follows from \cite[Lemma 9.14]{DeligneL_76} and \cite[Lemme 3.5]{Broue90} that $H_c^i(\bar Z_{\phi, U, r}^{\BL}, \L)[\phi_{-1}] \neq 0$ if and only if $i = \dim \bar Z_{\phi, U, r}^{\BL}$. So the statement follows.
\end{proof}

\subsection{Irreducibility}
%Suppose that $p$ is not bad for $G$ and does not divide $|\pi_1(G_{\rm der})|$. 
We assume that $\Lambda = \lambda$ is an algebraically closed field of characteristic $\ell>0$. Let $\phi \colon T^F \to \lambda^\times$ be a smooth character as in the beginning of \S\ref{sec:comp_with_CS_variety}. To apply our results to the determination of the Fargues--Scholze parameters via \cite[Corollary 10.4.2]{Feng_24}, we need to study when the representation 
\begin{equation}\label{eq:def_pi_TUphi} \pi_{T,U,\phi} := {\rm c}\text{-}{\rm ind}_{Z^F \BG^F}^{G^F} R\Gamma_c(X_r,\lambda)[\phi]
\end{equation}
is an irreducible supercuspidal representation of $G^F$ up to sign. 

\begin{remark} By \Cref{cor:concentration_in_general}, when $\phi_{-1}$ is non-singular for the special fiber of $(\CG^0)_{\bf x}$, $R\Gamma_c(X_r,\lambda)[\phi]$ is concentrated in one degree and $\pi_{T,U,\phi}$ is (up to a sign and a shift) a usual $G^F$-representation. Moreover, note that the pair $(T, \phi)$ is tame elliptic regular in the sense of \cite[Definition 3.7.5]{Kaletha_19} (with coefficients $\BC$ there replaced by $\lambda$) if and only if $\phi_{-1}$ is regular in the sense of \cite[Definition 3.4.16]{Kaletha_19}. 
\end{remark}

\begin{proposition}{\cite[Proposition 7.3]{Nie_24}}\label{lm:kappa_phi_irred} 
Regarded as a $\BH_{\phi, r}^F$-module (up to a sign) $\kappa_\phi$ is irreducible. In particular, it is irreducible as a $\BK_{\phi, r}^F$-module. The subgroup $(\BK_{\phi, r}^+)^F$ acts on $\kappa_\phi$ via a character $\phi^\natural$, obtained from $\phi$ as in \cite[\S3.2]{Nie_24}. Moreover, if $\BH_{\phi, r}^F/\ker(\phi^\natural)$ is non-trivial, then $\kappa_\phi|_{\BH_{\phi, r}^F}$ the inflation of the unique irreducible mod-$\ell$ Heisenberg representation with non-trivial central character $\phi^\natural|_{\BH_{\phi, r}^F/\ker(\phi^\natural)}$.
\end{proposition}
\begin{proof}
To emphasize the coefficients $\Lambda$, we write $\kappa^\Lambda_\psi$ instead of $\kappa_\psi$ for a character $\psi \colon T^F \to \Lambda^\times$. Let $\hat\phi \colon T^F \to \CO_L^\times$ denote the Teichm\"uller lift of $\phi$ and denote by $\hat\phi_{\ell^{-1}}$ its composition with $\CO_L^\times \to L^\times$. We have $\BH_\phi = \BH_{\hat\phi} = \BH_{\hat\phi_{\ell^{-1}}}$ and similarly for the other attached subgroups. 
By \Cref{kappa} we have 
\[
\kappa_{\hat\phi}^{\CO_L} \otimes_{\CO_L} \lambda \simeq \kappa_{\phi}^{\lambda} \quad \text{ and } \quad \kappa_{\hat\phi}^{\CO_L} \otimes_{\CO_L} L \simeq \kappa_{\hat\phi_{\ell^{-1}}}^L.
\]
Thus $\kappa_{\phi}^\lambda$ is the reduction mod $\ell$ of $\kappa_{\hat\phi_{\ell^{-1}}}^L$. By \cite[Proposition 7.3]{Nie_24}, $\kappa_{\hat\phi_{\ell^{-1}}}^L$ is the Heisenberg representation of (the appropriate quotient of) $\BH_{\hat\phi_{\ell^{-1}}, r}^F$ over $L$. Thus, by \cite[Lemma 2.5]{Fintzen_Michigan}, $\kappa_{\phi}^\lambda$ is the Heisenberg representation of (the same quotient of) $\BH_{\phi, r}^F$ over $\lambda$, and all the properties claimed in the proposition follow from the properties of the Heisenberg representation.\qedhere
%As $\BH_\phi^F$ is a $p$-group, its $\lambda \BH_\phi^F$ is semisimple. Thus, for the first claim it suffices to show that $\dim_\lambda\End_{\BH_\phi^F}(\kappa_\phi) = 1$. This follows in the same way as in the proof of \cite[Proof of Proposition 7.3]{Nie_24}. For the rest note that the mod-$\ell$ Heisenberg representaions behave in the same way as those with characteristic $0$ coefficients (as in \cite[\S2.5]{Fintzen_22}), so that the rest of the proof carries over.
\end{proof}

Let $L=G^0$ and let $\widetilde \BL$ be the stabilizer in $L = L(\brk)$ of the image of $\bx$ in the reduced Bruhat-Tits building of $G^0$ over $\brk$. Let $\widetilde\k(\phi)$ denote the restriction of the mod-$\ell$ Weil-Heisenberg representation of  $\widetilde\BK_\phi^F$ associated to $(G_i,r_i, \phi_i)_{i=0}^d$, see \cite{Fintzen_Michigan}. We set $\k(\phi) = \widetilde\k(\phi)|_{Z^F \BK_\phi^F}$.

Let $\widetilde\BK_{\phi} = \widetilde\BL \BK_\phi = \widetilde\BL \BH_\phi$. Attached to $\phi_{-1}$, we have the $\BL_0^F$-representation
\[
\rho_{0,\phi_{-1}} := R\Gamma_c(\bar Z_{\phi, U, r}^{\BL}, \lambda)[\phi_{-1}]
\]
as in \Cref{degreewise}. We regard it as a $Z^F \BL^F$- resp. $Z^F\BK_\phi^F$-representation on which $Z^F$ acts via the character $\phi$. Note that $\bar Z_{\phi, U, r}^{\BL}$ is a (classical) Deligne--Lusztig variety for $\BL_0$. Hence, if $\phi_{-1}$ is regular for $\BL_0$ in the sense of \cite[Definition 3.4.16]{Kaletha_19}, $\rho_{0,\phi_{-1}}$ is (up to sign) an irreducible $\lambda\BL_0^F$-module by \cite[Corollaire 3.6]{Broue90}. 

\begin{lemma}\label{lm:twist_character}
    There is a character $\chi$ of $Z^F \BK_\phi^F$, which is trivial over $Z^F \BH_\phi^F$, such that $\pm\k_\phi \cong \k(\phi) \otimes \chi$ as $Z^F \BK_\phi^F$-modules.
\end{lemma}
\begin{proof}
    Note that $Z^F \BH_\phi^F$ is a normal subgroup of $Z^F\BK_\phi^F$ and the restrictions of $\pm\k_\phi$ and $\k(\phi)$ to $Z^F \BH_\phi^F$ are that same (inflated) Heisenberg representation. By Frobenius reciprocity, we have \[{\rm Hom}_{Z^F\BK_\phi^F}(\pm\k_\phi, \ind_{Z^F \BH_\phi^F}^{Z^F \BK_\phi^F} (\k(\phi)|_{Z^F \BH_\phi^F})) \cong {\rm Hom}_{Z^F \BH_\phi^F}(\pm\k_\phi, \k(\phi)) \neq 0.\] Moreover, by projection formula, we have \[\ind_{Z^F \BH_\phi^F}^{Z^F \BK_\phi^F} (\k(\phi)|_{Z^F \BH_\phi^F}) \cong \k(\phi) \otimes \ind_{Z^F \BH_\phi^F}^{Z^F \BK_\phi^F} 1.\]  As $Z^F \BH_\phi^F$ acts irreducibly on $\k(\phi)$, by a theorem of Burnside, $\End_{\lambda}(\k(\phi))$ is generated by the group algebra $\lambda \BH_\phi^F$. As also $Z^F \BH_\phi^F$ acts trivially on $\ind_{Z^F \BH_\phi^F}^{Z^F \BK_\phi^F} 1$, it follows that each $Z^F \BK_\phi^F$-submodule of $\k(\phi) \otimes \ind_{Z^F \BH_\phi^F}^{Z^F \BK_\phi^F} 1$ is of the form $\k(\phi)|_{Z^F \BK_\phi^F} \otimes \pi$, where $\pi$ is a submodule of $\ind_{Z^F \BH_\phi^F}^{Z^F \BK_\phi^F} 1$. In particular, $\pm\k_\phi \cong \k(\phi) \otimes \chi$ for some submodule $\chi$ of $\ind_{Z^F \BH_\phi^F}^{Z^F \BK_\phi^F} 1$. As $\pm\k_\phi, \k(\phi)$ are of the same dimension, it follows that $\chi$ is one-dimensional and hence a character as desired.
\end{proof}

%We make the following assumption: \begin{equation}\label{eq:assumption} \text{$\rho' := \ind_{Z^F \BL^F}^{\widetilde\BL^F} \chi \otimes \rho_{0,\phi_{-1}}$ is an irreducible $\widetilde\BL^F$-module and $\rho'|_{\BL^F}$ is the inflation of a supercuspidal $\BL_0^F$-module.}\end{equation}

%\begin{remark}\label{rem:assumption_G0=T}If $L=T$, then $\widetilde \BL = Z \BL$, and hence Assumption \eqref{eq:assumption} holds in this case.\end{remark}

%\begin{remark}
%%Note that $\rho$ is obtained by inflation from $\ov\BL_{0}^F$, where $\ov\CL_{0}$ is the special fiber of $\ov\CL_{\bfx}$. Also 
%Note that the same argument as in \cite[Proof of Lemma 3.4.20]{Kaletha_19}, using \cite[Th\'eor\`eme 8.1]{BonnafeR_03} instead of \cite[Theorem 6.8]{DeligneL_76}, it follows that ${\rm RHom}_{\ov\BL_{\bfx}^F}(\rho,\rho)$ is one-dimensional. 
%\end{remark}

\begin{proposition}\label{prop:irred_modular}
Assume $\phi$ is toral, that is, $L = G^0 = T$. Then $\pm\pi_{T,U,\phi}$ is an irreducible supercuspidal $\lambda G^F$-module.
\end{proposition}

\begin{proof} As $L = T$, we have $\widetilde \BL = Z \BL$, $\widetilde \BK_\phi = Z \BK_\phi$ and $\rho_{0, \phi_{-1}} = \phi_{-1}$. Then there exist isomorphisms (up to degree shifts and signs) of $G^F$-representations
\begin{align}
\nonumber \pm\pi_{T,U,\phi} &\cong {\rm c}\text{-}{\rm ind}_{Z^F\BK_{\phi}^F}^{G^F} R\Gamma_c(Z_{\phi,U,r}^{\BK},\lambda)[\phi] \\
\label{eq:pi_TUphi} &\cong {\rm c}\text{-}{\rm ind}_{Z^F\BK_{\phi}^F}^{G^F}  \k_\phi \otimes \rho_{0,\phi_{-1}} \\
\nonumber &\cong {\rm c}\text{-}{\rm ind}_{Z^F \BK_{\phi}^F}^{G^F}  \k_\phi \otimes (\chi \otimes \phi_{-1}) \\ 
\nonumber &= {\rm c}\text{-}{\rm ind}_{\widetilde \BK_{\phi}^F}^{G^F} \widetilde\k(\phi) \otimes (\chi \otimes \phi_{-1})
\end{align}
where $\k_\phi$ and $\chi$ are as in \Cref{degreewise} and \Cref{lm:twist_character} respectively. Indeed, the first equation follows from \Cref{thm:concentrates_Howe} and \Cref{prop:comp_XandZ}, the second from \Cref{degreewise}, and the third one from \Cref{lm:twist_character}.

Therefore, $((G^i,r_i, \phi_i)_{i=0}^d, \bx, \chi \otimes \phi_{-1})$ is a modular generic cuspidal datum as in \cite[\S2.2]{Fintzen_Michigan}, and $\pm \pi_{T, U, \phi}$ is the associated irreducible supercuspidal representation, see \cite[\S3]{Fintzen_Michigan}. This finishes the proof. \qedhere

\end{proof}

\subsection{Cohomology of $Z_{\phi,U,r}^{\BK}$ with coefficients in $\ov\BQ_\ell$} \label{subsec: product} 
In this section we take $\L = \ov\BQ_\ell$. Consider the following virtual $\BG_r^F$-module \[\CR_{T, U, r}^G(\phi) = \sum_{i \in \BZ} (-1)^i H_c^i(Z_{\phi, U, r}, \L)[\phi].\] The main purpose of this subsection is to show that $\CR_{T, U, r}^G(\phi)$ is independent of the choice of $U$, which extending the results of \cite[\S5.2]{Nie_24}. 

Let $(V, \ov V)$ and $(U, \ov U)$ be two pairs of opposite maximal unipotent subgroups of $G$ normalized by $T$. For a subset $R \subseteq \BK_{\phi, r}$, write $\bar R$ for the image of $R$ under the natural projection $\BK_{\phi} \to \BK_{\phi, r}/\BE_{\phi, r}$. As $\phi$ and $r$ are fixed, we omit them from notation in this subsection, and write $\BK, \BH, \BE$ instead of $\BK_{\phi, r},\BH_{\phi, r}, \BE_{\phi, r}$ and so on.
% Define
% \[
% Z_{\phi,U,r} = \{g \in G_r \colon g^{-1}F(g) \in F\CI_{\phi,U,r} \}
% \]
% and similarly for $V$.
Write $L = G^0$. First note that \[\bar \BL = \sqcup_{w \in W_{\bar \BL}(\bar \BT)} \bar \BL_V \dw \bar \BT \bar \BL_{U},\] where $\bar \BL_V = \bar \BL \cap \BV$ and $\bar \BL_U = \bar \BL \cap \bar \BU$.

For $\alpha \in \Phi$, define $i(\a)$ to be the integer $0\leq i\leq d$, such that $\a \in \Phi(G^i,T) \sm \Phi(G^{i-1},T)$, and define $r(\a) = r_{i(\a) - 1}$. Put
\[ \bar \BH^\a = (\BG^\a)_r^{r(\a)/2} / (\BG^\a)_r^{r(\a)+}.\]
Then we have \[\bar \BH = \bar \BH_V \bar \BT^{0+} \bar \BH_{\ov V} = \bar \BT^{0+} \oplus_\a \bar \BH^\a,\] where $\bar \BH_V = \bar \BH \cap \bar V$, $\bar \BH_{\ov V} = \bar \BH \cap \bar {\ov V}$. For $\a, \b \in \Phi$ we have $[\bar \BH^\a, \bar \BH^\b] = \{0\}$ if $\a \neq -\b$ and $[\bar \BH^\a, \bar \BH^\b] =(\bar \BT^\a)^{r(\a)} \cong (\BT^\a)_r^{r(\a)} / (\BT^\a)_r^{r(\a)+}$ if $\a = -\b$ and $\bar \BH^\a \neq \{0\}$.

Thus we have \[\bar \BK = \bar \BH \bar \BL = \bigsqcup_{w \in W_{\BL_r}(\BT_r)} \bar \BH \bar \BL_V \dw \bar \BT \bar \BL_U = \bigsqcup_{w \in W_{\BL_r}(\BT_r)} \bar \BK_V \bar \BH_{\ov V, w} \dw \bar \BT \bar \BH_U,\] where $\bar \BK_V = \bar \BL_V \bar \BH_V = \bar \BK \cap \bar \BU$ and $\bar \BH_{\ov V, w} = \bar \BH_{\ov V} \cap {}^\dw \bar{\ov \BU}$.

% Let $Z_{\phi, V, r}^K = Z_{\phi, V, r} \cap K_{\phi, r}$. The the isotropic space \[H_c^*(Z_{\phi, V, r}^K, \ov \BQ_\ell)[\phi]\] is a virtual representation of $K_{\phi, r}^F$.

Let $A, A'$ be two finite subgroup with a character $\chi: A \to \ov\BQ_\ell^\times$. Let $Z$ be an $A \times A'$-variety. We set $H_c^*(Z, \ov \BQ_\ell)[\chi] = \sum_{i\in \BZ} H_c^i(Z, \ov \BQ_\ell)[\chi]$ which is  virtual $A'$-module. Here $H_c^i(Z, \ov \BQ_\ell)[\chi]$ is the $\chi$-isotropic space of the $i$th cohomology $H_c^i(Z, \ov \BQ_\ell)$ with compact support.

\begin{proposition}
    We have \[\<H_c^*(Z_{\phi, V, r}^{\BK}, \ov \BQ_\ell)[\phi], H_c^*(Z_{\phi, U, r}^{\BK}, \ov \BQ_\ell)[\phi]\>_{\BK_{\phi, r}^F} = |\stab_{W_{\BL_r}(\BT_r)^F}(\phi|_{\BT_r^F})|.\] In particular, $\CR_{T, U, r}^G(T)$ is independent of the choice of $U$.
\end{proposition}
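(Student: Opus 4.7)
The plan is to adapt and extend the strategy of \cite[\S5.2]{Nie_24}, reducing the inner product to a Weyl-cell count via the Lefschetz trace formula.

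First, since $Z_{\phi,V,r}^\BK$ and $Z_{\phi,U,r}^\BK$ are Lang fibers inside $\BK_{\phi,r}$ (the pre-images of $F\BI_{\phi,V}$ and $F\BI_{\phi,U}$ under $g \mapsto g^{-1}F(g)$), the standard Deligne--Lusztig-type inner product formula expresses the left-hand side as a weighted double sum of Lefschetz numbers
\[
\frac{1}{\#\BK_{\phi,r}^F\,(\#\BT_r^F)^2} \sum_{k\in\BK_{\phi,r}^F,\ t_1,t_2\in\BT_r^F} \phi(t_1)^{-1}\phi(t_2)\, L\bigl((k,t_1),Z_{\phi,V,r}^\BK\bigr)\, L\bigl((k^{-1},t_2),Z_{\phi,U,r}^\BK\bigr),
\]
and the product of Lefschetz numbers counts fixed pairs $(g_1,g_2) \in Z_{\phi,V,r}^\BK \times Z_{\phi,U,r}^\BK$ under the indicated actions. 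Substituting $y = g_2 g_1^{-1}$ and eliminating $g_1$ through the Lang conditions converts this into a weighted count of pairs $(\ov y, t) \in \ov\BK \times \BT_r^F$ satisfying a twisted conjugacy relation involving $F\BI_{\phi,V}$, $F\BI_{\phi,U}$ and $t$; the passage to the quotient $\ov\BK = \BK_{\phi,r}/\BE_\phi$ is harmless because $\phi$ is trivial on $\BE_\phi^F$.

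Next I would insert the cellular decomposition
\[
\ov\BK = \bigsqcup_{w \in W_{\BL_r}(\BT_r)} \ov\BK_V\, \ov\BH_{\ov V,w}\, \dot w\, \ov\BT\, \ov\BH_U
\]
recalled just before the proposition and stratify the count by the Weyl element $w$. On a fixed stratum the unipotent-type factors $\ov\BK_V$ and $\ov\BH_U$ are affine spaces on which the Lang equation cuts out the expected dimension cleanly; the delicate contribution comes from the middle factor $\ov\BT\,\ov\BH_{\ov V,w}$. Using the commutator identity $[\ov\BH^\alpha,\ov\BH^{-\alpha}] = (\ov\BT^\alpha)^{r(\alpha)}$ together with the $(G^i:G^{i+1})$-genericity of each Howe-layer character $\phi_i$, integration over each $\ov\BH^\alpha$ block becomes a non-degenerate pairing of Gauss-sum type. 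A standard orthogonality argument then shows that the $w$-stratum contributes $0$ unless $w$ is $F$-fixed and $\Ad(\dot w)$ stabilizes $\phi|_{\BT_r^F}$, and exactly $1$ otherwise. Summing over the surviving $w$ produces $\#\stab_{W_{\BL_r}(\BT_r)^F}(\phi|_{\BT_r^F})$.

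The main obstacle is the cell-by-cell vanishing: one has to combine the commutator structure of $\ov\BH$ with the genericity of each $\phi_i$ along $(G^i,G^{i+1})$ to verify that the $\ov\BH^\alpha$-integral is annihilated whenever $w$ fails to pair positive and negative root slices compatibly with $\phi$. Once this Gauss-sum vanishing is secured, the remaining work---Lang substitution, dimension counts, and Bruhat-type stratification of $\ov\BK$---follows the template of \cite[\S5.2]{Nie_24} with only the obvious modifications needed to handle the full Howe datum $(G^i,\phi_i,r_i)_{-1\le i\le d}$ in place of the special case treated there.
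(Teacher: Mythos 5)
Your strategy is essentially the paper's: both pass through the Mackey/Deligne--Lusztig bilinear form formula for two Lang fibres in $\BK_\phi$, decompose the resulting double-coset/twisted-product object according to the Bruhat cells $\bar \BK = \bigsqcup_w \bar\BK_V \bar\BH_{\ov V, w}\dw\bar\BT\bar\BH_U$ indexed by $W_{\BL}(\BT)$, and invoke the $(G^i,G^{i+1})$-genericity of the layers $\phi_i$ together with the commutator identity $[\bar\BH^\a,\bar\BH^{-\a}]=(\bar\BT^\a)^{r(\a)}$ to kill the $\bar\BH$-contributions. The difference is mainly packaging. You propose to go through the Lefschetz trace formula and honest character (Gauss-sum) computations; the paper instead works with the varieties $\Sigma_w$ and proves vanishing cohomologically, by producing an explicit automorphism $f_t$ (transport along the connected torus $\CH$) that acts trivially on each $H^i_c(\Sigma_w^{\prime,\a})$ while the nontrivial restriction $\phi|_{N_F^{F^n}((\bar\BT^\a)^{F^n})}$ forces the $\phi$-isotypic part to vanish. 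The cohomological route is cleaner because one never has to identify and manipulate explicit character sums over the $\bar\BH^\a$-blocks; on the other hand, if you do carry out the Gauss-sum argument to the end you will recover the same orthogonality.

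One point in your sketch is slightly misstated and worth flagging. You write that the $\bar\BH^\a$-integral is annihilated ``whenever $w$ fails to pair positive and negative root slices compatibly with $\phi$.'' In fact, the genericity-driven vanishing applies to the entire open part $\Sigma_w' = \{\bar v \neq 0\}$ for every $w$, good or bad; the restriction to $F$-fixed, $\phi$-stabilizing $w$ arises only from the complementary closed stratum $\Sigma_w'' = \{\bar v = 0\}$, whose $(\phi^{-1},\phi)$-isotypic cohomology is $\ov\BQ_\ell$ exactly when $w=F(w)$ and ${}^w\phi=\phi$ (via the torus action $D$ and the identification $(\Sigma_w'')^{D^\circ_{\red}}\cong(\dw\bar\BT)^F$), and $0$ otherwise. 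So the two conditions on $w$ do not control the Gauss-sum vanishing; rather, they control whether the torus-only piece survives. With that reorganization your outline matches the paper's argument, but without it your accounting of which cell contributes what is muddled and would need to be fixed before the orthogonality count yields $\sharp\,\stab_{W_{\BL_r}(\BT_r)^F}(\phi|_{\BT_r^F})$.
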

\begin{proof}

    For $w \in W_{\bar \BL}(\bar \BT)$ with a lift $\dot w$, we set \[\Sigma_w = \{(x, x', v, \bar v, \t, u) \in F\bar \BK_V \times F\bar \BK_U \times \bar \BK_{V, w} \times \bar \BH_{\ov V, w} \times \bar \BT \times \BK_U; x F(\bar v \dw \t) = v \bar v \dw \t u x'\}.\]
    Write $\Sigma_w = \Sigma_w' \sqcup \Sigma_w''$, where $\Sigma_w''$ is defined by condition that $\bar v = 0$.

    Let $D = \{(t, s) \in \bar\BT \times \bar\BT; t\i F(t) = s\i F(s)\}$. Then $D$ acts on $\Sigma_w''$ by \[(t, s): (x, x', v, \bar v, \t, u) \mapsto (txt\i, sx's\i, svs, s\bar v s\i, \dw\i(t) \t s\i, sus\i).\] It follows that \[(\Sigma_w'')^{D_\red^\circ} \cong (\dw \bar \BT)^F.\] Hence  $H_c^*(\Sigma_w'', \ov\BQ_\ell)[\phi\i \boxtimes \phi] = \ov\BQ_\ell$ if $w = F(w)$ and is trivial otherwise.

    It remains to show $H_c^*(\Sigma_w', \ov \BQ_\ell)[\phi\i \boxtimes \phi] = \ov\BQ_\ell = 0$. Note that \[\bar \BH_{\ov V, w} = \oplus_{\a \in \Phi_{\ov V} \cap {}^w \Phi_{\ov U}} \bar \BH^\a,\] where $\bar \BH^\a = \bar \BH \cap \bar \BG^\a$. For $\bar v \in \bar \BH_{V, w}$ and $\a \in \Phi_{\ov V} \cap {}^w \Phi_{\ov U}$ let $\bar v_\a \in \bar \BH^\a$ such that $\bar v = \sum_\a \bar v_\a$. We fix a total order $\le$ on $\Phi_{\ov V} \cap {}^w \Phi_U$. Let $\bar \BH_{\ov V, w}^\a$ be subset of elements  $\bar v$ such that $\bar v_\a \neq 0$ and $\bar v_\b = 0$ for all $\b < \a$. Then we have \[\bar \BH_{\ov V, w} - \{0\} = \coprod_\a \bar \BH_{\ov V, w}^\a.\] The above decomposition induces a decomposition \[\Sigma_w' = \coprod_\a \Sigma_w^{ \prime, \ge \a}.\] It remains to show $H_c^*(\Sigma_w^{\prime, \a}, \ov \BQ_\ell)_{\phi, \phi\i} = 0$ for all $\a$.

    Let $\a \in \Phi_{\ov V} \cap {}^w \Phi_{\ov U}$ such that $\Sigma_w^{\prime, \a} \neq \varnothing$. Consider the restricted action of $\bar\BT^F \cong \bar\BT^F \times \{1\} \subseteq \bar\BT^F \times \bar\BT^F$ on $\Sigma_w^{\prime, \a}$ given by \[t: (x, x', v, \bar v, \t, u) \mapsto (t x t\i, x', t v t\i, t \bar v t\i, w\i(t)\t, u).\]  It suffices to show
    the $\phi$-isotropic subspace $H_c^*(\Sigma_w^{\prime, \a}, \ov \BQ_\ell)[\phi]$ is trivial.

    For $\bar v \in \bar \BH_{\ov V, w}^{\ge a}$ we fix an isomorphism \[\l_{\bar v}: \bar \BH^{-\a} \overset \sim \longrightarrow (\bar \BT^\a)^{r(\a)}, \quad \z \to [\bar v, \z].\] Let \[\CH = \{t \in \bar \BT^{r(\a)}; t\i F\i(t) \in (\bar \BT^\a)^{r(\a)}\}.\] For $t \in \CH$ we define an isomorphism $f_t: \Sigma_w^{\prime, \a} \to \Sigma_w^{\prime, \a}$ by \[f_t: (x, x', v, \bar v, \t, u) \mapsto (x_t, x' F({}^{(\dw \t)\i} \z), t v t\i, t \bar v t\i, w\i(t) \t, u)\] with $\z = \l_{\bar v}\i(t F\i(t)\i)$ such that \[x_t F(\bar v \dw \t) = t v \bar v \dw \t u x' F({}^{(\dw \t)\i} \z).\] The induced map of $f_t$ on each subspace $H_c^i(\Sigma_w^{\prime, \a}, \ov \BQ_\ell)$ is trivial for $t \in N_F^{F^n}((\bar \BT^\a)^{F^n}) \subseteq \CH^\circ \cap ((\bar \BT^\a)^{r(\a)})^F$. Here $n \in \BZ_{\ge 1}$ such that $F^n(\bar\BT^\a) = \bar\BT^\a$, and $N_F^{F^n}: \bar \BT \to \bar \BT$ is the map given by $t \mapsto t F(t) \cdots F^{n-1}(t)$. On the other hand, we have \[\phi |_{N_F^{F^n}((\bar \BT^\a)^{F^n}} = \phi_{i(\a) - 1} |_{N_F^{F^n}((\bar \BT^\a)^{F^n}},\] which is nontrivial since $\phi_{i(\a)-1}$ is $(G^{i(\a)-1}, G^{i(\a)})$-generic. Here $i(\a)$ is the integer $0 \le i \le d$ such that $\a \in \Phi_{G^i} \sm \Phi_{G^{i-1}}$. Thus $H_c^*(\Sigma_w^{\prime, \a}, \ov \BQ_\ell)[\phi]$ is trivial as desired.
\end{proof}

\section{Pro-unipotent DL-variety for an elliptic torus}\label{sec:prounipotent}

Let $\BG^+ = L^+\CG^{0+}_{\bf x}$ be the pro-unipotent radical of $\BG = L^+\CG_{\bf x}$ (this corresponds to $r=\infty$ in the notation of \S\ref{notation:2}). Consider the perfect scheme
\[
X^+ = \{g \in \BG^+ \colon g^{-1}F(g) \in \ov\BU^+ \cap F\BU^+\}
\]
which is the the inverse limit of its truncations $X_r^+ \subseteq \BG^+_r := \BG^{0+}_r$. Then $X^+$ is acted on by $(\BG^+)^F \times (\BT^+)^F$ by left and right multiplication. In this section we are going to prove \Cref{thm:generalization_first_article}.

\subsection{Preparations}
Fix a total order on $\tPhi^+/\langle F\rangle$ such that $O<O'$, if either $O({\bf x}) < O'({\bf x})$ or ($O({\bf x}) = O'({\bf x})$ and $O \in \BZ_{\geq 1}$, $O' \not\in \BZ_{\geq 1}$). As $T$ is elliptic, any orbit $O \in \tPhi^+/\langle F\rangle$ intersects $\tD^+$, where $\D = \Phi^+ \cap F\Phi^-$. For each orbit $O$, pick some $f \in O \cap \tD^+$ and extend the order to a total order on $\tPhi^+$ in the unique way such that $f < F(f) < \dots<F^{\#O - 1}(f)$. For $f\in \tPhi^+$, denote by $O_f$ its $F$-orbit, and denote by $f+$ (resp. $f-$) any member of the orbit, which is the ascendant (resp. descendant) of $O_f$ with respect to the order on $\tPhi^+/\langle F \rangle$.

We use the setup from \cite[\S5.1-2]{IvanovNie_24}, which slightly differs from that of  \S\ref{sec:some_setup}. In this section for $f\in \tPhi^+$ we put
\[
\tPhi^f = \{f' \in \tPhi^+ \colon O_{f'} \geq O_f \}.
\]
Note that if $f=r\in \BZ_{\geq 1}$, then $\tPhi^f = \{f' \in\tPhi^+ \colon 0<f'({\bf x}) <r\}$, so our notation is not ambiguous. We let $\tPhi_f^+ = \tPhi^+ \sm \tPhi^f$. We let $\BG^f\subseteq \BG^+$ be the subgroup generated by the affine roots subgroups in $\tPhi^f$. It is easy to see that $\BG^f \subseteq \BG^+$ is normal and we put
\[\BG_f^+ = \BG^+/\BG^f.\]
Note that $\tPhi^f,\tPhi^+_f$ are $F$-stable, so that $\BG^f,\BG^+_f$ are defined over $\BF_q$.

Fix some $r\geq 1$. Let $\BA[r] = \prod_{f \in \tPhi_r^+} \BA_f$ (with $\BA_f$ as in \S\ref{sec:some_setup}). As in \eqref{eq:u} we have the isomorphism of varieties
\[
u\colon  \BA[r] \overset \sim \to \BG_r^+, \quad (x_f)_f \mapsto \prod_f u_f(x_f),
\]
where the product is taken with respect to the fixed order on $\tPhi^+$. For a subset $E \subseteq \tPhi^+_r$, set $\BA_E = \prod_{f \in E} \BA_f$, let $p_E \colon \BA[r] \to \BA_E$ denote the natural projection, and let $\pr_E \colon \BG_r^+ \to u(\BA_E)$ denote the map obtained from $p_E$ by transport of structure via $u$. For $f \in \tPhi_r^+$, write $p_f = p_{\{f\}}$ and $\pr_f = \pr_{\{f\}}$. When the context is clear, we sometimes will abuse the notation and identify $\pr_f \colon \BG^+_r \to u(\BA_f)$ with $u^{-1}\circ \pr_f \colon \BG_r^+ \to \BA_f$.

Let $f \in -\widetilde\D^+$. Then there exists a unique sequence
\begin{equation}\label{eq:sequence_of_sign_changes} 0 = a_0 < a_1 < \dots < a_{2b(f)}=\#O_f \end{equation}
of integers, such that $F^{a_{2i}}(f) \in \widetilde\Pi^+$, $F^{a_{2i-1}}(f) \in \widetilde\Delta^+$ for all $0\leq i \leq b(f)$, and $F^k(f) \not\in \widetilde\Delta^+ \cup -\widetilde\D^+$ if $k \not\equiv a_j \mod \#O_f$ for any $j$.

% We use the setup of \S\ref{sec:some_setup}. In particular, recall that we have a total order on $\widetilde\Phi^+$. Fix some $r \in \BZ_{\geq 1}$. Then \eqref{eq:u} gives a parametrization of $\BG^+_r$.

% As we follow the strategy of \cite{IvanovNie_24}, we use the setup from \cite[\S5.1-2]{IvanovNie_24}, which slightly differs from that of \S\ref{sec:some_setup}. We use the partial order on $\tPhi^+$ from \S\ref{sec:some_setup}.

\subsection{A cartesian diagram} Fix some $r \in \BZ_{\geq 1}$. Let $\widetilde\Phi^r \subseteq B \subseteq A \subseteq \widetilde\Phi^+$ be two closed subsets with $A+B \subseteq B$, $\BZ_{\geq 0} + A \subseteq A$, $\BZ_{\geq 0} + B \subseteq B$, so that $\BG_r^B \subseteq \BG_r^A$ are subgroups, and the smaller one is normal in the bigger one. Put
\[
X_B^A = \{g \in \BG_r^A \colon g^{-1}F(g) \in (\overline \BU_r^{+} \cap F\BU_r^{+}) \cdot \BG_r^B \} / \BG_r^B.
\]
If $A = \widetilde\Phi^+$, $B = \widetilde\Phi^f$ for some $f \in \tPhi^+$, then we write $X_f^+ = X_f^A$. For any character $\chi \colon (\BT_r^{+} \cap \BG_r^A)^F \to \overline\BQ_\ell^\times$, we have the $\chi$-weight complex $R\Gamma_c(X_B^A,\Lambda)[\chi] = R\Gamma_c(X_B^A,\Lambda) \otimes_{\Lambda(\BT_r^{+} \cap \BG_r^A)^F} \chi$. Just as in \cite[\S5.2]{IvanovNie_24} we have the map
\[
\pi_f^{A:B} = u^{-1} \circ \pr_f \circ L \circ s_{A:B}\colon \BG_r^A/\BG_r^B \to \BA_{\CO_f}.
\]

Our first observation is that \cite[Proposition 5.3]{IvanovNie_24} admits the following generalization.

\begin{proposition} \label{prop:cartesian} Let $\widetilde\Phi^r \subseteq C \subseteq B \subseteq A \subseteq \widetilde\Phi^+$ be closed subsets with $A+B \subseteq B$, $A+C \subseteq C$. Let $f \in B$ and suppose that $C = B \sm \CO_f$ and  $A+\CO_f \subseteq C$. Let $q_f \colon X_C^A \to X_B^A$ denote the natural projection. Then the following hold.

(1) Suppose that $f \in \D_\aff^+$. Then the map
\[
\psi = (q_f, \pr_f, \pr_{F^{a_2}(f)}, \dots,\pr_{F^{a_{2b(f)-2}}(f)}): X_C^A \cong X_B^A \times \prod_{i=0}^{b(f)-1} \BA_{f}
\]
is an isomorphism.

(2) If $f \in \BZ_{\ge 1}$ (in which case $\BA_{\CO_f} = \BA_f = V$), then there is a Cartesian diagram \[ \xymatrix{
X_C^A \ar[d]_{q_f} \ar[r]^{\pr_f} &  \BA_f \ar[d]_{-L} \\
X_B^A \ar[r]^{\pi^{A:B}_f} &  \BA_f.} \]
% Here $q_f$ denotes the natural projection.
\end{proposition}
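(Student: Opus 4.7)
The plan is to analyze, in both cases simultaneously as far as possible, the fiber $q_f^{-1}(g)$ over a fixed $g \in X_B^A$, following the blueprint of \cite[Proposition 5.3]{IvanovNie_24} and generalizing it. The decisive structural input is the hypothesis $A + \CO_f \subseteq C$: it forces $[\BG_r^{\CO_f},\BG_r^A] \subseteq \BG_r^C$, so that elements supported on the orbit $\CO_f$ commute freely past arbitrary elements of $\BG_r^A$ modulo $\BG_r^C$. This will let me isolate the component-wise effect of modifying a lift $\tilde g \in \BG_r^A$ of $g$ by an element of $u(\BA_{\CO_f})$.

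Concretely, I would first pick a lift $\tilde g \in \BG_r^A$ of $g$ and write every lift $\tilde g' \in \BG_r^A/\BG_r^C$ of $g$ uniquely in the form $\tilde g \cdot \prod_{k=0}^{\#\CO_f-1} u_{F^k(f)}(x_k)$ with $x_k \in \BA_{F^k(f)}$, the product being taken in the fixed order on $\tPhi^+$. Computing $L(\tilde g')$ modulo $\BG_r^C$ via the commutator vanishing above, I expect an expression of the shape
\[
L(\tilde g')_{F^k(f)} \equiv \pi_{F^k(f)}^{A:B}(g) + F_\BA(x_{k-1}) - x_k \pmod{\BG_r^C},
\]
with indices read cyclically and $F_\BA \colon \BA_{F^{k-1}(f)} \to \BA_{F^k(f)}$ the natural Frobenius on coefficients. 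The condition $\tilde g' \in X_C^A$ then amounts to the vanishing of this expression at every index $k$ for which $F^k(f) \notin \tD_\aff^+$.

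For part (2), where $\CO_f = \{f\}$ is a single point and $\BA_f = V$, the cyclic system degenerates to the single equation $x - F(x) = -\pi_f^{A:B}(g)$, which is exactly the pullback description with the map $-L \colon V \to V$. So part (2) reduces to this direct computation. For part (1), the orbit is partitioned cyclically by the distinguished indices $a_0 < a_1 < \dots < a_{2b(f)-1}$ into arcs on which the constraint pattern is constant; I would propagate the linear recursion arc-by-arc, taking one free parameter at each of the indices $a_0, a_2, \dots, a_{2b(f)-2}$ and solving for all remaining $x_k$ from it. The projection $\psi = (q_f,\pr_f,\pr_{F^{a_2}(f)},\dots,\pr_{F^{a_{2b(f)-2}}(f)})$ is then built from these free parameters.

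The main obstacle I foresee is the cyclic consistency of the recursion: showing that once one propagates the $x_k$ all the way around the orbit, one lands back at the chosen free parameter without picking up any hidden constraint, so that $\psi$ is a bijection rather than just a surjection onto a quotient. This is where the convexity of $w\s$ enters essentially, via \Cref{uniformization} and \Cref{Steinberg} applied to the $F$-action on $H_{\CO_f}$; these results precisely guarantee unique solvability of exactly this type of cyclic system on a convex orbit. A secondary but nontrivial point will be the sign-change bookkeeping needed to see that the free parameters land at the indices $a_{2i}$ (and not at some shifted set), which follows from the alternation of $\tilde\Pi^+$ and $\tD^+$ at the $a_j$ encoded in the definition of the sequence.
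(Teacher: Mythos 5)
Your structural plan is essentially the paper's: follow the blueprint of \cite[Proposition 5.3]{IvanovNie_24}, lift a point of $X_B^A$, write all lifts as $\tilde g\cdot\prod_k u_{F^k(f)}(x_k)$, reduce $L(\tilde g')$ componentwise modulo $\BG_r^C$ using that $A+\CO_f\subseteq C$ kills commutators, and solve the resulting cyclic linear system indexed by $\CO_f$.  The paper's own proof is exactly this, with the inverse to $\psi$ written out explicitly and the cyclic consistency delegated to \Cref{lm:5_4}, proved by a direct computation.

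However, your mechanism for resolving the cyclic consistency is wrong, and this is a genuine gap.  You invoke convexity of $w\sigma$ through \Cref{uniformization} and \Cref{Steinberg} as the ingredient guaranteeing unique solvability of the cyclic system.  But \Cref{prop:cartesian} has no convexity hypothesis, and indeed it cannot: it feeds \Cref{coh} and ultimately Theorem~\ref{thm:generalization_first_article}(1)--(2), which the paper proves \emph{without} convexity (only part (3) of that theorem assumes it).  The two propositions you cite concern the $x$-twisted Frobenius $w\mapsto\phi(w)-F(w)$ acting on a full level-slice $H_A$ of roots with $x\in W\sigma$ convex; they have nothing to do with the combinatorics of a single $F$-orbit $\CO_f$ inside $\tPhi^+$, which is what is at stake here.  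The cyclic system on $\CO_f$ is solved purely by the sign-alternation pattern encoded in the sequence $0=a_0<a_1<\dots<a_{2b(f)}=\#\CO_f$ of \eqref{eq:sequence_of_sign_changes}, which exists because $T$ is elliptic (so every orbit meets $\tD^+$); there are exactly $b(f)$ constraint-free indices and $\#\CO_f - b(f)$ linear constraints, and the recursion $x_{F^j(f)}=F^{j-a_{2i}}(x_{F^{a_{2i}}(f)})$ visibly closes up.  That is the content of \Cref{lm:5_4}, and it is an elementary induction along the orbit, not an application of convexity.  If you try to make your argument rigorous by appealing to \Cref{uniformization}, you will find that the hypotheses (a convex $x$, the sets $A,B$ of \S\ref{sec:Lie_alg}, the $\Ad_\beta$ maps) simply do not match the situation of a single $F$-orbit, and you would be importing an assumption the proposition does not have.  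Replace that step by verifying the cyclic recursion directly along the arcs $[a_{2i},a_{2i+2})$ and the proof goes through.
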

\begin{proof}
The proof is the same as in \cite[Proposition 5.3]{IvanovNie_24}, with the only difference that in (1) the map inverse to $\psi$ is given by
\begin{align*}
\phi(g,y_f, &y_{F^{a_2}(f)}(y_f), \dots, y_{F^{a_{2b(f)-2}(f)}(f)}) \\
&=s_{A:B}(g) \prod_{j=0}^{a_2-1}F^j(u(y_f)) \cdot \prod_{j=a_2}^{a_4-1}F^j(u(y_{F^{a_2}(f)})) \cdot \dots \cdot \prod_{j=a_{2b(f)-2}}^{a_{2b(f)}-1}F^j(u(y_{F^{a_{2b(f)-2}}(f)})),
\end{align*}
and instead of \cite[Lemma 5.4]{IvanovNie_24} we use Lemma \ref{lm:5_4}.
\end{proof}
\begin{lemma}\label{lm:5_4}
Let $f \in -\D_\aff^+$ and let $x = (x_i)_{0\leq i < \#\CO_f} \in \BA_{\CO_f}$ with $x_i \in \BA_{F^i(f)}$. Suppose that $L(x) \in \prod_{i=0}^{b(f)-1}\BA_{F^{a_{2i}}(f)}$. Then for each $0\leq j <\#\CO_f$, $x_{F^j(f)} = F^{j-a_{2i}}(x_{F^{a_{2i}}(f)})$ for $a_{2i} \leq j < a_{2i+2}$. In particular, 

(1) $L(x)_{f} = x_{a_{2b(f)-2}}^{q^{a_{2b(f)} - a_{2b(f)-2}}} - x_0$, $L(x)_{a_{2i}} = x_{a_{2i-2}}^{q^{a_{2i} - a_{2i-2}}} - x_{2i}$ for $0< i < b(f)$, and $L(x)_j = 0$ if $j \neq a_{2i}$ for any $i$; 

(2) $x=0$ if and only if $x_{a_{2i}} = 0$ for all $0\leq i<b(f)-1$.
\end{lemma}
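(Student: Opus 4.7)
The plan is to reduce the claim to a componentwise statement about the Lang map on the parametrization $\BA_{\CO_f}$ of the orbit, and then to iterate. First, I would unpack what $L(x)$ means coordinate by coordinate. All affine roots in $\CO_f$ share the same value $f(\bx)$ and hence lie at a single Moy--Prasad depth; in particular, any non-trivial sum $F^j(f) + F^{j'}(f)$ with $j \neq j'$ either fails to be an affine root or lives in a strictly deeper layer. Working modulo that deeper layer, the root subgroups $u_{F^j(f)}(\BA_{F^j(f)})$ mutually commute, so $x = (x_j)_j$ gives a well-defined element $\hat x := u(x)$ whose Lang image $L(\hat x) := \hat x \i F(\hat x)$ admits a coordinate description in $\BA_{\CO_f}$. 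Combined with the Frobenius rule $F(u_g(c)) = u_{F(g)}(c^q)$, a direct computation in the abelian quotient yields
\[
L(x)_{F^j(f)} = x_{F^{j-1}(f)}^{\,q} - x_{F^j(f)} \quad (1 \le j < \#\CO_f), \qquad L(x)_f = x_{F^{\#\CO_f - 1}(f)}^{\,q} - x_f.
\]

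Next, I would feed in the hypothesis that $L(x)$ is supported on the indices $F^{a_{2i}}(f)$. For each $j$ with $a_{2i} < j < a_{2i+2}$ one has $L(x)_{F^j(f)} = 0$, hence $x_{F^j(f)} = x_{F^{j-1}(f)}^{\,q}$; induction on $j$ starting from $j = a_{2i}$ then gives $x_{F^j(f)} = F^{j - a_{2i}}(x_{F^{a_{2i}}(f)})$ for all $a_{2i} \le j < a_{2i+2}$. Plugging this description back into the componentwise Lang formula at the surviving indices $j = a_{2i}$ (for $0 < i < b(f)$), and wrapping around at $j = 0$ using $a_{2b(f)} = \#\CO_f$, yields exactly the formulas in part~(1). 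Part~(2) then falls out: the iterated identity shows that every coordinate on the arc $[a_{2i}, a_{2i+2})$ vanishes as soon as its left endpoint $x_{F^{a_{2i}}(f)}$ does, and the converse direction is immediate.

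The main obstacle---really the only step requiring any care---is this first reduction: justifying that modulo the next Moy--Prasad layer the root subgroups indexed by $\CO_f$ genuinely commute (so that $L$ admits the above additive coordinate expression on $\BA_{\CO_f}$) and that no Teichm\"uller correction terms intrude in the Frobenius rule $F(u_g(c)) = u_{F(g)}(c^q)$. Once this is pinned down, everything else is the purely combinatorial iteration above---entirely in the spirit of \cite[Lemma~5.4]{IvanovNie_24}, with the single breakpoint of the Coxeter case replaced by the full sequence $a_0 < a_1 < \dots < a_{2b(f)}$ recording the alternations between $\widetilde\Pi^+$ and $\widetilde\Delta^+$ along the $F$-orbit of $f$.
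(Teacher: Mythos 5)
Your proposal is correct and matches the (unwritten) ``direct computation'' the paper invokes: express the Lang map componentwise on the abelianized orbit $\BA_{\CO_f}$ using $F(u_g(c)) = u_{F(g)}(c^q)$, use the support hypothesis to get $x_{F^j(f)} = x_{F^{j-1}(f)}^q$ at every non-anchor index, and iterate along each arc $[a_{2i},a_{2i+2})$ to obtain the stated formulas and the two corollaries. The commutativity and absence-of-correction issues you single out as the only real obstacle are automatic in the paper's setup: the $\CO_f$-root subgroups are central in the relevant Moy--Prasad layer (because $A+\CO_f\subseteq C$ in the situation where the lemma is applied), and the Teichm\"uller lift satisfies $F([c])=[c^q]$, so no extra terms arise.
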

\begin{proof}
The proof is a direct computation.
\end{proof}

For a character $\chi \colon (\BT_{f+}^+)^F \to \Lambda^\times$ we denote by $\chi_{f+}^f$ the restriction of $\chi$ to $(\BT_{f+}^f)^F$. As in \cite[Corollary 5.9]{IvanovNie_24}, the previous proposition implies the following.

\begin{corollary} \label{coh}
Let $f \in \tPhi^+$ and let $\chi \colon (\BT_{f+}^+)^F \to \Lambda^\times$ be a character.

(1) If $f \in \Phi_\aff^+$, then $R\Gamma_c(X^+_{f+}, \overline \BQ_\ell)[\chi] \cong R\Gamma_c(X^+_f, \overline \BQ_\ell)[\chi][-2b(f)]$.

(2) If $f \in \BZ_{\ge 1}$, then $R\Gamma_c(X^+_{f+}, \overline \BQ_\ell)[\chi_{f+}^f] \cong R\Gamma_c(X^+_f, \pi^*(\CL_{\chi_{f+}^f}))$, and hence \[R\Gamma_c(X^+_{f+}, \Lambda)[\chi] \cong R\Gamma_c(X^+_f, \pi^*(\CL_{\chi_{f+}^f}))[\chi],\] where we write $\pi = \pi^{\tPhi^+:\tPhi^f}_f$.
% $R\Gamma_c(X^+_{f+}, \overline \BQ_\ell)[\chi_{f+}^f] = R\Gamma_c(X^+_{f+}, \overline \BQ_\ell) \otimes^L ... $ is the $\chi_{f+}^f$-weight space of $(\BT_{f+}^f)^F$.
\end{corollary}

Write $H_i(X^+,\Lambda)[\chi] = H^{-i}(h_\natural\Lambda[\chi])$, where $h \colon X^+ \to \Spec \overline\BF_q$ is the structure map. As in \cite[Corollary 5.10, \S2.7]{IvanovNie_24}, \Cref{coh} implies that $H_i(X^+,\Lambda)[\chi] = H_c^{2d_r-i}(X^+_r,\Lambda)[\chi]$ for all $r \geq $ the depth of $\chi$, where $d_r$ is the dimension of $X^+_r$. In this way, Theorem \ref{thm:generalization_first_article}(1),(2) reduce to the following results stated in terms of the usual \'etale cohomology with compact support.

\begin{theorem}\label{thm:coho_formulation}
Assume $p$ is not a torsion prime for $G$. Let $f \in \tPhi^+$ and let $\chi \colon (\BT_f^+)^F \to \Lambda^\times$ be a character. Then there exists some $s = s_{f,\chi} \in \BZ_{\geq 0}$ (independent of $\Lambda$) such that
\[ H_c^i (X^+_f,\Lambda)[\chi] \neq 0 \quad \Leftrightarrow \quad i=s. \]
Moreover, $H_c^s (X^+_f,\Lambda)[\chi]$ is a free $\Lambda$-module on which $F^N$ acts through the scalar $(-1)^{s'} q^{sN/2}$ for some $s' \in \BZ$.
\end{theorem}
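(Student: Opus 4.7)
The plan is to argue by induction on $f \in \tPhi^+$ in the fixed total order, following the strategy of \cite[\S5]{IvanovNie_24}. For the minimal $f$ one has $\tPhi^f = \tPhi^+$, so $X^+_f$ is a point and the theorem holds trivially with $s = 0$. For the inductive step we pass from $f$ to $f+$, using \Cref{coh} to express the cohomology of $X^+_{f+}$ in terms of that of $X^+_f$.

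If $f \in \Phi_\aff^+$, then the product decomposition $X^+_{f+} \cong X^+_f \times \prod_{i=0}^{b(f)-1}\BA_f$ of \Cref{prop:cartesian}(1), together with the K\"unneth formula, gives the isomorphism of \Cref{coh}(1) in the refined form $H^i_c(X^+_{f+},\cool)[\chi] \cong H^{i-2b(f)}_c(X^+_f,\cool)[\chi]$ with an extra Tate twist by $b(f)$. The inductive hypothesis then yields concentration at $s_{f+,\chi} = s_{f,\chi} + 2b(f)$ with $F^N$-scalar $(-1)^{s_{f+,\chi}} q^{s_{f+,\chi} N/2}$, the sign being preserved since $2b(f)$ is even.

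The substantive case is $f \in \BZ_{\geq 1}$, a depth jump, where \Cref{coh}(2) identifies $H^i_c(X^+_{f+},\cool)[\chi]$ with $H^i_c(X^+_f, \pi^*\CL_{\chi_{f+}^f})[\chi]$ for $\pi \colon X^+_f \to V = X_\ast(T) \otimes \overline\BF_q$. If the restriction $\chi_{f+}^f$ is trivial, the pulled-back local system is constant and the right-hand side identifies with the $\chi$-isotypic part of $H^i_c(X^+_f,\cool)$ (for $\chi$ viewed as a character of the quotient torus), and the inductive hypothesis applies without shift. If $\chi_{f+}^f$ is nontrivial we claim $H^*_c(X^+_f,\pi^*\CL_{\chi_{f+}^f}) = 0$; to establish this we construct a $\BG_a$-subgroup of $\BG^+$ acting freely on $X^+_f$ that is $\pi$-equivariant for a nontrivial translation on $V$ detected by $\chi_{f+}^f$, and apply \Cref{criterion} (the conclusion of the theorem being then interpreted as ``concentrated in at most one degree'').

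The principal obstacle is the construction of this $\BG_a$-action for a general elliptic (not necessarily Coxeter) torus. In \cite[\S5]{IvanovNie_24} the Coxeter structure on $T$ supplied an explicit $F$-stable decomposition of $V$ into cyclic pieces, on each of which the required translation was directly realized by an affine root subgroup. For a general elliptic torus the $\<F\>$-orbit structure on $V$ is coarser, and this is precisely where the hypothesis that $p$ is not a torsion prime for $G$ enters: it rules out $p$-torsion obstructions in the isogeny $T_{\rm sc} \to T_{\rm der}$ and guarantees that $X_\ast(T_{\rm sc}) \otimes \overline\BF_q$ surjects onto $V$ modulo the kernel of $\chi_{f+}^f$, so that the desired direction in $V$ is reachable by a coroot-generated affine root subgroup of $\BG^+$. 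Together with ellipticity this produces the sought-for $\BG_a$-action and completes the vanishing argument; the $F^N$-eigenvalue formula is maintained at each inductive step by the Tate-twist bookkeeping above.
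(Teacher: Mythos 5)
Your proposal diverges fundamentally from the paper's argument at the key inductive step, and the divergence opens a genuine gap: the claimed dichotomy at a depth jump is false.

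Concretely, at a jump $f \in \BZ_{\ge 1}$ you assert that if the restriction $\chi_{f+}^{f}$ to the new torus slice is \emph{nontrivial} then $H^*_c(X^+_f,\pi^\ast\CL_{\chi_{f+}^{f}})$ vanishes outright (via a $\BG_a$-action and \Cref{criterion}), whereas if it is trivial the inductive hypothesis applies without shift. This cannot be right: for nontrivial $\chi$ the theorem and all of its applications produce \emph{nonzero} cohomology concentrated in one degree (e.g.\ at level $1$ one obtains Heisenberg-type representations). The actual behaviour at a jump is not controlled by whether $\chi_{f+}^{f}$ is trivial but by which roots $\alpha$ satisfy $\chi \circ \Nm_N(\alpha^\vee \otimes \BF_{q^N}) = \{1\}$. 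The paper encodes this in the subsystem $\Phi_\chi$ and the subgroup $M_\chi$: orbits of affine roots with vector part outside $\Phi_\chi$ at height strictly below $h/2$ contribute only a finite multiplicity (Proposition~\ref{key}(1)), while the self-paired orbits at height exactly $h/2$ are the ones contributing a nonzero degree shift and the Frobenius eigenvalue (Proposition~\ref{key}(2)). No global $\BG_a$-fibration kills the middle orbits, so the fiberwise vanishing you envision does not occur. The correct inductive structure is then on the \emph{group} — one passes from $G$ to the proper subgroup $M_\chi$, not from $f$ to $f+$ at the jump — and Propositions~\ref{prop:factor} and~\ref{key} carry out exactly this reduction.

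You have also misidentified where the torsion-prime hypothesis enters. You propose it is used to make $X_\ast(T_{\rm sc}) \otimes \overline\BF_q$ cover $V$ modulo $\ker\chi_{f+}^{f}$; in the paper it is used in Lemma~\ref{lm:closed_addition} to show that $\Phi_\chi$ is closed under addition (the ratios of root lengths $n_{\a,\b} \in \{1,2^{\pm1},3^{\pm1}\}$ being invertible), which is what makes $M_\chi$ a genuine reductive subgroup and makes the reduction to $M_\chi$ meaningful. Without that closure the whole inductive scheme collapses. Two smaller points: the shift in your case $f \in \Phi^+_{\aff}$ should indeed be $2b(f)$ as dictated by Proposition~\ref{prop:cartesian}(1) (so your bookkeeping there is internally consistent, even if \Cref{coh}(1) looks like it omits the factor $b(f)$), but since the middle-orbit contribution to the sign and eigenvalue in Proposition~\ref{key}(2) is $(-1)^{\sum_{i=m}^n N/\#O_i}$ rather than $(-1)^{\text{shift}}$, the ``Tate-twist bookkeeping'' you invoke is not by itself enough to control the sign.
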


We will prove this theorem in \S\ref{sec:proof_coho_thm} first in the case when $\Lambda \in \{L,\lambda\}$ and then for $\Lambda = \CO_L$. From now on until the end of \S\ref{sec:jumps} we assume that $\Lambda \in \{L,\lambda\}$. In particular, all cohomology groups are vector spaces over a field and $H^i(X,\Lambda)[\chi]$ is simply the usual (non-derived) $\chi$-weight part $H^i(X,\Lambda) \otimes_{\Lambda(\BT_r^+)^F} \chi$.

\subsection{Reduction to semisimple simply connected case}\label{sec:sssc_case}
Let $\widetilde G \to G$ be the simply connected cover of $G$. Identify the reduced buildings of $G$ and $\widetilde G$ and write $\widetilde G, \widetilde T, \widetilde U, \widetilde \BT_r, \widetilde \BU_r, \widetilde X^+_f, \dots$ for the objects corresponding to $\widetilde G$.

Following \cite[1.24]{DeligneL_76}, if $\alpha \colon A \to B$ is a homomorphism of finite groups and $Y$ is a space (scheme or fpqc-sheaf) on which $A$ acts, we let the \emph{induced space} ${\rm Ind}_A^B Y$ be the (unique up to a unique isomorphism) $B$-space $I$ equipped with an $A$-equivariant map $Y \to I$ such that $\Hom_B(I,V) = \Hom_A(Y,V)$ for any $B$-space $V$. (Minor variation of) the following statement already appears in \cite[proof of Lemma 4.3.3]{DI} without proof. We give the proof here.

\begin{lemma} \label{lm:induced_space}
We have $X^+_f = {\rm Ind}_{(\widetilde \BT_f^+)^F}^{(\BT_f^+)^F} \widetilde X^+_f$.
\end{lemma}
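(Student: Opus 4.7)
The plan is to realize the induced-space isomorphism concretely, using the simply connected cover $\iota \colon \widetilde G \to G$, whose kernel $Z_1 \subseteq Z(\widetilde G)$ is finite central and whose image is $G_{\rm der}$. Passing to pro-unipotent radicals we obtain an $F$-equivariant morphism $\iota \colon \widetilde \BG^+ \to \BG^+$ with finite central kernel $Z_1^+ := Z_1 \cap \widetilde \BG^+ \subseteq \widetilde \BT^+$. Since $\overline\BU^+ \cap F\BU^+ \subseteq G_{\rm der}$ lifts uniquely to $\widetilde G$ (unipotent subgroups lift to the simply connected cover) and $\iota^{-1}(\BG^f) = \widetilde \BG^f \cdot Z_1^+$, the morphism $\iota$ sends $\widetilde X^+_f$ into $X^+_f$, equivariantly along the natural map $\alpha \colon (\widetilde \BT^+_f)^F \to (\BT^+_f)^F$. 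Combined with the right $(\BT^+_f)^F$-action on $X^+_f$, this yields a morphism $\Phi(t, \widetilde g) = \iota(\widetilde g) \cdot t$ on $(\BT^+_f)^F \times \widetilde X^+_f$ which is invariant under the diagonal action $\widetilde t \cdot (t, \widetilde g) = (t\alpha(\widetilde t)^{-1}, \widetilde g \widetilde t)$ of $(\widetilde \BT^+_f)^F$, hence descends to
\[
\overline \Phi \colon (\BT^+_f)^F \times^{(\widetilde \BT^+_f)^F} \widetilde X^+_f \longrightarrow X^+_f.
\]
The lemma is the assertion that $\overline\Phi$ is an isomorphism.

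The heart of the proof is to show $\overline\Phi$ is a bijection on $\overline\BF_q$-points. For surjectivity, given $g \in X^+_f(\overline\BF_q)$, consider its class $\overline g$ in the cokernel of $\iota$, which is the $f$-truncation of the pro-unipotent radical of the quotient torus $S = G/G_{\rm der}$. Since $\overline\BU^+ \cap F\BU^+$ maps trivially into $S$, the defining condition $g^{-1}F(g) \in (\overline\BU^+ \cap F\BU^+)\BG^f$ forces $\overline g$ to lie in the $F$-fixed part of this cokernel. Lang's theorem, applied to the connected pro-unipotent $\BF_q$-group $\iota(\widetilde \BT^+_f)$ in the sequence $1 \to \iota(\widetilde\BT^+_f) \to \BT^+_f \to S^+_f \to 1$, gives surjectivity of $(\BT^+_f)^F$ onto the $F$-fixed part of the cokernel, so we lift $\overline g$ to some $t \in (\BT^+_f)^F$. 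Then $g t^{-1}$ lies in $\iota(\widetilde\BG^+_f)$, and lifts (modulo $Z_1^+$) to some $\widetilde g \in \widetilde \BG^+$; a direct check using that $t \in (\BT^+_f)^F$ acts centrally on the coset $(\overline\BU^+ \cap F\BU^+)\BG^f$ confirms $\widetilde g \in \widetilde X^+_f$, so $\overline\Phi([t, \widetilde g]) = g$. Injectivity is the symmetric unfolding: if $\iota(\widetilde g_1)t_1 = \iota(\widetilde g_2)t_2$, then $t_2t_1^{-1} \in \iota(\widetilde \BT^+_f)^F$ lifts (modulo $Z_1^+$) to a unique $\widetilde t \in (\widetilde \BT^+_f)^F$ intertwining the two pairs in the quotient.

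Having bijectivity on $\overline\BF_q$-points between perfectly smooth schemes of equal dimension, we conclude $\overline\Phi$ is an isomorphism of perfect $\BF_q$-schemes. The main obstacle will be the careful bookkeeping around the finite central kernel $Z_1^+$: because $H^1(F, Z_1^+)$ need not vanish, lifting $gt^{-1}$ to $\widetilde \BG^+$ is ambiguous, and one must verify that any two such lifts differ by an element of $Z_1^+ \subseteq \widetilde\BT^+$ which acts on $\widetilde X^+_f$ via the right $(\widetilde\BT^+_f)^F$-action, so that the resulting class in $(\BT^+_f)^F \times^{(\widetilde\BT^+_f)^F}\widetilde X^+_f$ is independent of choices. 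Ellipticity of $T$ (standing assumption of this section) enters by ensuring the quotient torus $S$ is itself elliptic, so that the pro-unipotent structures interact cleanly with $F$.
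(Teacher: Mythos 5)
Your overall strategy matches the paper's: write an arbitrary $g \in X^+_f$ as a product of a torus element and an element coming from the simply connected cover, then use Lang's theorem to adjust the torus factor to lie in $(\BT^+_f)^F$. However, there is a genuine conceptual gap at the center of your argument, and it is precisely the feature that makes this lemma work cleanly for the pro-unipotent varieties $X^+$ but not for the full~$X$.

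You introduce $Z_1^+ := Z_1 \cap \widetilde\BG^+$ and flag its nontriviality (and the nonvanishing of $H^1(F,Z_1^+)$) as ``the main obstacle.'' But under the standing hypotheses the key point, which the paper states at the outset of its proof and emphasizes in the Remark that immediately follows, is that $\widetilde\BG^+_f \to \BG^+_f$ and $\widetilde\BT^+_f \to \BT^+_f$ are \emph{injective}: the finite central kernel $Z_1 = \ker(\widetilde G \to G)$ sits inside the maximal torus at depth zero, so it meets the pro-unipotent radical trivially on the level of perfect schemes. Thus $Z_1^+ = 1$, there is nothing to ``lift modulo,'' and one is free to identify $\widetilde\BG^+_f$ with its image and form honest quotients $\BG^+_f/\widetilde\BG^+_f \cong \BT^+_f/\widetilde\BT^+_f$. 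By failing to notice this you turn a one-line observation into a phantom difficulty, and your sketch of the deferred bookkeeping is itself off: two lifts of $gt^{-1}$ to $\widetilde\BG^+$ differing by a non-$F$-fixed element $z \in Z_1^+$ would not both lie in $\widetilde X^+_f$ (since $(z\widetilde g)^{-1}F(z\widetilde g) = \widetilde g^{-1}F(\widetilde g)\,z^{-1}F(z)$), so it is not true as stated that the ambiguity is absorbed by the right $(\widetilde\BT^+_f)^F$-action. In short, the defect is real: you would have been forced to discover the injectivity eventually, and the proof should pivot on it rather than route around it.

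Two smaller points. The claim that ellipticity of $T$ ``ensures the quotient torus $S = G/G_{\rm der}$ is itself elliptic'' is both unused and false (take $G = \GL_2$ with $T$ the nonsplit torus: $S \cong \BG_m$ is split); all you use about $\iota(\widetilde\BT^+_f)$ is that it is connected, which is what Lang's theorem requires. And ``$t$ acts centrally on the coset $(\overline\BU^+ \cap F\BU^+)\BG^f$'' should be ``$t$ normalizes'' that coset -- a torus element is not central in $\BG^+$.
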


\begin{proof}
% We omit the level $f$ from notation, and write $G_{\bf x}, Y$ for $\BG_f, Y_f$ etc.
The kernel of $\widetilde G \to G$ is contained in the center of $\widetilde G$. Thus the maps $\widetilde \BT^+_f \to \BT^+_f$ and $\widetilde \BG^+_f \to \BG^+_f$ are injective, and we identify $\widetilde \BT^+_f$, $\widetilde \BG^+_f$ with their images. Also, $\BG^+_f/\widetilde \BG^+_f \cong \BT^+_f/\widetilde \BT^+_f$; we denote this group by $C$.

Note that any $g \in X^+_f$ can be written as $g = \tau_1 g_1$ with $g_1 \in \widetilde \BG^+_f$ and $\tau_1 \in \BT^+_f$. Then $g_1^{-1} \tau_1^{-1}F(\tau_1)F(g_1) = g^{-1}F(g) \in \overline \BU^+_f \cap F\BU^+_f\subseteq \widetilde \BG^+$. As $\widetilde \BG_f^+$ is normal in $\BG_f^+$, it follows that $\tau_1 = F(\tau_1) \in C$, i.e., $\tau_1 \in C^F$. But as $\widetilde \BT_f^+$ is connected, we have $(\BT^+)^F / (\widetilde \BT^+)^F = C^F$. Thus, changing $g_1$ and $\tau_1$ if necessary, we may achieve that $\tau_1 \in (\BT^+_f)^F$. But then it is clear that $g_1^{-1}F(g_1) = g^{-1}F(g) \in \overline \BU^+_f \cap F\BU^+_f$, which implies that $g_1 \in \widetilde X^+_f$. Thus $X^+_f \cong \coprod_{\tau \in (\BT_f^+)^F/(\widetilde \BT_f^+)^F} \tau \widetilde X_f^+$, which is precisely the induced space.
\end{proof}

\begin{remark}
The analogous statement holds for $\BG, \BT, X =\{g \in \BG \colon g^{-1}F(g) \in \overline \BU \cap F\BU\} $ instead of $\BG^+, \BT^+,X^+$. There, $\widetilde \BG \to \BG$ can be non-injective, and its kernel equals the \emph{perfection} of $\ker(\widetilde G \to G)$. The situation with cokernels is similar as in the above proof.
\end{remark}

\begin{example}
If $k=\BF_2(\!(\varpi)\!)$, $G = {\rm PGL}_2$, $\bx$ hyperspecial, then $\widetilde G = \SL_2$, the maps $\widetilde\BG \to \BG$, $\widetilde \BT \to \BT$ are injective with cokernels isomorphic to $C = H^1((\Spec \overline\BF_2[\![\varpi]\!])_{\rm fppf}, \mu_2) = {\rm coker}(\overline\BF_2[\![\varpi]\!]^\times \stackrel{(\cdot)^2}{\to} \overline\BF_2[\![\varpi]\!]^\times)$, and $C^F$ is an infinite-dimensional $\BF_2$-vector space.
\end{example}

It follows that if $\chi \colon (\BT_f^+)^F \to \Lambda^\times$ is a character and $\widetilde\chi$ is it's pullback to a character of $(\widetilde \BT_f^+)^F$, then $H_c^i(X^+_f,\Lambda)[\chi] = \bigoplus_{(\BT_f^+)^F/(\widetilde \BT_f^+)^F}H_c^i(\widetilde X^+_f,\Lambda)[\widetilde \chi]$ as vector spaces with Frobenius action. In particular, if Theorem \ref{thm:coho_formulation} holds for $\widetilde X_f^+$, then it holds for $X_f^+$.

\subsection{Handling jumps}\label{sec:jumps}

% In the Coxeter case all $F$-orbits in $\Phi$ had the same length. This is not true in general.

As in \cite[\S5.6]{IvanovNie_24}, fix a positive integer $h \le r$ and a character $\chi$ of $(\BT_{h+}^+)^F$. Recall that $\BT_{h+}^h \cong \BA_h = V = X_*(T) \otimes \overline \BF_q$, so that for any $M \geq 1$ we have the map
\[
\Nm_M: V \to V, ~ v \mapsto v + F(v) + \cdots + F^{M-1}(v).
\]
Note that for $\alpha \in \Phi$, the subgroup $\Nm_M (\a^\vee \otimes \BF_{q^M})$ is independent of the choice of the integer $M \in \BZ_{\geq 1}$ satisfying $F^M(\alpha) = \alpha$. Using the character $\chi$ we define the $F$-stable subset
\[
\Phi_{\chi} = \{\a \in \Phi; \chi \circ \Nm_N (\a^\vee \otimes \BF_{q^N}) = \{1\}\}.
\]
of $\Phi$. Clearly, $-\Phi_\chi = \Phi_\chi$. However, $\Phi_\chi$ does not need to be closed under addition. This fact as well as (essentially) the proof of the following lemma  was explained to us by David Schwein.

\begin{lemma}\label{lm:closed_addition} If $p>3$ or if $p$ is not a torsion prime for $G$, then $\Phi_\chi$ is closed under addition.
% and $G$ is semisimple and simply connected
% , hence is root subsystem of $\Phi$ (is it then automatically a Levi subsystem?).
\end{lemma}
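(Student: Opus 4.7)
The plan is to reduce closure under addition to an algebraic identity for coroots that survives reduction mod $p$, and then to propagate it through the additive norm map $\Nm_N$ and the character $\chi$. Let $\alpha, \beta \in \Phi_\chi$ with $\gamma := \alpha+\beta \in \Phi$; the goal is $\gamma \in \Phi_\chi$.

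First I would derive the coroot identity in $X_*(T) \otimes \BQ$. Using the standard relation $\delta = \tfrac{(\delta,\delta)}{2}\delta^\vee$, valid for every $\delta \in \Phi$, one immediately gets
\[
\gamma^\vee = c_\alpha\, \alpha^\vee + c_\beta\, \beta^\vee, \qquad c_\alpha := \frac{(\alpha,\alpha)}{(\gamma,\gamma)}, \quad c_\beta := \frac{(\beta,\beta)}{(\gamma,\gamma)}.
\]
Because $\alpha+\beta \in \Phi$ forces $(\alpha,\beta) < 0$, the roots $\alpha, \beta, \gamma$ lie in a single irreducible component of $\Phi$, so $c_\alpha, c_\beta$ are ratios of squared root lengths in an irreducible crystallographic root system and hence belong to $\{1, 2, 1/2, 3, 1/3\}$; the denominator $2$ can occur only in components of type $B, C, F$, and $3$ only in $G_2$, while types $A, D, E$ contribute only $1$.

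Next I would invoke the hypothesis to clear denominators. If $p > 3$ the denominators $2, 3$ are trivially invertible. If $p$ is not a torsion prime for $G$, then by the standard catalogue of torsion primes one has $p \neq 2$ whenever $\Phi$ admits a component of type $B, C, D, F$, and $p \neq 3$ whenever $\Phi$ admits a $G_2$ component; in either case $c_\alpha, c_\beta \in \BZ_{(p)}$. Picking $b \in \BZ$ coprime to $p$ with $b c_\alpha, b c_\beta \in \BZ$, the identity
\[
b\gamma^\vee = (bc_\alpha)\,\alpha^\vee + (bc_\beta)\,\beta^\vee
\]
holds in $X_*(T)$.

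To conclude, fix $x \in \BF_{q^N}$. Base-changing the integral identity to $V = X_*(T) \otimes \overline\BF_q$ and dividing by $b \in \BF_p^\times$ (valid as $b$ is a $p$-unit) yields $\gamma^\vee \otimes x = \alpha^\vee \otimes (\bar c_\alpha x) + \beta^\vee \otimes (\bar c_\beta x)$, where $\bar c_\alpha, \bar c_\beta \in \BF_p \subseteq \BF_{q^N}$ are the reductions of $c_\alpha, c_\beta$. Since $F$ satisfies $F(cv) = c^q F(v)$ on $V$, the map $\Nm_N = 1 + F + \cdots + F^{N-1}$ is $\BF_q$-linear, hence additive, and applying $\chi \circ \Nm_N$ together with $\alpha, \beta \in \Phi_\chi$ gives
\[
\chi\bigl(\Nm_N(\gamma^\vee \otimes x)\bigr) = \chi\bigl(\Nm_N(\alpha^\vee \otimes \bar c_\alpha x)\bigr)\cdot \chi\bigl(\Nm_N(\beta^\vee \otimes \bar c_\beta x)\bigr) = 1,
\]
so $\gamma \in \Phi_\chi$. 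The only substantive step is the second one: matching the denominators appearing in the coroot formula with the torsion primes of $G$. This is essentially the well-known correspondence between bad primes and non-simply-laced behaviour, but should be cross-checked against the paper's precise definition of ``torsion prime'' (for reductive $G$ it usually also incorporates primes dividing $|\pi_1(G_{\rm der})|$, which are irrelevant to the root-length argument but only strengthen the hypothesis).
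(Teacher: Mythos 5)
Your argument is correct and follows essentially the same route as the paper's: write $\gamma^\vee$ as a rational combination $c_\alpha\alpha^\vee + c_\beta\beta^\vee$ with $c_\alpha,c_\beta$ ratios of squared root lengths, invoke Bourbaki to put these ratios in $\{1,2^{\pm1},3^{\pm1}\}$, use the torsion-prime hypothesis to guarantee the common denominator is coprime to $p$, and then push the identity through $\chi\circ\Nm_N$ by additivity. The only cosmetic difference is bookkeeping: the paper keeps the integer multiplier $m$ and works with the characters $\chi_{m\gamma}$, $\chi_{mn_{\alpha,\gamma}\alpha}$, $\chi_{mn_{\beta,\gamma}\beta}$ and cancels $m$ at the end using invertibility in $\BF_q$, while you reduce the cleared integral identity mod $p$ first and divide by $b$ inside $\BF_p$ before applying $\chi\circ\Nm_N$; this is a presentation choice, not a different idea.
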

\begin{proof}
% By assumption on $G$, $V = \BZ\Phi^\vee \otimes \BF_{q^N}$.
For $m \in \BZ$ and $\alpha \in \Phi$, put
\[ \chi_{m\alpha} = \chi \circ \Nm_N \circ (\alpha^\vee \otimes \BF_{q^N}) \circ (m\cdot) \colon \BF_{q^N} \stackrel{m}{\to} \BF_{q^N} \to V^{F^N} \to V^F \to \Lambda^\times,\]
where the first map is multiplication by $m$. For $\a \in \Phi$, we have $\alpha^\vee = \frac{2}{(\a,\a)}$, where $(\a,\a) = |\a|^2$ is the square of the length of $\a$. For $\a, \b \in \Phi$, write $n_{\a,\b} = \frac{(\a,\a)}{(\b,\b)}$. By \cite[Chap.VI, \S4, Proposition 12(i)]{Bourbaki_68}, the numbers $n_{\a,\b}$ can only take the values $1,2^{\pm 1}, 3^{\pm 1}$ and by inspection one checks that the values $2^{\pm 1}$ resp. $3^{\pm 1}$ can only appear if $p$ is a torsion prime for $G$. Suppose now that $\alpha,\beta \in \Phi_\chi$, that is $\chi_\a, \chi_\a$ are trivial. Suppose that $\gamma = \alpha + \beta \in \Phi$. Thus $\gamma^\vee = \frac{2}{(\gamma,\gamma)} \gamma = n_{\a,\gamma} \alpha^\vee + n_{\b,\gamma}\beta^\vee$. Let $m \in \BZ_{\geq 1}$ be the smallest positive integer such that $mn_{\a,\gamma}$ and $mn_{\b,\gamma}$ lie in $\BZ$. Then we have $m\gamma^\vee = mn_{\a,\gamma} \a^\vee + mn_{\b,\gamma}\b^\vee$. Note that we have $\chi_{m\gamma} = \chi_{mn_{\a,\gamma} \a} \cdot \chi_{mn_{\b,\gamma} \b}$ as characters of $\BF_{q^N}$. As $\chi_{\a},\chi_\b$ are trivial, also $\chi_{mn_{\a,\gamma} \a}$ and $\chi_{mn_{\b,\gamma} \b}$ are trivial. Thus also $\chi_{m\gamma}$ is trivial and as $m$ invertible in $\BF_q$ by assumption, it follows that $\chi_{\gamma} = 1$, that is $\gamma \in \Phi_\chi$.
\end{proof}

Let $M = M_\chi$ be the subgroup generated by $T$ and $U_\a$ for $\a \in \Phi_\chi$. By Lemma \ref{lm:closed_addition}, $M$ is reductive with root system $\Phi_M=\Phi_\chi$. First we note that \Cref{prop:BW_modular} implies the following generalization of \cite[Proposition 5.16]{IvanovNie_24} for arbitrary $\Lambda \in \{L,\CO_L,\lambda\}$.

\begin{corollary}\label{computation}
     Let $\a \in \Phi \setminus \Phi_M$ and let $\k: \BG_a \to V$ be the map given by $x \mapsto \a^\vee \otimes x$ for $x \in \overline \BF_q$.

     (1) $\k^* \CL_{\chi_{h+}^h}$ is nontrivial and hence $H_c^i(\BG_a, \k^* \CL_{\chi_{h+}^h}) = 0$ for $i \in \BZ$;

     (2) if $N$ is even and $F^{N/2}(\a) = -\a$, then \begin{align*} \dim H_c^i(\BG_a, \tau^* \CL_{\chi_{h+}^h}) = \begin{cases} q^{N/2} &\text{ if } i = 1; \\ 0, &\text{otherwise,} \end{cases} \end{align*} where $\tau: \BG_a \to V$ is given by $x \mapsto \a^\vee \otimes x^{q^{N/2} + 1}$. Moreover, in this case $F^N$ acts on $H_c^1(\BG_a, \tau^* \CL_{\chi_{h+}^h})$ via $-q^{N/2}$. 
\end{corollary}
\begin{proof}
The proof of \cite[Proposition 5.16]{IvanovNie_24}(1) generalizes to arbitrary $\Lambda$ and \cite[Proposition 5.16]{IvanovNie_24}(2) follows from \Cref{prop:BW_modular}.
\end{proof}

Let $\tPhi_M \subseteq \widetilde \Phi$ be the set of affine roots of $M$. Consider
\[D = (\Phi_\aff^+ \cap \Phi_h^+) \setminus \tPhi_M = \{f \in \Phi_\aff^+ \setminus \tPhi_M; f < h\}.\]
As $\tPhi_M$ is $F$-stable, $D$ is a union of $F$-orbits in $\widetilde\Phi$. Similar as in \cite[\S5.6]{IvanovNie_24}, we can number the $F$-orbits of $D$ as
\[
\CO_1, \dots, \CO_{m-1}, \CO_m = \CO_m^\flat, \dots, \CO_n = \CO_n^\flat, \CO_{m-1}^\flat, \dots, \CO_1^\flat
\]
where $\CO^\flat = \{h-f \colon f \in \CO\}$, and such that
\[\CO_1(\bx) \le \cdots \le \CO_{m-1}(\bx) \le \frac{h}{2} = \CO_m(\bx) = \cdots = \CO_n(\bx) = \frac{h}{2} \le \CO_{m-1}^\flat(\bx) \le \cdots \le \CO_1^\flat(\bx),\]
$\CO_i < \CO_i^\flat$ for $1 \le i \le m-1$ and $\CO_{m-1}^\flat < \cdots < \CO_1^\flat$. Define $N_i = \#\CO_i$.

Set $D_i^\flat = \bigcup_{j=1}^i \CO_j^\flat$ for $1 \le i \le m-1$, and $D_m^\flat = \bigcup_{j=1}^n \CO_j^\flat$. Define
\[A_i = \tPhi^+ \setminus \bigcup_{j=1}^{i-1} \CO_j, \quad B_i = \tPhi^h \cup D_i^\flat, \quad C_{i-1} = B_{i-1} \setminus \{h\}.\] Moreover, we set $A_0 = A_1 = \tPhi^+$, $B_0 = \tPhi^h$ and $C_0 = B_0 \setminus \{h\}$. Note that $A_m = B_m \cup \tPhi_M^+$, where $\tPhi_M^+ = \tPhi_M \cap \tPhi^+$.

Let $g \in \BG_r^+$, $x \in \BA[r]$ and $E \subseteq \tPhi_r^+$. As in \cite[\S5.6]{IvanovNie_24} we set $g_E = \pr_E(g) \in u(\BA_E)$, $x_E = p_E(x) \in \BA_E$ and $\hat x = u(x) \in \BG_r^+$. For $f \in \tPhi_r^+$ we will set $x_f = x_{\{f\}}$ and $x_{\ge f} = x_{\tPhi^f}$. We can define $g_f$ and $g_{\ge f} \in \BG_r^+$ in a similar way. We identify $g_f \in u(\BA_f)$ with $u\i(g_f) \in \BA_f$ according to the context.

Note that \cite[Lemmas 5.12, 5.13 and 5.14]{IvanovNie_24} hold in our more general setup without any change and with literally the same proofs. (Note that the proof of \cite[Lemmas 5.12]{IvanovNie_24} uses the following property of $\Phi_M \subseteq \Phi$: if $\alpha \in \Phi_M$, $\beta \in \Phi \sm \Phi_M$, then $\alpha+\beta \not\in \Phi_M$. This holds when $\Phi_M \subseteq \Phi_G$ is a Levi subsystem, which follows from $p$ not being a torsion prime for $G$ by \cite[Lemma 3.6.1]{Kaletha_19}. This is guaranteed by the assumptions in \Cref{thm:coho_formulation}.)

We now generalize \cite[Proposition 5.15]{IvanovNie_24}. Set $\pi = \pi_h^{\tPhi^+: \tPhi^h}: \BG_h^+ = \BG_r^+/\BG_r^h \to \BA_h \cong V$.

\begin{proposition} \label{prop:factor}
    Let $1 \le i \le m$. Then there is an isomorphism \[\psi_i: X^{A_i}_h \cong X^{A_i}_{B_i} \times \BA_{D_i^\flat \cap -\tD^+}.\] Moreover, for $(\hat x, y) \in X^{A_i}_{B_i} \times \BA_{D_i^\flat \cap -\tD^+}$ with $x \in \BA_{A_i \setminus B_i}$ we have

    (1) Assume that $1 \le i \le m-1$, fix some $f \in \CO_i \cap -\tD^+$ and let $f^\flat = h-f \in \CO_i^\flat$. With notation of \eqref{eq:sequence_of_sign_changes}, we have \[\pi(\psi_i\i(\hat x, y)) = \sum_{j=0}^{b(f)-1} \a_{F^{a_{2j}}(f)}^\vee \otimes (x_{F^{a_{2j+1}}(f)}^{q^{a_{2j+1} - a_{2j-1}}} - x_{F^{a_{2j-1}}(f)}) y_{F^{2j}(f^\flat)}^{q^{a_{2j}-a_{2j-1}}} + \pi(\psi_i\i(\hat x, 0)) \in V.\]

    (2) Assume that $i=m$. For each $m \leq k \leq n$ fix some $f_k \in \CO_k \cap -\tD^+$. Then $\pi(\psi_m\i(\hat x, y))$ equals $ \pi(\psi_m\i(\hat x, 0))$ plus the sum over $m \leq k \leq n$ of the following term corresponding to $f=f_k$ (where $a_j = a_j(f_k)$ and $b = b(f_k)$ are as in \eqref{eq:sequence_of_sign_changes}):
    \begin{align*} - \alpha^\vee_{f} \otimes y_{f} \cdot y_{F^{a_{b-1}}(f)}^{q^{a_b - a_{b-1}}} + \sum_{j=1}^{(b-1)/2} \a_{F^{a_{2j}}(f)}^\vee \otimes (y_{F^{a_{2j}}(f)} - y_{F^{a_{2j-2}}(f)}^{q^{a_{2j} - a_{2j-2}}})y_{F^{a_{2j-1+b}}(f)}^{q^{a_{2j+b} - a_{2j-1+b}}} .\end{align*}
%    Assume that $i=m$, for each $m \leq k \leq n$ fix some $f_k \in \CO_k \cap \widetilde \Pi^+$, and let $a_{k,j}$ denote the corresponding integers as in \eqref{eq:sequence_of_sign_changes}. Then
 %   \begin{align*}\pi(\psi_i\i(\hat x, y)) = - \sum_{k=m}^n \alpha^\vee_{f_k} \otimes y_{f_k} \cdot y_{F^{a_{k,b(f_k)}}(f_k)}^{q^{a_{k,b(f_k)} - a_{k,b(f_k)-1}}}  \sum_{j=1}^{(b(f_k)-1)/2} \a_{F^{a_{k,2j}}(f_k)}^\vee \otimes (y_{F^{a_{k,2j+1}}(f_k)}^{q^{...}} - y_{F^{a_{k,2j-1}}(f_k)})y_{F^{a_{k,2j}(f_k)}} + \dots \\  + \pi(\psi_i\i(\hat x, 0)).\end{align*}
\end{proposition}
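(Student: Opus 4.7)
The plan is to prove Proposition \ref{prop:factor} by induction on $i$, following the strategy of \cite[Proposition 5.15]{IvanovNie_24} with the necessary adaptations for the present convex setting. The base case $i = 0$ is tautological, since $D_0^\flat = \varnothing$. For the inductive step $(i-1) \to i$ with $1 \leq i \leq m-1$, I would apply Proposition \ref{prop:cartesian}(1) to the triple $(A_i, B_{i-1}, B_i = B_{i-1} \cup \CO_i^\flat)$ with $f$ a distinguished element of $\CO_i^\flat \cap -\tD^+$. The required closure conditions $A_i + \CO_i^\flat \subseteq B_i$ hold by level considerations: since $\CO_i^\flat$ sits at level strictly between $h/2$ and $h$, any sum with a root in $A_i$ either lands in $\tPhi^h$ or remains in $B_i$ by construction. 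This yields an isomorphism $X^{A_i}_{B_{i-1}} \cong X^{A_i}_{B_i} \times \BA_{\CO_i^\flat \cap -\tD^+}$, which, combined with the inductive hypothesis on $\psi_{i-1}$, produces $\psi_i$. For $i = m$, all orbits $\CO_m^\flat, \ldots, \CO_n^\flat$ lie at exactly level $h/2$ and pairwise commute modulo $\BG^h$, so they may be peeled off simultaneously by the same argument.

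To compute the formula for $\pi \circ \psi_i^{-1}$, I would invoke \cite[Lemmas 5.12--5.14]{IvanovNie_24}, which transfer verbatim to our setting once $\Phi_M = \Phi_\chi$ is known to be a Levi subsystem of $\Phi$ --- guaranteed by Lemma \ref{lm:closed_addition} together with the assumption that $p$ is not a torsion prime for $G$, via \cite[Lemma 3.6.1]{Kaletha_19}. The projection $\pi \colon \BG_r^+/\BG_r^h \to V$ picks up only commutators of the form $[U_\alpha(a), U_{-\alpha}(b)]$ whose combined affine depth equals $h$, contributing $\alpha^\vee \otimes ab$ to $V$. In part (1), these pairings match elements of $\CO_i$ (captured in $\psi_i^{-1}(\hat x, 0)$) with their $\flat$-reflections in $\CO_i^\flat$ (captured by $y$); the Frobenius-twist exponents $q^{a_{2j+1} - a_{2j-1}}$ arise from reorderings needed to write products along the chosen total order on $\tPhi^+$. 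In part (2), distinct orbits $\CO_k \neq \CO_{k'}$ at level $h/2$ satisfy $\CO_{k'} \neq \CO_k^\flat = \CO_k$, so they cannot contribute cross-terms; the formula therefore decouples into a sum over single orbits.

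The main obstacle is part (2), which demands careful bookkeeping within each self-reflecting orbit $\CO_k = \CO_k^\flat$. Concretely, one must enumerate all pairings $y_{F^a(f)} \cdot y_{F^{a'}(f)}$ with $\alpha_{F^a(f)} = -\alpha_{F^{a'}(f)}$ and total affine depth $h$, then reassemble the result according to the sign-change sequence $0 = a_0 < a_1 < \cdots < a_{2b(f)}$ of $f$. The leading term $-\alpha_f^\vee \otimes y_f \cdot y_{F^{a_{b-1}}(f)}^{q^{a_b - a_{b-1}}}$ arises from pairing $f$ with its $\flat$-partner $F^{a_{b-1}}(f)$; the inner sum captures the intermediate pairings, with Frobenius exponents dictated by the sign-change pattern. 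Convexity enters through Proposition \ref{uniformization}, which ensures that the Lang-type uniformization underlying $\psi_i$ is indeed a well-defined isomorphism rather than merely a surjection.
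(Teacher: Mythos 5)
Your outline correctly identifies the mechanism the paper intends: iterate Proposition~\ref{prop:cartesian}(1) to peel off the orbits $\CO_1^\flat,\dots,\CO_i^\flat$ one at a time, and then compute the $\pi$-value using the transferred \cite[Lemmas~5.12--5.14]{IvanovNie_24}, whose validity in the present setting you correctly tie to $\Phi_M=\Phi_\chi$ being a Levi subsystem (Lemma~\ref{lm:closed_addition} plus $p$ not a torsion prime). The paper does not spell out a proof of Proposition~\ref{prop:factor} -- it only flags that it ``generalizes'' the corresponding statement of \cite{IvanovNie_24} -- and your inductive peeling via Proposition~\ref{prop:cartesian} is exactly the intended route.

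Two corrections. First, and more substantively: your final paragraph attributes the well-definedness of $\psi_i$ to convexity via Proposition~\ref{uniformization}. This is not right. Proposition~\ref{prop:factor} sits in the proof of Theorem~\ref{thm:generalization_first_article}(1)--(2), which hold under the hypothesis that $p$ is not a torsion prime but without any convexity assumption; convexity only enters Theorem~\ref{thm:generalization_first_article}(3) through the Steinberg cross-section. The isomorphism underlying $\psi_i$ is produced by Proposition~\ref{prop:cartesian}(1), whose inverse is written out explicitly using the sign-change sequence and is checked by Lemma~\ref{lm:5_4} (a direct Lang-type computation), not by Proposition~\ref{uniformization}; the latter is invoked only in \S\ref{sec:fibers_over_Y0} and \S\ref{sec:comp_with_CS_variety}, never in \S\ref{sec:prounipotent}. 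Second, a minor imprecision: when applying Proposition~\ref{prop:cartesian}(1) to strip off $\CO_i^\flat$, you should be careful about which representative of $\CO_i^\flat$ plays the role of the $f$ in that proposition (the statement there reads $f\in\D_\aff^+$ while Lemma~\ref{lm:5_4} is phrased for $f\in-\D_\aff^+$, and $\CO_i^\flat$ meets both $\tD^+$ and $-\tD^+$). In the final formula of Proposition~\ref{prop:factor}(1) the paper fixes $f\in\CO_i\cap-\tD^+$ and uses $f^\flat=h-f\in\CO_i^\flat\cap\tD^+$; your phrase ``a distinguished element of $\CO_i^\flat\cap-\tD^+$'' does not match this and should be rechecked against the precise convention in Proposition~\ref{prop:cartesian}(1).
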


Note that the formulas in Proposition \ref{prop:factor} are similar to those in \cite[proof of Lemma 6.10]{Nie_24}.
We deduce the generalization of \cite[Proposition 5.17]{IvanovNie_24} from this.

\begin{proposition} \label{key}
    Write $X^M_h = X_h \cap \BM_h^+$, and let $\pi_M$ be the restriction of $\pi$ to $\BM_h^+$. The following statements hold:

         (1) $H_c^j(X^{A_i}_h, \pi^* \CL_{\chi_{h+}^h}) \cong H_c^j(X^{A_{i+1}}_h, \pi^* \CL_{\chi_{h+}^h})^{\oplus q^{M_i}}$ for $1 \le i \le m-1$ and some $M_i \in \BZ_{\geq 1}$;

         (2) $H_c^j(X^{A_m}_h, \pi^* \CL_{\chi_{h+}^h}) \cong H_c^{j-n-m+1}(X^M_h, \pi_M^* \CL_{\chi_{h+}^h})^{\oplus q^{C_1}} ((-1)^{C_2}q^{C_3})$,
         where $C_1 = \sum_{i=m}^n q^{\#O_i/2}$ and $C_2 = \sum_{i=m}^n N/\#O_i$ and $C_3 = q^{N(n+m-1)/2}$.
\end{proposition}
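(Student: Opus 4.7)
The plan is to follow the strategy used to prove \cite[Proposition 5.17]{IvanovNie_24}, replacing its key geometric input \cite[Proposition 5.15]{IvanovNie_24} by the more general \Cref{prop:factor}. The additional complexity compared with \cite{IvanovNie_24} is that the sequence of sign changes \eqref{eq:sequence_of_sign_changes} now has general length $2b(f)$ rather than $2$, so the explicit formulas in \Cref{prop:factor} involve a nontrivial sum over $j$; all vanishing and Gauss-sum arguments, however, can still be carried out orbit-by-orbit in essentially the same way.

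For part (1), I will apply the isomorphism $\psi_i \colon X^{A_i}_h \cong X^{A_i}_{B_i} \times \BA_{D_i^\flat \cap -\tD^+}$ from \Cref{prop:factor}(1) and pull back $\pi^\ast\CL_{\chi_{h+}^h}$ along it. The resulting formula for $\pi \circ \psi_i^{-1}$ is affine-linear in the fiber coordinates $y_{F^{a_{2j}}(f^\flat)}$, with coefficients lying in coroot subgroups $\alpha^\vee \otimes \overline\BF_q$ with $\alpha \in \CO_i \cap \Phi$. Since by construction $\CO_i \cap \tPhi_M = \varnothing$ and $\Phi_M = \Phi_\chi$ (using that $p$ is not a torsion prime for $G$, cf. \Cref{lm:closed_addition}), the restriction of $\chi \circ \Nm_N$ to $\alpha^\vee \otimes \BF_{q^N}$ is nontrivial. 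Integrating along each $y_{F^{a_{2j}}(f^\flat)}$ via the vanishing argument of \Cref{criterion} then shows that $R\Gamma_c(X^{A_i}_h, \pi^\ast\CL_{\chi_{h+}^h})$ is supported on the closed subvariety of $X^{A_i}_{B_i}$ cut out by the equations $x_{F^{a_{2j+1}}(f)}^{q^{a_{2j+1}-a_{2j-1}}} = x_{F^{a_{2j-1}}(f)}$ for $f \in \CO_i \cap -\tD^+$ and all relevant $j$. Counting $\BF_q$-points of the resulting Artin--Schreier system produces the multiplicity $q^{M_i}$, and the remaining variables identify with those parameterizing $X^{A_{i+1}}_h$, yielding the stated isomorphism.

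For part (2), I will apply the analogous decomposition from \Cref{prop:factor}(2). Here $\pi \circ \psi_m^{-1}$ contains genuinely \emph{quadratic} terms in $y$, namely a non-degenerate quadratic form $-\alpha^\vee_{f_k} \otimes y_{f_k} y_{F^{a_{b-1}}(f_k)}^{q^{a_b - a_{b-1}}}$ for each orbit $\CO_k$ with $m \le k \le n$, plus lower-order cross terms of similar shape. The contribution of each such orbit to the cohomology is a Gauss-sum computation (the orbit analogue of $H_c^\ast(\BA^2, (xy)^\ast\CL_\chi) \cong \overline\BQ_\ell(-1)[-2]$ for a nontrivial multiplicative character $\chi$), yielding a scalar of the form $(-1)^{N/\#\CO_k}q^{N/2}$ for the action of $F^N$ together with a degree shift and a multiplicity contribution of the correct size. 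Assembling these contributions over all $\CO_m,\ldots,\CO_n$ yields the stated multiplicity $q^{C_1}$, cohomological shift $n+m-1$, and Frobenius scalar $(-1)^{C_2}q^{C_3}$; the remaining variables identify with those parameterizing $X^M_h$.

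The main obstacle will be the bookkeeping of the multiple sign changes $a_0 < a_1 < \cdots < a_{2b(f)}$ together with their associated Frobenius twists $q^{a_{2j+1}-a_{2j-1}}$, and the verification that the lower-order cross terms appearing in \Cref{prop:factor}(2) (which are absent in the Coxeter case) do not spoil the Gauss-sum analysis. This should follow from the triangular structure of the quadratic form in the $y$-variables visible from \Cref{prop:factor}(2), which pairs $y_{f_k}$ with $y_{F^{a_{b-1}}(f_k)}$ within a single orbit $\CO_k$ and allows one to decouple the Gauss-sum computation orbit-by-orbit after a triangular change of variables.
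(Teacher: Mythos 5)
Your proposal follows essentially the same route as the paper's proof: both reduce to the argument of \cite[Proposition 5.17]{IvanovNie_24}, substituting \Cref{prop:factor} for \cite[Proposition 5.15]{IvanovNie_24}, using \Cref{criterion}-type vanishing on fibers over $X^{A_i}_{B_i}$ together with Artin--Schreier point-counting for part (1), and an orbit-by-orbit Gauss-sum / eigenvalue computation for part (2). The only cosmetic difference is that the paper cites \cite[Proposition 5.16(2)]{IvanovNie_24} directly for the scalar $-q^{\#O_i/2}$ under $F^{\#O_i}$, whereas you rederive it as a Gauss sum, and the paper likewise leaves the ``triangular decoupling'' of the cross terms implicit in its reference to \emph{loc.~cit.}
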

Note that in general $C_2 \neq C_3$, in contrast to the special case of \emph{loc.~cit.}
\begin{proof}
We can proceed exactly as in \cite[proof of Proposition 5.17]{IvanovNie_24}, by noting that by Proposition \ref{prop:factor} the local system $\pi^\ast\CL_{\chi_{h+}^h}$ is trivial on a fiber over $\hat x \in X_{B_i}^{A_i}$ if and only if $x_{F^{a_{2j+1}}(f)}^{q^{a_{2j+1} - a_{2j-1}}} - x_{F^{a_{2j-1}}(f)} = 0$ for each $j$, which shows that for $0\leq i \leq m-1$,
\[
H_c^a(X_h^{A_i},\pi^\ast\CL) \cong H_c^a(Y_h^{A_{i+1}},\pi^\ast\CL)^{\oplus q^{M_i}}
\]
for some $M_i \in \BZ_{\geq 1}$, and similarly for part (2). In course of proving (2), when repeating the computation from \emph{loc.~cit.}, we obtain
\begin{equation}\label{eq:coho_of_YhAm}
H^j_c(X_h^{A_m},\pi^\ast\CL) \cong \otimes_{i=m}^n H_c^1(\BA_{f_i},\tau_i^\ast\CL) \otimes \otimes_{i=1}^{m-1}H_c^2(\BA_{f_i^\flat},\Lambda) \otimes H_c^{j-n-m+1}(X_h^M,\pi^\ast\CL).
\end{equation}
Whereas for $i\leq m-1$, $F^N$ acts on each $H_c^2(\BA_{f_i^\flat},\Lambda)$ by $q^N$ as in \emph{loc.~cit.}, we have that $F^{\#O_i}$ acts on $H_c^1(\BA_{f_i},\tau_i^\ast\CL)$ by $-q^{\#O_i/2}$ (by \cite[Proposition 5.16(2)]{IvanovNie_24}). Thus $F^N$ acts on this space by $(-q^{\#O_i/2})^{N/\#O_i} = (-1)^{N/\#O_i}q^{N/2}$. Altogether, we see that \eqref{eq:coho_of_YhAm} equals
\[
H_c^{j-n-m+1}(X_h^M,\pi^\ast\CL)^{\oplus\sum_{i=m}^n q^{\#O_i/2}}((-1)^{\sum_{i=m}^n N/\#O_i} q^{N(n+m-1)/2}),
\]
where the number of summands again follows from \cite[Proposition 5.16(2)]{IvanovNie_24}.
\end{proof}

\subsection{Proof of \Cref{thm:coho_formulation}}\label{sec:proof_coho_thm}
We may assume by \S\ref{sec:sssc_case} that $G$ is semisimple and simply connected. Then \cite[Lemma 2.2]{IvanovNie_24} guarantees that $M_\chi \neq G$, where $M_\chi$ is as in the beginning of \S\ref{sec:jumps}. Then exactly the same induction procedure as in \cite[\S5.7]{IvanovNie_24}, exploiting the results of \S\ref{sec:jumps}, finishes the proof when $\Lambda \in \{L,\lambda\}$. 
%and shows that the cohomology is concentrated in one degree (with the claimed Frobenius action) if $\Lambda = \CO_L$. 

Suppose now that $\Lambda = \CO_L$. Let $\bar\chi \colon (\BT_r^+)^F \to \lambda^\times$, $\chi[\ell^{-1}] \colon (\BT_r^+)^F \to L^\times$ denote the characters induced by $\chi$. As $\ker(\CO_L^\times \to \lambda^\times) = \mu_{\ell^\infty}(L)$ intersects the image of $\chi$ trivially, the sequence of jumps and of the root subsystems $\Phi_{\bar\chi}$ and $\Phi_{\chi[\ell^{-1}]}$ attached to $\bar\chi$ and $\chi[\ell^{-1}]$ at the beginning of \S\ref{sec:jumps}, coincide (with other words, the twisted Levi subgroups and the sequences of depths, attached to $\bar\chi$ and $\chi[\ell^{-1}]$ through the Howe factorizations, coincide). It follows that the induction procedures in the proof of \Cref{thm:coho_formulation} in the two cases $\Lambda = L$ and $\Lambda = \lambda$ run exactly parallel to each other, producing the same non-vanishing degree $s_{\chi,r}$ and cohomology groups of the same dimension. Thus $s_{\bar\chi,r} = s_{\chi[\ell^{-1}],r}$ and $\dim_\lambda H^{s_{\bar\chi,r}}(X,\lambda)[\bar\chi] = \dim_\lambda H^{s_{\chi[\ell^{-1}],r}}(X,L)[\chi[\ell^{-1}]]$. Now we can conclude by \Cref{lm:integral_coefficients}(2) and (3).

%It remains to show the freeness of the cohomology in the case $\Lambda = \CO_L$. By composing with $\CO_L^\times \to L^\times$ resp. $\CO_L^\times \to \lambda^\times$, $\chi$ induces the characters $\bar\chi$ and $\chi[\ell^{-1}]$. As $\ker(\CO_L^\times \to \lambda^\times) = \mu_{\ell^\infty}(L)$ intersects trivially the image of $\chi$, all three root subsystems $\Phi_{\chi}$, $\Phi_{\bar\chi}$ and $\Phi_{\chi[\ell^{-1}]}$ attached to $\bar\chi$, $\chi[\ell^{-1}]$ at the beginning of \S\ref{sec:jumps}, coincide. It follows that $s_{\chi} = s_{\bar\chi} = s_{\chi[\ell^{-1}]}$. Note that the order of $(\BT_r^+)^F$ is invertible in $\CO_L$. Thus by \cite{Rickard_95}, $R\Gamma_c(X_r,\CO_L)$ is represented by a perfect complex of $\CO_L (\BT_r^+)^F$-modules. Hence $R\Gamma_c(X_r,\CO_L)[\chi]$ is represented by a perfect complex $C$ of $\CO_L$-modules. As in  the universal coefficient theorem we have for each $i \in \BZ$ the short exact sequence
%\begin{equation*}\label{eq:universal_coeff}
%0 \to H^i(C)\otimes_{\CO_L} \lambda \to H^i(C \otimes_{\CO_L} \lambda) \to {\rm Tor}_1^{\CO_L}(H^{i+1}(C),\lambda) \to 0.
%\end{equation*}
%Suppose that $H^{i+1}(C)$ has non-zero $\CO_L$-torsion; in particular $H^{i+1}(C) \neq 0$, and hence $i+1=s_\chi$. But then $H^i(C \otimes_{\CO_L} \lambda) = H^i(R\Gamma_c(Y_r,\CO_L)[\chi] \otimes_{\CO_L}^L \lambda) = H^i_c(X_r,\lambda)[\bar \chi]$ is non-zero. Thus $i=s_{\bar \chi}$. This conradicts $s_{\chi} = s_{\bar \chi}$. This finishes the proof of the theorem.

\bibliography{bib_ADLV}{}
\bibliographystyle{amsalpha}

\end{document}